\definecolor{mygray}{gray}{0.85}
\renewcommand{\leq}{\leqslant}
\renewcommand{\geq}{\geqslant}
\def\subsection{\@startsection{subsection}{3}%
  \z@{.5\linespacing\@plus.7\linespacing}{.3\linespacing}%
  {\bfseries\centering}}
\def\subsubsection{\@startsection{subsubsection}{3}%
  \z@{.5\linespacing\@plus.7\linespacing}{.3\linespacing}%
  {\centering}}
\def\myfnt{\ifx\protect\@typeset@protect\expandafter\footnote\else\expandafter\@gobble\fi}
\newtheorem{theorem}{Theorem}[section]
\newtheorem{corollary}[theorem]{Corollary}
\newtheorem{definition}[theorem]{Definition}
\newtheorem{lemma}[theorem]{Lemma}
\newtheorem{proposition}[theorem]{Proposition}
\newtheorem{fact}[theorem]{Fact}
\newtheorem{remark}[theorem]{Remark}
\newtheorem{convention}[theorem]{Convention}
\newcounter{claimcounter}
\numberwithin{claimcounter}{theorem}
\newcommand{\mrm}[1]{\mathrm{#1}}
\newcounter{todocounter}
\begin{document}
\keywords{Coxeter groups, profinite rigidity, QFA, hyperbolic groups, homogeneity, prime models}
\begin{abstract} 
We prove that the irreducible affine Coxeter groups are first-order rigid and deduce from this that they are profinitely rigid in the absolute sense. 
%
%
We then show that the first-order theory of any irreducible affine Coxeter group does not have a prime model. Finally, we prove that universal Coxeter groups of finite rank are homogeneous, and that the same applies to every hyperbolic (in the sense of Gromov) one-ended right-angled Coxeter group.
%
%
\end{abstract}

\title[Profinite rigidity of affine Coxeter groups]{Profinite rigidity of affine Coxeter groups}


\thanks{The authors were supported by a ``Grant for Internationalization'' from the University of Torino which made possible a visit of the second author to the first author in the Spring of 2023. The first author was also supported by project PRIN 2022 ``Models, sets and classifications", prot. 2022TECZJA}

\author{Gianluca Paolini}

\address{Department of Mathematics ``Giuseppe Peano'', University of Torino, Via Carlo Alberto 10, 10123, Italy.}
\email{gianluca.paolini@unito.it}

\author{Rizos Sklinos}

\address{Institute of Mathematics, Chinese Academy of Sciences, No.55 of Zhongguancun East Road, Beijing 100190, P. R. China.}

\email{rizos.sklinos@amss.ac.cn}

\date{\today}
\maketitle




\section{Introduction}
A central question in the model theory of groups is how much information the first-order theory of a group carries about the group and whether this information determines the group up to isomorphism. The main focus for us is on finitely generated groups (f.g. groups). Along this line of thought various notions of rigidity have been introduced. A f.g. $G$ is called {\em first-order rigid} if $G\cong H$ whenever $H$ is f.g. and elementarily equivalent to $G$, i.e., $G$ and $H$ have the same first-order theory. The term ``first-order rigidity" was introduced in \cite{ALM19}. The notion it defines coincides with the pre-existing notion of quasi-axiomatizability. In this paper we will use the term first-order rigid, since we believe that it describes the notion just introduced better. A f.g. group $G$ is called {\em quasi-finitely axiomatizable} (or QFA, for short) if there exists a first-order sentence $\sigma$ satisfied by $G$ such that $G\cong H$ whenever $H$ is f.g. and satisfies $\sigma$. A rigidity notion independent of first-order logic is the following: a f.g. residually finite group $G$ is called {\em profinitely rigid in the absolute sense} if $G\cong H$ whenever $H$ is f.g., residually finite and $\widehat{G}\cong\widehat{H}$, where, for any group $\Gamma$, $\widehat{\Gamma}$ denotes its profinite completion. Since in this paper we will not relativize profinite rigidity, we will just use the term without the ``in the absolute sense" qualification. The introduction of this notion is motivated by the general question of determination of what the family of finite images of a group can tell us about the group. In the case the group $G$ is assumed to be residually finite, the information on the family of isomorphism types of its finite images is captured by the profinite completion of $G$. Hence, it is straightforward that a f.g. residually finite group is profinitely rigid if it is determined (among the class of f.g. residually finite groups) by its finite quotients. Recently, profinite rigidity received considerable attention, as a major advancement was made in the field. In \cite{BMRS20} the first example of a full-sized profinitely rigid group was given (a group is full-sized if it contains a nonabelian free subgroup). 

\smallskip

It is not hard to show that any f.g. abelian group is first-order rigid, but only the finite ones are QFA. As a matter of fact, no infinite f.g. abelian-by-finite group is QFA \cite{Oger06}. Torsion-free f.g. nilpotent groups of class $2$ are first-order rigid, but not all of class $3$ are (see \cite{Hirshon77} and \cite{Oger91}). On the other hand, finitely generated nonabelian free groups are not first-order rigid, as a consequence of the positive answer to Tarski's question, i.e., any two nonabelian free groups have the same first-order theory \cite{Sela06, KM06}. The same is true of any nonabelian torsion-free hyperbolic group \cite{Sela09}. The interested reader can find more information on how the rigidity landscape looked like, up to 2007, in the excellent survey \cite{Nies07}. In recent years Avni-Lubotzky-Meiri  proved that irreducible non-uniform higher-rank characteristic zero arithmetic lattices (e.g. $\mrm{SL}_n(\mathbb Z)$ for $n\geq 3$) are first-order rigid \cite{ALM19}, this result rekindled the interest in first-order rigidity. In the aforementioned paper, the authors isolate some sufficient conditions ensuring first-order rigidity, and one of these properties is primality. A group $G$ is prime if it embeds elementarily in any group $H$ elementarily equivalent to it, i.e., $G$ embeds in $H$ in such a way that any first-order property true of a finite tuple from $G$ is preserved by the embedding. Primality of a f.g. group $G$ is actually equivalent to the following property: for any finite tuple from $G$ its orbit under $Aut(G)$ is $\emptyset$-definable.  The appearance of primality is not a coincidence, this notion has strong ties with rigidity. In particular, it was proved in \cite{OS06} that a f.g. nilpotent group is QFA if and only if it is prime. The existence of a QFA group which is not prime is a well-known open problem (see e.g. \cite{Nies07}). On the other hand, a counting argument shows that there are prime f.g. groups which are not QFA \cite{NS09}. If we relaxe the condition of $\emptyset$-definability of orbits of tuples to $\emptyset$-type definability, i.e., definability by possibly infinitely many $\emptyset$-formulas, then we get the notion of homogeneity. Examples of homogeneous groups are: free groups \cite{OH11, PS12}, rigid torsion-free hyperbolic groups \cite{PS12}, i.e., torsion-free hyperbolic groups that do not split as an amalgamated free product or as an HNN extension over a cyclic group, and free abelian groups (folklore). One might argue that in a sense:
$$\text{ first-order rigidity : homogeneity = QFAness : primality.}$$              

We now change the focus of our attention to profinite rigidity. Again, it is not hard to see that f.g. abelian groups have this property. On the other hand, there is a virtually cycic group that is not profinitely rigid \cite{Baumslag74}. In addition, the worst behaviour can already be exhibited by groups belonging to the class of metabelian groups \cite{Pickel74}. Finding a profinitely rigid group that is full-sized, has been proved a hard task, the first examples were discovered only recently in \cite{BMRS20}, as mentioned above. The driving force, along this line of thought, has been the following major open question, asked by Remeslennikov in \cite[Question 5.48]{MK10}: is a nonabelian free group profinitely rigid?

\smallskip

In this paper we are interested in rigidity, primality and homogeneity phenomena in Coxeter groups. This is a very well-studied class of groups with interesting properties that can be described combinatorially, algebraically and also geometrically. The study of Coxeter groups through the lens of model theory has been started in \cite{MPS22}. We restrict our attention to two almost disjoint families of Coxeter groups: affine Coxeter groups and hyperbolic right-angled Coxeter groups (at the intersection of these two families there is only the infinite dihedral group). We review the basic definitions concerning Coxeter groups in Section \ref{prel_sec} (see also \cite[Section~2]{MPS22}).

\smallskip
	
We first focus on affine Coxeter groups. We will restrict to the analysis of irreducible affine Coxeter groups. The geometric realization of an irreducible affine Coxeter group $W$ gives rise to a semidirect product decomposition of $W$ as $T \rtimes_\alpha W_0$, where $T$ is the so-called translation subgroup of $W$ and $W_0$ is an irreducible finite Coxeter group, so such groups are evidently abelian-by-finite. In particular, as mentioned above, they are not QFA. The main theorem of this paper is: 

\begin{theorem}\label{the_rigidity_theorem}
Any irreducible affine Coxeter groups is first-order rigid.
\end{theorem}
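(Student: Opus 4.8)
The plan is to reconstruct, from $\mathrm{Th}(W)$ alone, enough of the semidirect decomposition $W=T\rtimes_\alpha W_0$ — the lattice $T$, the point group $W_0$, the representation $\alpha$, and the splitting of the extension — to pin down $W$ up to isomorphism among finitely generated groups. Fix a finitely generated $H$ with $H\equiv W$ and put $n=\mathrm{rank}(T)$.

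\emph{Step 1: the translation subgroup is definable.} The group $W_0$ acts on $T\otimes\mathbb{Q}$ by the reflection representation of the finite irreducible Coxeter group it is, which is irreducible and non-trivial, hence has no non-zero fixed vector; it follows that for $w\in W$ one has $w\in T$ if and only if $[W:C_W(w)]\leq|W_0|$ (if $w\in T$ then $T\leq C_W(w)$, and if $w=tw_0$ with $w_0\neq 1$ then $C_W(w)\cap T=\mathrm{Fix}_T(w_0)$ has infinite index in $T$, forcing $C_W(w)$ to have infinite index in $W$). Since ``$[G:C_G(x)]\leq k$'' is a first-order formula for each fixed $k$, the subgroup $T$ is $\emptyset$-definable in $W$; moreover the sentences asserting that the defining formula yields a normal abelian subgroup of index exactly $|W_0|$, that it is torsion-free, and that $[T:pT]=p^n$ for a fixed prime $p$, all hold in $W$, hence in $H$. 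Therefore the corresponding definable subgroup $T_H\trianglelefteq H$ is isomorphic to $\mathbb{Z}^n$ with $H/T_H$ finite of order $|W_0|$; being finite, $H/T_H$ has its isomorphism type determined by $\mathrm{Th}(W)$, so $H/T_H\cong W_0$.

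\emph{Step 2: the point-group action.} Conjugation induces $\alpha:W/T\to\mathrm{Aut}(T)$, and for each $\bar w$ and each $k\geq 1$ the subgroup $\ker(\alpha(\bar w)^k-1)\leq T$ is saturated, so its rank equals the $\mathbb{F}_2$-dimension of its image in $T/2T$ and is recoverable by first-order means; likewise each finite $(\mathbb{Z}/m)[W/T]$-module $T/mT$ is interpretable. Assembling this data into interpreted finite structures truncated at each level $M$, and using $H\equiv W$ together with the fact that the bijections $W/T\to H/T_H$ underlying these isomorphisms form a descending chain of non-empty subsets of a finite set, we obtain a single isomorphism $\phi:W/T\to H/T_H$ preserving all the ranks and all the modules $T/mT$. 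From the ranks one recovers, by Möbius inversion applied to the ranks of the $\ker(\alpha(\bar w)^k-1)$, the character of the $\mathbb{Q}[W_0]$-module $T\otimes\mathbb{Q}$; since every complex irreducible representation of a Weyl group is realizable over $\mathbb{Q}$ (classical), $\phi$ then induces a $\mathbb{Q}[W_0]$-isomorphism $T\otimes\mathbb{Q}\cong T_H\otimes\mathbb{Q}$. Hence $T$ and $T_H$ are $W_0$-invariant lattices in the reflection representation — of which there are, up to isomorphism, only finitely many (Jordan–Zassenhaus) — with isomorphic reductions modulo every integer. The crux of the argument is to deduce $T_H\cong T$ as $\mathbb{Z}[W_0]$-modules, i.e.\ that the finitely many relevant lattices (among which are the distinct translation lattices of irreducible affine Coxeter groups sharing the same point group, such as those of $\tilde{B}_n$ and $\tilde{C}_n$, as well as lattices like the weight lattice that underlie no affine Coxeter group) lie in pairwise distinct genera; this is where the classification of irreducible affine Coxeter groups and an analysis of the reduction of the reflection representation modulo the primes dividing the order of the fundamental group are brought to bear.

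\emph{Step 3: splitting and conclusion.} The given decomposition $W=T\rtimes_\alpha W_0$ exhibits a finite subgroup of $W$ of order $|W_0|$, so ``there is a finite subgroup of order at least $|W_0|$'' belongs to $\mathrm{Th}(W)$ and therefore holds in $H$; since $T_H$ is torsion-free, any finite subgroup of $H$ embeds into $H/T_H$, so such a subgroup is a complement and $H=T_H\rtimes_\beta W_0$. Granting Step 2, choose a $\mathbb{Z}[W_0]$-module isomorphism $\psi:T\to T_H$ intertwining $\alpha$ with $\beta\circ\phi$; then $(t,w)\mapsto(\psi(t),\phi(w))$ defines an isomorphism $W=T\rtimes_\alpha W_0\xrightarrow{\sim}T_H\rtimes_\beta W_0=H$, establishing the theorem. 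The main obstacle is Step 2 — extracting the precise integral $\mathbb{Z}[W_0]$-module structure of the translation lattice from the first-order-accessible finite reductions.
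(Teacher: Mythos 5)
Your Steps 1 and 3 are sound and run parallel to the paper's own argument: the paper also makes the translation subgroup $\emptyset$-definable (Proposition~\ref{the_prop_def_transla}, with the formula $\forall y\,[x,y^{m}]=1$ rather than your centralizer-index formula, but your formula works for the reason you give), recovers from $\mathrm{Th}(W)$ that any f.g.\ $H\equiv W$ has the form $\mathbb{Z}^n\rtimes_\beta W_0$ with $\beta$ injective (Proposition~\ref{rigidity_prop}), and reduces everything to showing that the two translation lattices are isomorphic as $\mathbb{Z}[W_0]$-lattices, using exactly the observation that the finite quotients $W/mT$ are first-order accessible, so the lattices lie in the same genus.

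The genuine gap is the step you yourself flag as ``the crux'': passing from \emph{same genus} to \emph{isomorphic}. You do not prove it; you only gesture at showing that ``the finitely many relevant lattices lie in pairwise distinct genera'' via the affine classification and reductions modulo primes dividing the order of the fundamental group. As stated this plan is both unexecuted and insufficient: what must be ruled out is \emph{any} $W_0$-invariant lattice in the same genus as $T$ but not isomorphic to it, and the Jordan--Zassenhaus finiteness gives you no control over that set --- it is not exhausted by the root/coroot/weight lattices visible in the classification, and in general a genus of $\mathbb{Z}[W_0]$-lattices can contain several isomorphism classes, so ``same rational representation plus same reductions mod all $m$'' does not formally yield an integral isomorphism. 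The paper closes exactly this hole with two concrete inputs you would need to replace: Plesken's theorem that lattices in the same genus admit a $\mathbb{Z}[W_0]$-embedding $\phi\colon L_2\to L_1$ of index coprime to any prescribed integer (Fact~\ref{plesken_fact}, \cite[Theorem~4.1.8]{plesken}), and Maxwell's classification of the normal subgroups of an irreducible affine Coxeter group contained in $T$ as multiples $\ell N_i$ of finitely many sublattices $N_1,\dots,N_k$ (Fact~\ref{maxwell_normal_fact}, \cite[Proposition~7.2]{maxwell_normal}); choosing the coprimality bound to be $\prod_i [T:N_i]$ forces $\phi(L_2)=\ell L_1$, whence $L_2\cong L_1$ and $H\cong W$ (Theorem~\ref{suff_cond_rigidity}). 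Without an argument of this kind (or an explicit proof that the genus of the root lattice consists of a single isomorphism class for each affine type), your proof is incomplete at its decisive step.
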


The work of Oger on elementary equivalence of abelian-by-finite groups \cite{Oger88} provides an interesting link to profinite rigidity. Indeed, Oger proved that two f.g. abelian-by-finite groups are elementarily equivalent if and only if they have isomorphic profinite completions. Using this logic machinery together with our previous theorem and some extra arguments, we obtain the following result:

\begin{theorem}\label{the_profinite_theorem}
Any irreducible affine Coxeter groups is profinitely rigid. 
\end{theorem}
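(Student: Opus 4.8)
The plan is to deduce Theorem~\ref{the_profinite_theorem} from Theorem~\ref{the_rigidity_theorem} using Oger's criterion \cite{Oger88}, which states that two finitely generated abelian-by-finite groups are elementarily equivalent if and only if they have isomorphic profinite completions. Fix an irreducible affine Coxeter group $W$ and suppose $H$ is a finitely generated residually finite group with $\widehat{W} \cong \widehat{H}$. The immediate subtlety is that Oger's theorem applies to abelian-by-finite groups on \emph{both} sides, whereas a priori $H$ is merely finitely generated and residually finite; so the first step will be to argue that being abelian-by-finite is detected by the profinite completion within the class of finitely generated residually finite groups, or else to bypass this by other means. Since $W$ is virtually abelian, $\widehat{W}$ is virtually (a product of copies of $\widehat{\mathbb{Z}}$), i.e. virtually $\widehat{\mathbb{Z}}^{\,n}$ where $n$ is the rank of the translation lattice $T$; in particular $\widehat{W}$ has a finite-index subgroup isomorphic to $\widehat{\mathbb{Z}}^{\,n}$, and this subgroup corresponds under $\widehat{H}\cong\widehat{W}$ to the closure of a finite-index subgroup $H_0 \leq H$. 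One then shows $H_0$ is finitely generated, residually finite, and has profinite completion $\widehat{\mathbb{Z}}^{\,n}$; a standard fact (the only finitely generated residually finite groups with profinite completion $\widehat{\mathbb{Z}}^{\,n}$ are $\mathbb{Z}^{\,n}$ itself, because the abelianization and the pro-$p$ quotients pin down the structure, and any such group is nilpotent hence polycyclic) forces $H_0 \cong \mathbb{Z}^{\,n}$. Hence $H$ is virtually $\mathbb{Z}^{\,n}$, so in particular abelian-by-finite.

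Once both $W$ and $H$ are known to be finitely generated abelian-by-finite with $\widehat{W}\cong\widehat{H}$, Oger's theorem gives $W \equiv H$, i.e. $W$ and $H$ are elementarily equivalent. Now invoke Theorem~\ref{the_rigidity_theorem}: $W$ is first-order rigid, so any finitely generated group elementarily equivalent to it is isomorphic to it. Since $H$ is finitely generated, we conclude $H \cong W$, which is exactly profinite rigidity of $W$.

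I would organize the write-up as: (1) recall Oger's equivalence as a cited black box; (2) the lemma identifying finite-index $\mathbb{Z}^{\,n}$ inside $H$ from the profinite data, establishing that $H$ is abelian-by-finite — this is where I expect the main obstacle to lie, since one must be careful that finite-index subgroups of $H$ correspond to finite-index (necessarily open, by finiteness of index and a Nikolov--Segal type argument, though here everything is virtually abelian so it is elementary) subgroups of $\widehat{H}$, and that profinite completions commute with passing to finite-index subgroups in the right way; (3) apply Oger to get $W\equiv H$; (4) apply Theorem~\ref{the_rigidity_theorem} to get $W\cong H$. The phrase ``and some extra arguments'' in the excerpt presumably refers precisely to step (2), the reduction showing $H$ is abelian-by-finite so that Oger's criterion becomes applicable; everything else is a short chain of implications.

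A mild alternative for step (2), if one prefers to avoid case analysis on the structure of $H$, is to note that $\widehat{W}$ determines the isomorphism type of the maximal abelian finite-index-normal situation abstractly and to use that a finitely generated residually finite group whose profinite completion is virtually $\widehat{\mathbb{Z}}^{\,n}$ must be virtually $\mathbb{Z}^{\,n}$ because such a group has polynomial subgroup growth forcing it to be virtually nilpotent, and a virtually nilpotent group with that profinite completion is virtually $\mathbb{Z}^{\,n}$. Either route lands at the same place, and from there Theorem~\ref{the_rigidity_theorem} closes the argument.
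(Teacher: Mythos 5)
Your overall architecture is correct and matches the paper's: reduce to Oger's criterion, then close with first-order rigidity (Theorem~\ref{the_rigidity_theorem}); you also correctly identified that the only real content beyond this chain is showing that $H$ is abelian-by-finite, so that Oger's theorem applies to both sides. Where you diverge is in how that step is carried out, and the paper's route is considerably lighter than yours. The paper (Fact~\ref{Grune}, from Grunewald--Zalesskii) computes $\widehat{W}\cong\widehat{\mathbb{Z}^n}\rtimes_{\hat\alpha}W_0$, which is visibly abelian-by-finite, and then simply observes that a residually finite $H$ embeds into $\widehat{H}\cong\widehat{W}$; since a subgroup of an abelian-by-finite group is abelian-by-finite, $H$ is abelian-by-finite and Oger applies (this is Theorem~\ref{conclusion_equiv_semi}, packaged as Corollary~\ref{RigProf}). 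By contrast, you reconstruct a finite-index subgroup $H_0\leq H$ with $\widehat{H_0}\cong\widehat{\mathbb{Z}}^{\,n}$ and then pin down $H_0\cong\mathbb{Z}^n$, invoking either profinite rigidity of $\mathbb{Z}^n$ or polynomial subgroup growth; this works (the cleanest justification of your ``standard fact'' is that a finitely generated residually finite group all of whose finite quotients are abelian is abelian, after which a finitely generated abelian group with profinite completion $\widehat{\mathbb{Z}}^{\,n}$ must be $\mathbb{Z}^n$), but your stated justification via ``nilpotent hence polycyclic'' and pro-$p$ quotients is looser than it needs to be, and the Nikolov--Segal-flavoured worry about openness is unnecessary since the relevant finite-index subgroup of $\widehat{H}$ is closed, hence open. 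Moreover, for Oger you only need $H$ abelian-by-finite, not the precise identification $H_0\cong\mathbb{Z}^n$ (which the paper recovers afterwards, from elementary equivalence, via Proposition~\ref{rigidity_prop}). So: same skeleton, but your step (2) does more work than required, while the paper's embedding-into-the-completion trick disposes of it in one line.
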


To the best of our knowledge this is the first time that a first-order rigidity result has been used to obtain profinite rigidity. Furthermore, this partially answers Question 1.3 from \cite{varghese}. In that paper it is shown that irreducible affine Coxeter groups are pairwise profinitely distinguishable and it is asked if Coxeter groups are profinitely rigid. Even more interestingly, our Theorem~\ref{the_profinite_theorem} connects with the topic of mathematical crystallography. In fact, faithful split extensions of free abelian groups by finite groups are examples of crystallographic groups and the question of profinite rigidity of crystallographic groups has already been considered in the literature, most notably in \cite{crystallo_profinite}, where it is shown using a computer algebra program that any crystallographic group of dimension at most four is in fact profinitely rigid. Furthermore, as mentioned in \cite{crystallo_profinite}, it is known that there exists a faithful split extension of $\mathbb{Z}^{22}$ by the group $\mathbb{Z}_{23}$ which is {\em not} profinitely rigid. The problem of profinite rigidity of such groups is highly not trivial as it connects with the theory of genera from integral representation theory (cf. \cite[Chapter~15]{representation_theory_book}). All this makes our Theorem~\ref{the_profinite_theorem} even more relevant, as on one hand it is the first example of a profinite rigidity result for a class of crystallographic groups without bound on dimensions, and on the other it relies essentially on methods of logic and model theory.

\smallskip
\noindent

\smallskip

Focusing on the existence of prime models we prove the following negative result.



\begin{theorem}  
Let $W \cong \mathbb{Z}^n \rtimes_\alpha W_0$, with $W_0$ finite and  $\alpha:W_0\rightarrow GL_n(\mathbb{Z})$ injective. Then $\mrm{Th}(W)$ does not have a prime model.  
\end{theorem}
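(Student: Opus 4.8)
The plan is to show that the theory $\mrm{Th}(W)$ of $W \cong \mathbb{Z}^n \rtimes_\alpha W_0$ has no prime model by producing, for an arbitrary candidate prime model, a type over $\emptyset$ that it omits. Recall that a countable model of a theory is prime if and only if it is atomic (realizes only principal/isolated types over $\emptyset$), so it suffices to exhibit a complete $1$-type (or $n$-type) $p$ over $\emptyset$ that is consistent with $\mrm{Th}(W)$ but is \emph{not} isolated. Since $W$ itself is a model, and $W$ is not the prime model (indeed we will show there is no prime model), the natural strategy is to understand what isolated types look like in $\mrm{Th}(W)$ and then find a non-isolated one.

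\smallskip

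\textbf{First}, I would analyze the structure of $W$ and its elementary substructures. The translation subgroup $T \cong \mathbb{Z}^n$ is $\emptyset$-definable (e.g., as a suitable finite-index characteristic subgroup, or via the commutator/centralizer structure; the faithfulness of $\alpha$ is what makes this work cleanly), so in any model $M \equiv W$ there is a definable analogue $T(M)$, which will be an abelian group elementarily equivalent to $\mathbb{Z}^n$ as a module over the group ring $\mathbb{Z}[W_0]$ (the conjugation action of $W_0$ on $T$ is $\emptyset$-definable after we name, or define, coset representatives of $W_0$). Now $\mathrm{Th}(\mathbb{Z}^n)$ in the pure language of abelian groups is well understood: its models are of the form $\mathbb{Z}^n \oplus D$ with $D$ a divisible torsion-free group, and crucially the type of an element $t$ is governed by the ``divisibility predicates'' — for each prime $q$ and each $k$, whether $q^k \mid t$. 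In a model of $\mathrm{Th}(\mathbb{Z})$ there are elements divisible by $q^k$ for every $k$ simultaneously only in the non-standard models; such an element realizes a non-principal type.

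\smallskip

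\textbf{The key step} is then: consider the $1$-type $p(x)$ over $\emptyset$ (in the language of $W$) asserting that $x \in T(M)$, that $x \neq 0$, and that $x$ is divisible by $n_k$ for a sequence $n_k$ with $n_k \mid n_{k+1}$ and infinitely many prime divisors accumulating (equivalently, $x$ lies in the intersection $\bigcap_k n_k T(M)$). One checks this type is consistent with $\mrm{Th}(W)$ — it is realized in a suitable elementary extension $M \succ W$, because $T(M)$ can be taken to contain a copy of $\mathbb{Q}$ or at least elements with arbitrarily high divisibility; the semidirect product structure is inherited, so such an $M$ is a genuine model of the theory. Then I would argue $p$ is not isolated: any formula $\varphi(x)$ consistent with $\mrm{Th}(W)$ and implying ``$x \in T$, $x \neq 0$'' must, by quantifier-elimination-type analysis for the module $T$ (Szmielew invariants / the structure theory of $\mathbb{Z}[W_0]$-modules, or more elementarily by reducing to the pure abelian group $\mathbb{Z}^n$), be satisfiable by some element of $T(W) = \mathbb{Z}^n$ itself, hence by an element \emph{not} divisible by $n_k$ for large $k$ — so $\varphi$ does not isolate $p$. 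Therefore no model realizing $p$ is atomic; but we must also rule out the model $W$ (and its elementary substructures) being prime — and $W$ omits $p$, so if $W$ or a substructure of it were prime it would be atomic, contradicting... wait — that's the wrong direction. The correct conclusion: a prime model must be atomic, hence must \emph{omit} $p$; that is automatic for $W$. So instead I must find a consistent non-isolated type that the prime model is \emph{forced} to realize — which cannot happen for a genuinely omittable type.

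\smallskip

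\textbf{So the real argument}, and the genuine obstacle, is this: I must show the theory is \emph{not atomic}, i.e., that it has no atomic model at all, which by the omitting types theorem is equivalent to showing that there is a formula $\varphi(\bar x)$ consistent with $\mrm{Th}(W)$ that \emph{contains no complete isolated type} — every complete type extending $\varphi$ is non-principal. The candidate is a formula saying ``$x \in T$, $x \neq 0$'': I would show that the isolated $1$-types concentrating on $T \setminus \{0\}$ are \emph{not dense} below this formula, because the divisibility behavior of a nonzero element of $\mathbb{Z}^n$ can always be ``pushed'' — given any consistent $\varphi(x) \vdash x \in T \setminus\{0\}$, there are two nonzero elements of some model with different divisibility types both satisfying $\varphi$, using that $\varphi$ can only constrain finitely many Szmielew-type conditions. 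Hence below $\varphi$ there is no isolated type, so the theory has no atomic model, so no prime model. \textbf{The main obstacle} will be the precise reduction of the module theory of $T$ as a $\mathbb{Z}[W_0]$-module to a statement about divisibility that a single formula cannot pin down — this requires either invoking the classification of $\aleph_0$-categorical-free abelian-by-finite theories à la Oger, or doing a hands-on analysis of positive-primitive formulas over $\mathbb{Z}[W_0]$ restricted to the lattice $\mathbb{Z}^n$ inside its $\mathbb{Q}[W_0]$-span, together with care about how the finite group $W_0$ and the non-$T$ part of $W$ interact with these formulas (coset representatives are only definable with parameters a priori, so one works with the $\emptyset$-definable equivalence relation ``$xy^{-1} \in T$'' and quotient).
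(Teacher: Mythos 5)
Your reduction of ``no prime model'' to ``there is a consistent formula below which the isolated types fail to be dense'' is correct, and the candidate formula ``$x\in T\wedge x\neq 1$'' (with $T$ the $\emptyset$-definable translation subgroup, cf.\ Proposition~\ref{the_prop_def_transla}) is the right one. The genuine gap is the key claim that no single formula of the group language can decide the full divisibility pattern of a nonzero element of $T$: you justify it by saying a formula ``can only constrain finitely many Szmielew-type conditions'', but that is a quantifier-elimination fact about the abelian group $\mathbb{Z}^n$ (or the $\mathbb{Z}[W_0]$-module $T$), whereas the formulas you must control have quantifiers ranging over all of $W$, and you yourself flag the reduction of arbitrary group formulas to module formulas as ``the main obstacle'' without carrying it out — neither the Oger-style route nor the pp-formula analysis is executed. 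That step is not a routine verification; as written, it is missing. (A side slip: your parenthetical classification ``models of $\mathrm{Th}(\mathbb{Z}^n)$ are $\mathbb{Z}^n\oplus D$ with $D$ divisible'' is false — $\bigoplus_{p}\mathbb{Z}_{(p)}$ is a model of $\mathrm{Th}(\mathbb{Z})$ not of this form, and if the classification were true then $\mathbb{Z}$ would be a prime model of its own theory, contradicting \cite{BBGK73}.)

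The paper fills exactly this hole without any analysis of formulas, by producing a concrete second model: the Ehrenfeucht--Fra\"iss\'e game argument of Theorem~\ref{the_game_theorem} shows that $G\equiv\mathbb{Z}$ implies $G^n\rtimes_\alpha W_0\equiv\mathbb{Z}^n\rtimes_\alpha W_0$, applied with $G=\bigoplus_{p}\mathbb{Z}_{(p)}$, so $H=(\bigoplus_{p}\mathbb{Z}_{(p)})^n\rtimes_\alpha W_0\models\mathrm{Th}(W)$. It then uses the defining property of a prime model directly rather than density of isolated types: a prime model $\mathcal{A}$ embeds elementarily in $W$, forcing $\varphi(\mathcal{A})\cong\mathbb{Z}^n$ (since $\mathbb{Z}^n$ has no proper elementary subgroup), and also embeds elementarily in $H$, which is impossible because $(1,\ldots,1)\in\mathbb{Z}^n$ is divisible by no prime while every nonzero element of $(\bigoplus_{p}\mathbb{Z}_{(p)})^n$ is divisible by infinitely many primes. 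Note that this same ingredient would also rescue your plan: an isolated complete type extending ``$x\in T$, $x\neq 1$'' would be realized in both $W$ and $H$, and since a complete type decides divisibility by each prime, its realizations in the two models would have identical divisibility patterns — finite in $W$, cofinite in $H$ — a contradiction. But absent Theorem~\ref{the_game_theorem} (or a genuine quantifier-type analysis of $\mathrm{Th}(W)$ relative to the module structure of $T$), your key step remains unproven.
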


In particular, the first-order theory of any irreducible affine Coxeter group does not have a prime model. Note that Oger in \cite{Oger06} proved that infinite Abelian-by-finite groups are not prime. On the other hand, in \cite{MPS22} it was shown that any irreducible non-affine $2$-spherical Coxeter group of finite rank is prime.

%

\smallskip 
\noindent Passing to hyperbolic right-angled Coxeter groups we prove the following:

\begin{theorem} 
Universal f.g. Coxeter groups are homogeneous.
\end{theorem}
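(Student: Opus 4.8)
The plan is to adapt to the present setting the techniques developed by Perin--Sklinos \cite{PS12} and Ould Houcine \cite{OH11} for proving homogeneity of free groups. Fix $n\geq 1$ and write $W=W_n=\langle s_1\rangle\ast\cdots\ast\langle s_n\rangle\cong(\mathbb{Z}/2\mathbb{Z})^{\ast n}$ for the universal Coxeter group of rank $n$; recall that $W$ is virtually free, hence word-hyperbolic, that every torsion element of $W$ is an involution conjugate to exactly one of the $s_i$, and that the displayed free product decomposition is the Grushko decomposition of $W$. Since automorphisms preserve types, the content of the statement is the converse: whenever $\bar a,\bar b$ are finite tuples from $W$ with $\mrm{tp}(\bar a)=\mrm{tp}(\bar b)$, there is $\sigma\in\mrm{Aut}(W)$ with $\sigma(\bar a)=\bar b$. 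The cases $n\leq 2$, where $W$ is $\mathbb{Z}/2\mathbb{Z}$ or the infinite dihedral group, are handled directly by inspecting the short list of orbits and of types, so I would assume $n\geq 3$.

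The first step is to extract homomorphisms in both directions from the hypothesis on types, exploiting that $W$ is finitely presented. Extend $\bar a$ to a generating tuple $(\bar a,\bar a')$ of $W$ and fix a finite presentation $W=\langle\bar a,\bar a'\mid\rho\rangle$; then the existential formula $\exists\bar y\,\rho(\bar x,\bar y)$ (a conjunction of finitely many equations) belongs to $\mrm{tp}(\bar a)$, witnessed by $\bar a'$. Evaluating it at $\bar b$ yields $\bar b'$ such that $\bar a\mapsto\bar b$, $\bar a'\mapsto\bar b'$ defines a homomorphism $f\colon W\to W$ with $f(\bar a)=\bar b$ and image $\langle\bar b,\bar b'\rangle$; symmetrically one obtains $g\colon W\to W$ with $g(\bar b)=\bar a$, so that $\pi:=g\circ f$ is an endomorphism of $W$ fixing $\bar a$ pointwise, and $f\circ g$ fixes $\bar b$. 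As $W$ is finitely generated and residually finite, it is Hopfian, so it would suffice to arrange that one of the homomorphisms produced along the way is surjective; the whole difficulty is to promote $f$ (or $\pi$) to a surjection.

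This promotion is the main step and the expected main obstacle, and I would carry it out through the relative JSJ theory of $W$. Consider the JSJ decomposition $\Lambda$ of $W$ over the family of finite subgroups relative to $\bar a$, i.e. the one in which $\langle\bar a\rangle$ is elliptic; it exists by the Guirardel--Levitt accessibility and JSJ machinery, since $W$ is finitely presented. Its edge groups are finite, its flexible vertex groups (if any) are of orbifold type, and its relative modular group $\mrm{Mod}(W;\bar a)$ is generated by partial conjugations along the edges and by Dehn twists supported in the flexible vertices, and is realized inside $\mrm{Aut}(W)$. If $\Lambda$ is rigid --- that is, $W$ admits no nontrivial free product decomposition and no essential splitting over a finite subgroup in which $\bar a$ is elliptic --- then the Rips--Sela shortening argument, applied to a test sequence of maps as in the previous paragraph and the resulting limiting action of $W$ on an $\mathbb{R}$-tree with finite arc stabilizers, forces (for a suitable choice) $f$ to be injective, hence an isomorphism by Hopfianity, which gives the desired $\sigma$. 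If $\Lambda$ is not rigid, I would show that every obstruction distinguishing the $\mrm{Aut}(W)$-orbit of $\bar a$ from that of $\bar b$ lies in $\mrm{Mod}(W;\bar a)$ and is detected by $\mrm{tp}(\bar a)$, so that after precomposing $f$ with a suitable element of $\mrm{Mod}(W;\bar a)$ one reduces to the rigid case. The genuinely delicate point is that \cite{PS12} and \cite{OH11} work with torsion-free groups: here one must instead run the JSJ theory, the description of modular automorphisms, and the shortening argument over the family of finite subgroups, and verify that the flexible pieces occurring for $W$ --- which have finite, typically trivial, edge groups --- are orbifold groups to which the shortening argument still applies. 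These ingredients are available in the literature on hyperbolic groups with torsion, so the argument should go through.

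Assembling the cases yields, for all tuples $\bar a,\bar b$ with $\mrm{tp}(\bar a)=\mrm{tp}(\bar b)$, an automorphism $\sigma$ of $W$ with $\sigma(\bar a)=\bar b$, which is exactly the homogeneity of $W$.
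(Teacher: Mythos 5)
Your proposal correctly extracts, from equality of types and finite presentability, endomorphisms $f,g$ of $W_n$ with $f(\bar a)=\bar b$, $g(\bar b)=\bar a$, so that $g\circ f$ fixes $\bar a$; this much agrees with the paper. But the entire content of the theorem lies in the step you label ``promotion,'' and that step is not carried out: you invoke a relative JSJ over finite subgroups, modular groups, test sequences and a shortening argument for hyperbolic groups with torsion, and in the non-rigid case you simply assert that ``every obstruction \dots lies in $\mathrm{Mod}(W;\bar a)$ and is detected by $\mathrm{tp}(\bar a)$'' --- which is precisely the hard part of \cite{PS12,OH11} and is given no argument here. Moreover the rigid case contains a concrete error: you conclude that $f$ is ``injective, hence an isomorphism by Hopfianity.'' Hopfianity converts surjectivity into bijectivity, not injectivity; $W_n$ is emphatically \emph{not} co-Hopfian (being an infinite free product of finite groups it has many injective, non-surjective endomorphisms, e.g.\ sending the generators to involutions generating a proper free-product subgroup), so injectivity of $f$ alone gives nothing. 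What one actually needs, and what the paper uses, is \emph{relative} co-Hopfianity (Andr\'e \cite{And22}, Moioli \cite{Moi13}): when $W_n$ is one-ended relative to $\langle\bar a\rangle$, the injective endomorphism $g\circ f$ fixing $\bar a$ is an automorphism --- and even for that you must first prove that $f$ and $g$ are injective, which your sketch leaves to an unexecuted shortening argument (note also that a single pair $f,g$ does not by itself provide the sequence of homomorphisms such an argument requires).

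For comparison, the paper's route avoids all of this machinery by building injectivity into the formula: it uses an $\exists\forall$-formula asserting that $\bar b$ is the image of the given word map evaluated at elements $x_1,\dots,x_n$ which are nontrivial involutions, pairwise non-conjugate. The resulting endomorphism $h$ with $h(\bar a)=\bar b$ then kills no factor and, by the Kurosh subgroup theorem plus a ping-pong lemma (Lemma~\ref{PingPong}), has image a free product of $n$ conjugates of distinct factors, hence isomorphic to $W_n$; Hopfianity (correctly applied, since $h$ is then a surjection onto an isomorphic copy) gives injectivity of $h$, and symmetrically of $f$. The case distinction is then exactly your rigid/non-rigid dichotomy, but handled elementarily: if $W_n$ is one-ended relative to $\bar a$, relative co-Hopfianity applied to $f\circ h$ finishes; otherwise Lemma~\ref{Splittings} lets one pass to the smallest free factor containing $\bar a$ (which is one-ended relative to $\bar a$), apply relative co-Hopfianity there, and extend the resulting isomorphism of free factors to an automorphism of $W_n$. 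If you want to salvage your plan, replace ``Hopfianity'' by relative co-Hopfianity applied to $g\circ f$, and supply the injectivity of $f,g$ either by an actual shortening argument with torsion or, much more simply, by strengthening the defining formula as the paper does.
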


Universal Coxeter groups are virtually free and f.g. virtually free groups are known to be almost homogeneous (see \cite{And22} for this result and the notion of almost homogeneity). At this point it is worth mentioning that it is not known whether there exists a non-homogeneous f.g. virtually free group. In \cite{Andr22} it is proved that co-Hopfian f.g. virtually free groups are homogeneous.   
On the same line of thought, as one-ended hyperbolic groups are co-Hopfian (see \cite{Moi13}),  we show: 

\begin{theorem}
Hyperbolic one-ended right-angled Coxeter groups are homogeneous.  
\end{theorem}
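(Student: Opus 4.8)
The plan is to follow the standard strategy for proving homogeneity of hyperbolic groups via the theory of JSJ decompositions and the $\text{Out}$-action, adapting the argument that Perin--Sklinos \cite{PS12} used for rigid torsion-free hyperbolic groups, but now accommodating both torsion and the non-trivial essential splittings that a one-ended right-angled Coxeter group $W = W_\Gamma$ typically carries. Recall that homogeneity means: whenever two tuples $\bar a, \bar b \in W^n$ have the same type over $\emptyset$, there is an automorphism of $W$ sending $\bar a$ to $\bar b$. The first reduction is to pass from ``same type'' to an elementary self-map: by a theorem going back to the solution of Tarski's problem and its generalizations to hyperbolic groups (Sela), two tuples realize the same type in a hyperbolic group precisely when there is an isomorphism $W \to W$ fixing a suitable set — more usefully, one shows there exist elementary embeddings $f\colon W \hookrightarrow W$ with $f(\bar a) = \bar b$ and $g \colon W \hookrightarrow W$ with $g(\bar b)=\bar a$. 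So the core problem becomes: \emph{an injective elementary (or even just injective and ``preserving enough'') self-map of a one-ended hyperbolic RACG is surjective}, i.e.\ a co-Hopf-type statement upgraded to respect the marked tuple.

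First I would set up the canonical JSJ decomposition of $W$ over finite (equivalently, over $2$-ended or virtually cyclic) subgroups — for a one-ended hyperbolic group this is the Bowditch/Dunwoody--Sageev canonical JSJ, which is invariant under all automorphisms of $W$, and for a right-angled Coxeter group its vertex groups and edge groups are themselves RACGs / special subgroups, which one can read off combinatorially from $\Gamma$ (separating cliques and the like). The key point of canonicity is that any elementary embedding $f\colon W \hookrightarrow W$ must respect this decomposition up to conjugacy: rigid vertex groups map into conjugates of rigid vertex groups, hanging (QH / orbifold) vertices map to hanging vertices, and edge groups to edge groups. Then I would run a co-Hopfianity argument piece by piece: on rigid vertices one uses that one-ended hyperbolic groups are co-Hopfian (Sela \cite{Moi13}) together with the fact that $f$ restricted there is again elementary hence injective with finite-index-controlled image, and on hanging surface/orbifold pieces one uses the classical fact that a $\pi_1$-injective self-map of a surface (orbifold) group that is ``non-degenerate'' on the boundary is an isomorphism. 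Assembling these local isomorphisms along the edge groups, using that $f$ is compatible with the graph of groups, yields that $f$ is onto, hence an automorphism; feeding the marked tuple through the same analysis gives the automorphism sending $\bar a \mapsto \bar b$.

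The mechanism that forces $f$ to be non-degenerate on the hanging pieces — rather than, say, collapsing a boundary curve or wrapping around it — is the place where one must use \emph{more} than injectivity, and this is exactly where elementarity (the shared type, via the preservation of definable families / the ``Merzlyakov-type'' and shortening-argument technology) enters: a modular-automorphism / shortening argument shows that any such degenerate configuration would let us strictly shorten $f$ in its conjugacy class, contradicting a suitable minimality choice, much as in \cite{PS12}. I would therefore organize the write-up as: (1) recall the canonical JSJ of a one-ended hyperbolic RACG and its automorphism-invariance; (2) reduce homogeneity to the existence of elementary self-embeddings and show these respect the JSJ; (3) a shortening/co-Hopf argument vertex-by-vertex to upgrade such a self-embedding to an automorphism; (4) carry the marked tuple along.

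The main obstacle I anticipate is step (3) in the presence of \emph{torsion} and of \emph{non-rigid (hanging) vertices with essential boundary}: the clean Perin--Sklinos argument is for torsion-free groups with no essential cyclic splittings, whereas a generic one-ended RACG has plenty of virtually-$\mathbb Z$ splittings and two-ended edge groups with torsion, so one needs the hyperbolic-with-torsion versions of the JSJ, of the shortening argument, and of co-Hopfianity, and must check these are available and compatible with elementary maps. A secondary subtlety is verifying that the relevant JSJ edge and vertex groups for a RACG are themselves of a form to which these tools apply (e.g.\ that hanging vertices are virtually surface groups and that the two-ended edge groups behave well), which should be extractable from the combinatorial structure of $\Gamma$. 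If the torsion issues prove too delicate in full generality, a fallback is to first treat the case where $\Gamma$ has no separating clique (so $W$ has no essential finite splitting and is its own single rigid JSJ vertex), where the argument reduces cleanly to co-Hopfianity plus elementarity, and then bootstrap.
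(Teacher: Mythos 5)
Your plan hinges on a step that is not available: the passage from $\mathrm{tp}^W(\bar a)=\mathrm{tp}^W(\bar b)$ to the existence of \emph{self}-embeddings (let alone elementary ones) $f,g\colon W\to W$ with $f(\bar a)=\bar b$ and $g(\bar b)=\bar a$. Equality of types in a fixed finitely generated model only gives maps into elementary extensions (saturated models, ultrapowers); producing endomorphisms of $W$ itself is exactly the hard content, and in the torsion-free setting it is obtained through Sela's machinery (limit groups, shortening, preretractions), which -- as you yourself flag -- is not established for hyperbolic groups with torsion such as right-angled Coxeter groups. Citing this as ``a theorem going back to the solution of Tarski's problem'' therefore leaves the central step unproved, and the subsequent JSJ-invariance and shortening arguments inherit the same problem. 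The paper avoids all of this: since $W$ comes with a finite Coxeter presentation, one writes $\bar a=\bar a(e_1,\dots,e_n)$ in the standard generators and considers the single formula $\phi(\bar u):=\exists x_1,\dots,x_n\,(\text{Coxeter relations for } \bar x \ \wedge\ \theta(\bar x)\ \wedge\ \bar u=\bar a(\bar x))$, where $\theta$ is the first-order ``injectivity certificate'' of Lemma 6.3 of \cite{MPS22} guaranteeing that $e_i\mapsto x_i$ defines an \emph{injective} endomorphism. Then $W\models\phi(\bar a)$ (witnessed by the standard generators), so $W\models\phi(\bar b)$ by equality of types, giving an injective endomorphism $h$ with $h(\bar a)=\bar b$, and symmetrically an injective $f$ with $f(\bar b)=\bar a$.

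Once these maps exist, your vertex-by-vertex JSJ analysis is also unnecessary: $f\circ h$ is an injective endomorphism of $W$ fixing $\bar a$, and since $W$ is one-ended (indeed one-ended relative to $\langle\bar a\rangle$) and hyperbolic, the (relative) co-Hopf property for hyperbolic groups with torsion (Moioli \cite{Moi13}, Andr\'e \cite{And22}) applies to $W$ as a whole and shows $f\circ h$ is an automorphism; hence $h$ is surjective and is the desired automorphism carrying $\bar a$ to $\bar b$. So the two genuine defects in your proposal are: (i) the unproved existence of (elementary) self-embeddings realizing the type equality, which the paper replaces by a concrete definable formula; and (ii) reliance on torsion-adapted JSJ/shortening technology that is neither available in the literature nor needed, since one application of relative co-Hopfianity to the whole group suffices.
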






%

A few words on the history of this paper. This paper was submitted to a journal on 19.12.2023 and it was posted on ArXiv on 01.07.2024, in concomitance with the posting of the paper \cite{paper_with_simon} (which uses some of the results of the present paper). After submission of the present paper on ArXiv we discovered that a paper was posted on ArXiv on 22.04.2024 (see \cite{other_paper}) announcing profinite rigidity of affine Coxeter groups.

\section{Preliminaries}\label{prel_sec}

\begin{definition}[Coxeter groups]\label{def_Coxeter_groups} Let $S$ be a set. A matrix $m: S \times S \rightarrow \{1, 2, . . . , \infty \}$ is called a {\em Coxeter matrix} if it satisfies:
	\begin{enumerate}[(1)]
	\item $m(s, s') = m(s' , s)$;
	\item $m(s, s') = 1 \Leftrightarrow s = s'$.
	\end{enumerate}
For such a matrix, let $S^2_{*} = \{(s, s') \in S^2 : m(s, s' ) < \infty \}$. A Coxeter matrix $m$ determines
a group $W$ with presentation:
$$
\begin{cases} \text{Generators:} \; \;  S \\
				\text{Relations:} \; \;   (ss')^{m(s,s')} = e, \text{ for all } (s, s' ) \in S^2_{*}.
\end{cases} $$
A group with a presentations as above is called a Coxeter group, and the pair $(W, S)$ is a called a Coxeter system. The rank of the Coxeter system $(W, S)$ is $|S|$.
\end{definition}

\begin{definition}\label{def_Coxeter_graph} In the context of Definition~\ref{def_Coxeter_groups}, the Coxeter matrix $m$ can equivalently be represented by a labeled graph $\Gamma$ whose node set is $S$ and whose set of edges $E_\Gamma$ is the set of pairs $\{s, s' \}$ such that $m(s, s') < \infty$, with label $m(s, s')$. Notice that some authors consider instead the graph $\Delta$ such that $s$ and $s'$ are adjacent iff $m(s, s ) \geq 3$. In order to try to avoid confusion we refer to the first graph as the Coxeter graph of $(W, S)$ (and usually denote it with the letter $\Gamma$), and to the second graph as the Coxeter diagram of $(W, S)$ (and usually denote it with the letter $\Delta$).
\end{definition}

	\begin{definition}[Right-angled Coxeter groups] Let $m$ be a Coxeter matrix and let $W$ be the corresponding Coxeter group. We say that $W$ is right-angled if the matrix $m$ has values in the set $\{ 1, 2, \infty\}$. In this case the Coxeter graph $\Gamma$ associated to $m$ is simply thought as a graph (instead of a labeled graph), with edges corresponding to the pairs $\{ s, s' \}$ such that $m(s, s') = 2$. By the {\em universal Coxeter group of rank $n$} we mean the Coxeter group whose associated Coxeter graph  is the graph with $n$ vertices and no edges, i.e., the free product of $n$ copies of the group $\mathbb{Z}/2\mathbb{Z}$.
	
\end{definition}

\begin{definition}\label{def_irreducible} Let $(W, S)$ be a Coxeter system with Coxeter diagram $\Delta$ (recall Definition~\ref{def_Coxeter_graph}). We say that $(W, S)$ is irreducible if $\Delta$ is connected.
\end{definition}

	\begin{remark} Let $(W, S)$ be a right-angled Coxeter system with Coxeter graph $\Gamma$ (recall Def.~\ref{def_Coxeter_graph}). Then $(W, S)$ is irreducible if and only if the complement of $\Gamma$ is connected.
\end{remark}

	\begin{fact} Let $(W, S)$  be a Coxeter system of finite rank. Then $W$ can be written uniquely as a product $W_1 \times \cdots \times W_n$ of irreducible special $S$-parabolic subgroups of $W$ (up to changing the order of the factors $W_i$, $i \in [1, n]$). In fact, if $S_1, ..., S_n$ are the connected components of the Coxeter diagram $\Delta$, then $W_i = \langle S_i \rangle_W$.
\end{fact}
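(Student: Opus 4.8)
The plan is to prove existence of the decomposition by a presentation comparison, to read off irreducibility of the factors from the definitions, and to obtain uniqueness from the faithfulness of standard parabolic subgroups. The single observation driving everything is this: if $s$ and $s'$ lie in \emph{different} connected components of $\Delta$, then there is no edge of $\Delta$ joining them, so by the definition of the Coxeter diagram $m(s,s') < 3$; since $s \neq s'$ this forces $m(s,s') = 2$, i.e. $(ss')^2 = e$, which together with $s^2 = s'^2 = e$ is equivalent to $ss' = s's$. Hence every generator in $S_i$ commutes with every generator in $S_j$ whenever $i \neq j$, and therefore the subgroups $W_i = \langle S_i\rangle_W$ commute elementwise.

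For existence I would consider the multiplication map $\phi\colon W_1 \times \cdots \times W_n \to W$, $(w_1,\dots,w_n) \mapsto w_1\cdots w_n$; it is a homomorphism precisely because distinct factors commute, and it is surjective since its image contains all of $S = S_1 \cup \cdots \cup S_n$. To see it is injective I would exhibit an inverse using von Dyck's theorem: send each $s \in S_i$ to the tuple of $W_1\times\cdots\times W_n$ whose $i$-th coordinate is $s$ and whose other coordinates are $e$. This assignment respects every defining relation $(ss')^{m(s,s')}=e$ of $W$ — within a single component the relation holds in the corresponding $W_i$, while across two components the images lie in different coordinates and $m(s,s')=2$ makes the relation reduce to the commuting relation — so it extends to a homomorphism $\psi\colon W \to W_1\times\cdots\times W_n$. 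Since $\phi$ and $\psi$ are mutually inverse on generators they are mutually inverse isomorphisms, giving $W = W_1 \times \cdots \times W_n$. Irreducibility of each factor is then immediate: the Coxeter diagram of $(W_i, S_i)$ is the subgraph of $\Delta$ induced on $S_i$, and since $S_i$ is a connected component this induced graph is connected, which is exactly irreducibility in the sense of Definition~\ref{def_irreducible}.

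For uniqueness, suppose $W = U_1 \times \cdots \times U_m$ internally with each $U_k = \langle T_k\rangle_W$ for some $T_k \subseteq S$ and each $(U_k,T_k)$ irreducible. As above, commuting of distinct factors forces $m(s,s')=2$, hence no $\Delta$-edge, for $s \in T_k$ and $s' \in T_l$ with $k \neq l$; directness forces the $T_k$ to be pairwise disjoint (a common generator would lie in $U_k \cap U_l = \{e\}$); and $\langle T_1 \cup \cdots \cup T_m\rangle = W$ forces $T_1 \cup \cdots \cup T_m = S$. Thus $\{T_1,\dots,T_m\}$ is a partition of $S$ into pieces each inducing a connected subgraph of $\Delta$ (by irreducibility) with no edges between distinct pieces, and such a partition must coincide with the partition of $\Delta$ into its connected components; hence $m = n$ and $\{T_k\} = \{S_i\}$. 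I expect the main obstacle to be exactly this uniqueness step, since it is the only place that uses genuine structure theory of Coxeter groups rather than formal manipulation: it rests on the standard theorem that $T \mapsto \langle T\rangle_W$ is injective on subsets of $S$ and that the Coxeter matrix of $\langle T\rangle_W$ is the restriction $m|_{T\times T}$ (so that proper subsets give proper subgroups and induced diagrams govern irreducibility). Existence and irreducibility, by contrast, are essentially formal consequences of the commuting observation.
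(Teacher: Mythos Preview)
The paper does not prove this statement: it is recorded as a \emph{Fact}, i.e.\ a standard result quoted from the literature (it can be found, for instance, in Humphreys' text \cite{humphreys}, which the paper cites). There is therefore no proof in the paper to compare against.

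Your argument is correct and is essentially the standard one. The existence half is purely formal once you note that generators in distinct components of $\Delta$ commute; your use of von Dyck's theorem to produce the inverse homomorphism is the cleanest way to see injectivity of the multiplication map. For uniqueness you have correctly isolated the two genuine inputs: (i) that $T \mapsto \langle T\rangle_W$ is injective on subsets of $S$, so that $\langle T_1 \cup \cdots \cup T_m\rangle = W$ forces $T_1 \cup \cdots \cup T_m = S$; and (ii) that the order of $ss'$ in $W$ is exactly $m(s,s')$, so that elementwise commuting of $U_k$ and $U_l$ forces $m(s,s') = 2$ for $s \in T_k$, $s' \in T_l$. Both are standard consequences of the parabolic subgroup theorem (equivalently, of the deletion condition or the faithfulness of the geometric representation), and with them in hand your reduction to ``the only partition of $\Delta$ into edge-disjoint connected pieces is the partition into connected components'' is exactly right.
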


	\begin{definition} Let $W$  be a Coxeter group. We say that $W$ is spherical if it is finite. We say that $W$ is affine if it is infinite and it has a representation as a discrete affine reflection group (see e.g. the classical reference \cite{humphreys} for details). 
\end{definition}

	\begin{fact} The irreducible affine Coxeter groups have been classified, a list of the corresponding Coxeter diagrams (recall Definition~\ref{def_Coxeter_graph}) can be found in Figure \ref{affine_figure}.	
	\begin{figure}
\begin{center}
\makebox[\textwidth][c]{\includegraphics[angle=270,origin=c, width=1.20\textwidth]{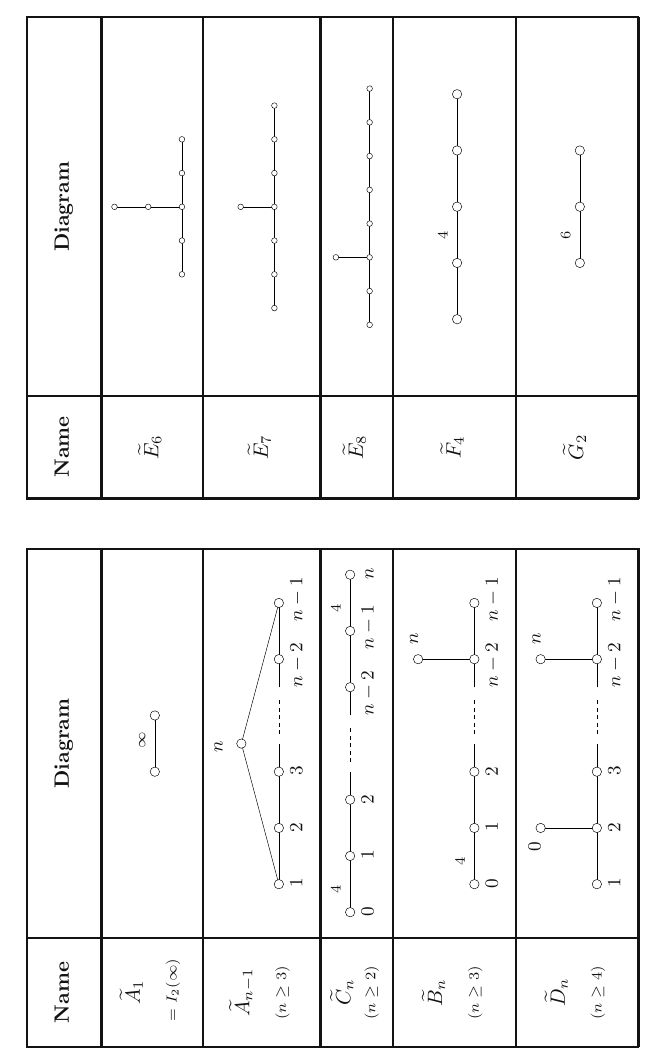}}\caption{The irreducible affine Coxeter groups}\label{affine_figure}
\end{center}
\end{figure}
\end{fact}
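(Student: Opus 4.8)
The statement is the classical Coxeter classification, so the plan is to reduce it to a question about a bilinear form and then run a combinatorial enumeration. To each Coxeter system $(W,S)$ of finite rank I would attach the symmetric \emph{canonical bilinear form} $B$ on $\mathbb{R}^{S}$ determined by $B(e_s,e_s)=1$ and $B(e_s,e_{s'})=-\cos(\pi/m(s,s'))$ for $s\neq s'$ (with $-\cos(\pi/m)=-1$ when $m=\infty$). The geometric realization as a discrete affine reflection group, recalled in \cite{humphreys}, identifies the affine systems: an irreducible $(W,S)$ is affine precisely when $B$ is positive semidefinite and singular, in which case its radical is one-dimensional. Thus the task becomes the purely linear-algebraic problem of enumerating all connected Coxeter diagrams whose form $B$ is positive semidefinite of corank exactly $1$.

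The first structural input I would establish is a \textbf{positivity lemma} via Perron--Frobenius. Writing $B=I-A$, the matrix $A$ is nonnegative with zero diagonal and off-diagonal support equal to the edge set of the Coxeter diagram $\Delta$; connectedness of $\Delta$, which is exactly irreducibility (Definition~\ref{def_irreducible}), makes $A$ irreducible in the Perron--Frobenius sense. Hence $B$ is positive semidefinite iff the spectral radius $\rho(A)\leq 1$ and singular iff $\rho(A)=1$, and in that case the top eigenvalue is simple with a strictly positive eigenvector $\delta=\sum_s c_s e_s$. This confirms the radical is one-dimensional and that the ``marks'' $c_s>0$ have full support. The relation $A\delta=\delta$ then yields the balance equation $\sum_{s'\neq s}\cos(\pi/m(s,s'))\,c_{s'}=c_s$ at every node, which tightly constrains the local geometry. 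The second input is the \textbf{minimality lemma}: since $\delta$ has full support, the coordinate subspace spanned by $S\setminus\{s\}$ meets the radical trivially, so the restriction of $B$ to it is nondegenerate, and being a restriction of a positive semidefinite form it is positive definite. Thus every proper subdiagram of an affine diagram is of finite (spherical) type, so affine diagrams are exactly the \emph{critical} connected diagrams: minimal among those that fail to be positive definite.

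With these tools in place I would run the enumeration. The minimality lemma lets me invoke (or prove in parallel, by the same method) the finite-type classification $A_n,B_n,C_n,D_n,E_6,E_7,E_8,F_4,G_2,H_3,H_4,I_2(m)$, and then ask which connected diagrams of corank $1$ have \emph{all} proper subdiagrams of finite type. Combining the balance equation with determinant and principal-minor estimates bounds the data: the surviving edge labels lie in $\{3,4,6\}$ (with the single exception $m=\infty$ in the rank-two diagram $\tilde{A}_1$); a label $\geq 4$ forces a path-like or very small diagram; no vertex can have degree $\geq 4$ outside the branch node of the $\tilde{D}_n$ and $\tilde{E}$ types; and any diagram containing a cycle must be an unlabeled cycle, giving $\tilde{A}_n$. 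Matching the finitely many survivors against the positive marks $c_s$ produces exactly the families $\tilde{A}_n,\tilde{B}_n,\tilde{C}_n,\tilde{D}_n,\tilde{E}_6,\tilde{E}_7,\tilde{E}_8,\tilde{F}_4,\tilde{G}_2$ of Figure~\ref{affine_figure}.

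I expect the main obstacle to be the combinatorial case analysis of the last step: converting the abstract corank-one condition into effective bounds on edge labels, vertex degrees, and cycle structure, and then verifying for each surviving diagram that $\rho(A)=1$ exactly (rather than $<1$, giving finite type, or $>1$, giving an indefinite form) requires careful bookkeeping of principal minors. The two structural lemmas---strict positivity of the marks and minimality of affine diagrams---are precisely what render this bookkeeping finite and mechanical rather than open-ended, since they reduce the search to extensions of the already classified finite-type diagrams by a single critical node or edge.
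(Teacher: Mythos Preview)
The paper does not actually prove this statement: it is presented as a \emph{Fact}, i.e.\ a classical result quoted without proof, and the only content supplied is the figure listing the diagrams. There is therefore no ``paper's own proof'' to compare against. Your proposal is a correct and standard outline of the classical argument (essentially the route in \cite{humphreys} or Bourbaki): attach the canonical bilinear form, characterize the affine case as positive semidefinite of corank one, use Perron--Frobenius to get strictly positive marks and the minimality lemma, and then enumerate critical diagrams over the spherical classification. That is exactly how the result is established in the literature the paper is implicitly citing, so your sketch is appropriate as a proof of the Fact, but you should be aware that for the purposes of this paper no argument is expected---the authors simply take the classification as known.
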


	\begin{fact}{\cite{moussong}} A right-angled Coxeter group is (Gromov) hyperbolic if and only if the corresponding graph does not contain an induced cycle of length four.
\end{fact}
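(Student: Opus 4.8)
The plan is to prove both implications, invoking Moussong's general hyperbolicity criterion for the substantive direction and the standard flat obstruction for the other. Throughout I use the paper's convention that in the right-angled case an edge $\{s,s'\}$ of the Coxeter graph $\Gamma$ encodes $m(s,s')=2$, i.e.\ that $s$ and $s'$ commute, while a non-edge encodes $m(s,s')=\infty$, i.e.\ that $\langle s,s'\rangle\cong D_\infty$.

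\emph{Necessity.} First I would show that an induced $4$-cycle forces non-hyperbolicity. Suppose $\Gamma$ contains an induced cycle on four vertices $s_1,s_2,s_3,s_4$, with edges $\{s_1,s_2\},\{s_2,s_3\},\{s_3,s_4\},\{s_4,s_1\}$ and non-edges $\{s_1,s_3\},\{s_2,s_4\}$. Then each of $s_1,s_3$ commutes with each of $s_2,s_4$, so the special subgroup generated by $\{s_1,s_2,s_3,s_4\}$ splits as $\langle s_1,s_3\rangle\times\langle s_2,s_4\rangle\cong D_\infty\times D_\infty$, which contains a copy of $\mathbb{Z}^2$. Since every abelian subgroup of a word-hyperbolic group is virtually cyclic, $\mathbb{Z}^2$ embeds in no hyperbolic group, and hence $W$ is not hyperbolic.

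\emph{Sufficiency.} For the converse I would quote Moussong's theorem in the following form: $(W,S)$ is word-hyperbolic if and only if (i) no subset $T\subseteq S$ spans an irreducible affine Coxeter subsystem of rank $\geq 3$, and (ii) there are no disjoint nonempty $T_1,T_2\subseteq S$ with $[W_{T_1},W_{T_2}]=1$ and $W_{T_1},W_{T_2}$ both infinite. Condition (i) is vacuous for right-angled systems: an induced subsystem of a right-angled system is again right-angled, so its labels lie in $\{2,\infty\}$, whereas every irreducible affine diagram of rank $\geq 3$ carries an edge label $\geq 3$ (the only right-angled irreducible affine Coxeter group being the infinite dihedral group, of rank $2$). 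It remains to verify (ii) under the hypothesis that $\Gamma$ has no induced $4$-cycle. Suppose (ii) failed, witnessed by disjoint $T_1,T_2$. A right-angled special subgroup is infinite precisely when its generating set contains a non-edge of $\Gamma$, so choose non-edges $\{a,b\}\subseteq T_1$ and $\{c,d\}\subseteq T_2$; disjointness of $T_1,T_2$ makes $a,b,c,d$ pairwise distinct. Since $W_{T_1}$ and $W_{T_2}$ commute elementwise, each of $\{a,c\},\{a,d\},\{b,c\},\{b,d\}$ is an edge while $\{a,b\},\{c,d\}$ are non-edges, so the induced subgraph on $\{a,c,b,d\}$ (in this cyclic order) is a $4$-cycle, contradicting the hypothesis. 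Hence (ii) holds and $W$ is hyperbolic.

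\emph{Main obstacle.} The whole weight of the argument sits in the sufficiency direction, and it is borne entirely by Moussong's theorem, whose proof builds the piecewise-Euclidean CAT(0) Davis--Moussong complex and shows that hyperbolicity is equivalent to the absence of an isometrically embedded flat; reconstructing that is far outside the scope of a preliminaries section, so the honest plan is to cite it and carry out only the purely combinatorial specialization above. The single point demanding care is stating Moussong's criterion in its correct form and checking faithfully that ``two disjoint commuting infinite special subgroups'' translates, for a right-angled $\Gamma$, into exactly ``an induced $4$-cycle'', which is the content of the reduction just performed.
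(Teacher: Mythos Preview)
Your proof is correct, but there is nothing in the paper to compare it against: the paper states this as a Fact with a bare citation to Moussong and gives no argument of its own. What you have written is exactly the standard derivation of the right-angled special case from Moussong's general criterion, and each step (the $D_\infty\times D_\infty$ obstruction for necessity, the vacuity of the affine-subsystem clause in the right-angled setting, and the extraction of an induced $4$-cycle from two commuting infinite special subgroups) is carried out cleanly.
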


\begin{convention}\label{semi_convention} 
When dealing with semidirect products we make the standard conventions. The notation $G = H \rtimes_\alpha W_0$ is used for the external semidirect product perspective. In particular, the action is given by the image of the homomorphism $\alpha: W_0 \rightarrow \mrm{Aut}(H)$. On the other hand, the notation $G = H \rtimes W_0$ is used for the internal semidirect product perspective. In particular, it is implicitly intended that $W_0$ acts by conjugation on $H$.
\end{convention}

\begin{fact}\label{fact_affine} Let $(W, S)$ be an irreducible affine Coxeter group.
Then there exists $N \trianglelefteq W$ and $W_0 \leq W$ such that:
\begin{enumerate}[(1)]
	\item $W = N \rtimes W_0$;
	\item $N \cong \mathbb{Z}^d$, for some $1 \leq d < \omega$;
	\item $W_0$ is a Weyl group (and so, in particular, a finite Coxeter group);
        \item the natural action of $W_0$ on $N$ is faithful.
\end{enumerate}
\end{fact}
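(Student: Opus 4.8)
The plan is to read off all four items from the geometric realization of $(W,S)$ as a discrete group of affine isometries of a Euclidean space $V \cong \mathbb{R}^d$ acting cocompactly, which is precisely what ``affine Coxeter group'' provides (see \cite{humphreys}). Each $w \in W$ is then an affine transformation $x \mapsto \overline{w}(x) + \tau_w$ with linear part $\overline{w} \in O(V)$, and $w \mapsto \overline{w}$ is a homomorphism $\pi \colon W \to O(V)$. I would set $N := \ker \pi$, the subgroup of pure translations in $W$. It is normal in $W$ and abelian; and since $W$ is discrete, $N$ is a discrete subgroup of the translation group $(V,+) \cong \mathbb{R}^d$, hence free abelian of rank $k$ for some $k \le d$. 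Cocompactness of the $W$-action forces $k = d$, so $N \cong \mathbb{Z}^d$, which is item~(2). (Equivalently, one reads $d$ and the explicit lattice $N$ off the classification in Figure~\ref{affine_figure}: $N$ is the coroot lattice of the associated irreducible root system.)

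To get the complement I would pick a \emph{special vertex} $v_0$ of the associated Coxeter complex, i.e.\ a point of $V$ whose $W$-stabilizer maps \emph{onto} the point group $\pi(W)$ under $\pi$; in diagram terms this amounts to deleting the affine node of the diagram in Figure~\ref{affine_figure} to recover the spherical one, and the existence of such a vertex is part of the standard structure theory of affine Weyl groups. Put $W_0 := \mathrm{Stab}_W(v_0)$. An affine isometry that fixes $v_0$ and has trivial linear part is the identity, so $\pi|_{W_0}$ is injective; by the choice of $v_0$ it is onto $\pi(W)$, hence $W_0 \cong \pi(W)$. Since $v_0$ has finite stabilizer in the Coxeter complex, $W_0$ is finite (concretely it is the standard parabolic on all but one generator, which is spherical because removing a node from an irreducible affine diagram yields a spherical one). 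Because $N = \ker\pi$ and $\pi$ maps $W_0$ isomorphically onto $\pi(W)$, we get $N \cap W_0 = 1$ and $N W_0 = W$; together with $N \trianglelefteq W$ this is the internal semidirect product $W = N \rtimes W_0$ of item~(1).

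For item~(4): the conjugation action of $W_0$ on $N$ is transported by $\pi$ to the linear action of $\pi(W)$ on the lattice $N \subseteq V$. If $h \in W_0$ acts trivially on $N$, then $\overline{h}$ fixes every vector of $N$; but $N$ spans $V$ by~(2), so $\overline{h} = \mathrm{id}$, whence $h = 1$ since $h$ fixes $v_0$. So the action is faithful. Finally, for the rest of item~(3): $W_0 \cong \pi(W)$ is a finite irreducible Coxeter group acting on $V$ as a reflection group and stabilizing the full-rank lattice $N$; it is therefore crystallographic, and a crystallographic finite Coxeter group is a Weyl group.

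The main obstacle is not any computation but the two inputs from the structure theory of affine reflection groups that the argument leans on: that the translation subgroup of a cocompact discrete affine reflection group is a \emph{full-rank} lattice, and that there exists a \emph{special} vertex whose stabilizer realizes the entire point group. Both are classical and are exactly what underlies the normal form $W = T \rtimes W_0$ recalled in the introduction; I would simply quote them from \cite{humphreys}. Everything else is the routine bookkeeping sketched above.
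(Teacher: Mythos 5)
Your proposal is correct, but it takes a more self-contained geometric route than the paper, which disposes of the statement in two lines of citations: items (1)--(3) are quoted verbatim from Brown's \emph{Buildings} (Proposition~2, p.~146), and item (4) is obtained by observing that the induced action of $W_0$ on $N \otimes \mathbb{R}$ is exactly the geometric representation of the finite Coxeter group $W_0$, which is faithful by Humphreys, Theorem~1.12. You instead reconstruct the decomposition directly from the affine realization -- $N$ as the kernel of the linear-part map, full rank via cocompactness, $W_0$ as the stabilizer of a special vertex -- and you get faithfulness from the elementary observation that an affine isometry fixing a point and acting trivially on a spanning lattice is the identity, with no appeal to the geometric representation. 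What each buys: the paper's proof is shorter and pushes all the geometric content into the references; yours makes the mechanism visible and replaces the faithfulness citation by a one-line linear-algebra argument, at the cost of leaning on two classical structural inputs (the translation subgroup is a full-rank lattice; a special vertex exists whose stabilizer surjects onto the point group), which are precisely what Brown's proposition packages. One small caveat: the paper's definition of ``affine'' only asks for a representation as a discrete affine reflection group, so the cocompactness you use is itself part of the structure theory of irreducible such groups rather than part of the definition -- harmless, since it comes from the same sources you quote, but worth flagging if you want the argument to stand on the stated hypotheses alone.
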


    \begin{proof} Items (1)-(3) are \cite[Proposition~2, pg. 146]{brown}. Concerning (4), the action of $W_0$ on $\mathbb{Z}^n$ gives us an action of $W_0$ on $\mathbb{R}^n := \mathbb{Z}^n \otimes \mathbb{R}$
and the action of $W_0$ on $\mathbb{R}^n$ is precisely the geometric representation
of the finite Coxeter group $W_0$ (see \cite[Proposition~4.2]{humphreys}). Thus, to conclude it suffices to remark that this action is faithful (cf. \cite[Theorem~1.12]{humphreys}).
\end{proof}

\section{Affine Coxeter groups}\label{Affine}

\subsection{Prime models}


\begin{fact}\label{fact_matrix} Let $G$ be an Abelian group and let $M$ be an $n \times n$ matrix with integer coefficients. Then we can associate to $M$ an endomorphism, $f_M:G^n\rightarrow G^n$, as follows:
	$$(g_1, ..., g_n) \mapsto M \cdot \begin{pmatrix}
           g_{1} \\
           g_{2} \\
           \vdots \\
           g_{n}
         \end{pmatrix}$$
Furthermore, if $M \in \mrm{GL}_n(\mathbb{Z})$, then $f_M$ is an automorphism of $G^n$.
\end{fact}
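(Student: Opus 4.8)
The plan is to reduce everything to the canonical $\mathbb{Z}$-module structure carried by any abelian group. First I would recall that, $G$ being abelian, it becomes a $\mathbb{Z}$-module by setting $k \cdot g = g + \cdots + g$ ($k$ summands) for $k \geq 0$ and $k \cdot g = -((-k)\cdot g)$ for $k < 0$; consequently $G^n$ is a $\mathbb{Z}$-module under componentwise addition and scalar multiplication. Under this reading the displayed formula is literally meaningful: writing $M = (m_{ij})$, the map $f_M$ sends $(g_1, \dots, g_n)$ to the tuple whose $i$-th entry is $\sum_{j=1}^{n} m_{ij}\cdot g_j \in G$.

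Next I would verify that $f_M$ is an endomorphism of the group $G^n$. Additivity is checked entrywise: for $x = (g_1,\dots,g_n)$ and $y = (h_1,\dots,h_n)$ the $i$-th coordinate of $f_M(x+y)$ equals $\sum_j m_{ij}\cdot(g_j + h_j) = \sum_j m_{ij}\cdot g_j + \sum_j m_{ij}\cdot h_j$ by the distributive law in the $\mathbb{Z}$-module $G$, which is exactly the $i$-th coordinate of $f_M(x) + f_M(y)$. Hence $f_M \in \mrm{End}(G^n)$, which gives the first assertion.

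For the ``furthermore'' part I would first record the two functoriality identities $f_{I_n} = \mrm{id}_{G^n}$ and $f_M \circ f_{M'} = f_{MM'}$ for all integer matrices $M, M'$; the first is immediate and the second is the familiar computation showing that matrix multiplication represents composition of linear maps, using only associativity of integer multiplication and distributivity of the module action (no division is involved, so it is valid for arbitrary $G$). Now suppose $M \in \mrm{GL}_n(\mathbb{Z})$, so that its inverse $M^{-1}$ again has integer entries. Then $f_M \circ f_{M^{-1}} = f_{M M^{-1}} = f_{I_n} = \mrm{id}_{G^n}$ and, symmetrically, $f_{M^{-1}} \circ f_M = \mrm{id}_{G^n}$; thus $f_M$ is a bijection with two-sided inverse $f_{M^{-1}}$, hence an automorphism of $G^n$.

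There is essentially no genuine obstacle here; the only point worth a word of care is that ``$M \in \mrm{GL}_n(\mathbb{Z})$'' must be understood as ``$M$ invertible over $\mathbb{Z}$'' (equivalently $\det M = \pm 1$), and not merely as $M$ being invertible over $\mathbb{Q}$: for a group $G$ with torsion, an integer matrix whose inverse is rational but not integral need not induce a surjective (or injective) map, so the integrality of $M^{-1}$ is precisely what makes the composition argument go through.
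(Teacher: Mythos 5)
Your proof is correct and is exactly the routine module-theoretic argument: the paper states this as a Fact with no proof at all, treating it as standard, and your verification (additivity entrywise, the identities $f_{I_n}=\mathrm{id}$ and $f_M\circ f_{M'}=f_{MM'}$, then inverting via $f_{M^{-1}}$ when $M\in\mathrm{GL}_n(\mathbb{Z})$) is precisely the argument the paper implicitly relies on. Nothing is missing, and your closing caveat about needing $M^{-1}$ to be integral is a sensible remark.
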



In the light of the above fact we observe the following. Given a morphism $\alpha:W\rightarrow GL_n(\mathbb{Z})$ and an abelian group $G$, we associate to the couple $(\alpha, G)$ a morphism $\alpha_G:W\rightarrow Aut(G^n)$, where $\alpha_G$ maps an element $w\in W$ to the automorphism associated by Fact \ref{fact_matrix} to the matrix $\alpha(w)$. In particular, for $\alpha:W\rightarrow GL_n(\mathbb{Z})$ and $G$ an Abelian group, the notation $G^n \rtimes_{\alpha} W$ meaning $G^n \rtimes_{\alpha_G} W$,  makes sense.


Following \cite[Chapter~3]{hodges}, we denote by $\mrm{EF}_k[A, B]$ the Ehrenfeucht-Fra\"iss\'e game of length $k$ with winning condition the preservation of unnested atomic formulas.
	
\begin{fact}{\cite[Corollary 3.3.3, p.105]{hodges}} Let $A, B$ be $L$-structures (of finite language $L$). Then TFAE:
	\begin{enumerate}[(1)]
	\item $A \equiv B$;
	\item for all $k < \omega$, Player II has a winning strategy in the game $\mrm{EF}_k[A, B]$.
	\end{enumerate}
\end{fact}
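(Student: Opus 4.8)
The plan is to reconstruct the classical Ehrenfeucht--Fra\"iss\'e argument, which tightens the correspondence between the length of the game and the quantifier rank of first-order formulas. The engine of the proof is a family of \emph{Hintikka} (or characteristic) formulas: for each $k < \omega$, each $L$-structure $A$, and each tuple $\bar a = (a_1, \dots, a_n) \in A^n$, I would construct a formula $\varphi^k_{A, \bar a}(x_1, \dots, x_n)$ of quantifier rank at most $k$ with the property that, for every $L$-structure $B$ and every $\bar b \in B^n$, one has $B \models \varphi^k_{A, \bar a}(\bar b)$ if and only if Player II has a winning strategy in the $k$-round game $\mrm{EF}_k$ played from the initial position $(\bar a, \bar b)$. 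The whole construction rests on the hypothesis that $L$ is finite.

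Concretely, the Hintikka formulas are defined by induction on $k$. For $k = 0$ I would let $\varphi^0_{A, \bar a}$ be the conjunction of all unnested atomic formulas and negations of unnested atomic formulas $\psi(x_1, \dots, x_n)$ satisfied by $\bar a$ in $A$; since $L$ is finite there are, for a fixed tuple of variables, only finitely many unnested atomic formulas, so this conjunction is a genuine (finite) first-order formula. For the inductive step I would set
\[
\varphi^{k+1}_{A, \bar a}(\bar x) \;=\; \bigwedge_{a \in A} \exists x_{n+1}\, \varphi^{k}_{A, \bar a a}(\bar x, x_{n+1}) \;\wedge\; \forall x_{n+1} \bigvee_{a \in A} \varphi^{k}_{A, \bar a a}(\bar x, x_{n+1}).
\]
A priori these conjunctions and disjunctions range over all of $A$, but the crucial observation, again using finiteness of $L$, is that for fixed $k$ and fixed arity there are only finitely many formulas of quantifier rank at most $k$ up to logical equivalence; hence the displayed conjunction and disjunction collapse to finite ones and $\varphi^{k+1}_{A, \bar a}$ is again a legitimate first-order formula, of quantifier rank at most $k+1$.

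The heart of the matter is then the following claim, to be proved by induction on $k$: for all $A, B, \bar a, \bar b$ the following are equivalent: (i) Player II wins the $k$-round game from $(\bar a, \bar b)$; (ii) $B \models \varphi^k_{A, \bar a}(\bar b)$; and (iii) $(A, \bar a)$ and $(B, \bar b)$ satisfy the same unnested formulas of quantifier rank at most $k$. The base case is immediate from the winning condition of the game, which is precisely preservation of unnested atomic formulas. For the inductive step, a first move by Player I placing an element into one structure, matched by a response of Player II in the other, corresponds exactly to the alternation of the existential and universal quantifiers in the definition of $\varphi^{k+1}_{A, \bar a}$; the back-and-forth symmetry of the game is reflected in the symmetric $\exists\forall$ shape of the Hintikka formula.

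With the claim in hand both implications follow quickly. For $(1) \Rightarrow (2)$, fix $k$ and consider the Hintikka \emph{sentence} $\varphi^k_A := \varphi^k_{A, \emptyset}$, which $A$ satisfies by construction; as it is a sentence of quantifier rank at most $k$ and $A \equiv B$, we get $B \models \varphi^k_A$, so by the claim Player II wins $\mrm{EF}_k[A, B]$. For $(2) \Rightarrow (1)$, take any sentence $\sigma$; by the standard result that every formula is logically equivalent to an unnested one (see \cite{hodges}), $\sigma$ is equivalent to an unnested sentence of some finite quantifier rank $k$, and since Player II wins the $k$-round game from the empty position, clause (iii) of the claim (with $\bar a, \bar b$ empty) shows that $A$ and $B$ agree on $\sigma$; as $\sigma$ was arbitrary, $A \equiv B$. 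The main obstacle is not any single implication but the bookkeeping that keeps the Hintikka formulas finite and of controlled quantifier rank: this is exactly where the hypothesis that $L$ is finite is indispensable, and it is also the reason the winning condition is phrased in terms of \emph{unnested} atomic formulas rather than arbitrary ones, so that passing through one quantifier costs precisely one level of rank.
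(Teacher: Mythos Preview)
The paper does not prove this statement at all: it is recorded as a \emph{Fact} with a bare citation to \cite[Corollary~3.3.3]{hodges}, and is then used as a black box in the proof of Theorem~\ref{the_game_theorem}. Your proposal correctly reconstructs the classical Hintikka-formula argument (which is essentially the proof given in \cite{hodges}), so there is nothing to compare on the paper's side; your write-up is sound, and in particular your handling of the two points that actually need care --- the finiteness of the conjunctions/disjunctions via the finite-language bound on formulas of given quantifier rank, and the reduction of arbitrary sentences to unnested ones for the direction $(2)\Rightarrow(1)$ --- is accurate.
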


\begin{theorem}\label{the_game_theorem} Suppose that $G \equiv \mathbb{Z}$, $W_0$ is a finite group and $\alpha: W_0 \rightarrow GL_n(\mathbb{Z})$ is an homomorphism. Then $\mathbb{Z}^n \rtimes_{\alpha} W_0 \equiv G^n \rtimes_{\alpha} W_0$.
\end{theorem}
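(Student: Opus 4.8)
The plan is to use the Ehrenfeucht--Fra\"iss\'e criterion recorded just above: it suffices to show that for every $k < \omega$, Player II has a winning strategy in the game $\mrm{EF}_k[\mathbb{Z}^n \rtimes_\alpha W_0, \, G^n \rtimes_\alpha W_0]$. Since $G \equiv \mathbb{Z}$, we already know that for every $\ell < \omega$, Player II wins $\mrm{EF}_\ell[\mathbb{Z}, G]$; the task is to amplify this to a win in the game on the semidirect products. I would first set up coordinates: every element of $\mathbb{Z}^n \rtimes_\alpha W_0$ is written uniquely as $(\bar a, w)$ with $\bar a \in \mathbb{Z}^n$ and $w \in W_0$, and similarly for the other group, with multiplication $(\bar a, w)(\bar b, v) = (\bar a + \alpha(w)\bar b, \, wv)$. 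The finite group $W_0$ is common to both structures, so the ``$W_0$-part'' of a play can simply be matched on the nose by Player II.

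Next I would describe Player II's strategy. Fix $k$. Run, in parallel, $n$ copies of Player II's winning strategy for a suitably long EF-game on $\mathbb{Z}$ versus $G$ — say of length $K = K(k,n,|W_0|)$, chosen large enough to absorb the bookkeeping below. When Player I plays an element $(\bar a, w)$ (in either structure), Player II reads off the $W_0$-coordinate $w$ and copies it; for the $\mathbb{Z}^n$-coordinate $\bar a = (a_1, \dots, a_n)$, Player II feeds each $a_i$ to the $i$-th copy of the $\mathbb{Z}$-strategy and plays the resulting tuple of responses. The point is that multiplication and inversion in the semidirect product, expressed in coordinates, involve only the $W_0$-coordinates (handled exactly), the integer matrices $\alpha(w)$ (which have \emph{fixed integer entries}, hence act on the $\mathbb{Z}$-coordinates by $\mathbb{Z}$-linear combinations with bounded coefficients that are the \emph{same} in both structures by Fact \ref{fact_matrix}), and addition in $\mathbb{Z}$ versus $G$. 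So any unnested atomic formula about a tuple from $\mathbb{Z}^n \rtimes_\alpha W_0$ unwinds into: a statement about the $W_0$-coordinates (preserved because they are copied identically), plus finitely many $\mathbb{Z}$-linear equations among the integer coordinates with coefficients depending only on $\alpha$ and $k$ — equations that the parallel $\mathbb{Z}$-strategies preserve provided $K$ was taken large enough relative to the sizes of those coefficients.

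The last step is to make the bookkeeping precise: I would argue that a word of length $\le k$ in the played elements, evaluated in the group, has each integer coordinate equal to a fixed $\mathbb{Z}$-linear combination of the played integer coordinates, with coefficients bounded by a function of $k$ and the entries of the $\alpha(w)$'s (finitely many matrices, so a uniform bound $C$ exists). Hence an atomic equality of two such words reduces to a conjunction of $\le n$ equalities of $\mathbb{Z}$-linear combinations; choosing $K$ so that the $\mathbb{Z}$-vs-$G$ strategies are robust under forming such combinations (e.g. $K \ge k$ after pre-loading the relevant scalar multiples, or simply invoking that $\mathbb{Z} \equiv G$ already gives preservation of all such linear atomic relations for plays of length $k$) closes the argument. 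The main obstacle I anticipate is precisely this: being careful that the ``same'' linear combination really does appear on both sides — which is exactly where one uses that $\alpha$ lands in $GL_n(\mathbb{Z})$ and that Fact \ref{fact_matrix} transports each matrix to an automorphism defined by the \emph{identical} integer formula over any abelian group — and that the length $K$ of the auxiliary games is chosen before Player I moves, uniformly in $k$. Once that is in place, Player II wins $\mrm{EF}_k$ for every $k$, and the theorem follows.
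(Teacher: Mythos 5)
Your overall route is the same as the paper's: use the Ehrenfeucht--Fra\"iss\'e criterion, copy the $W_0$-coordinate exactly, and shadow the abelian coordinates in auxiliary play(s) of $\mathbb{Z}$ versus $G$. However, two points in your execution do not work as stated. First, you run $n$ \emph{independent} auxiliary games, feeding the $i$-th coordinate of each played element to the $i$-th copy. The unnested atomic formula $x_0\cdot x_1=y$ in the semidirect product unwinds, in coordinate $i$, to $a_i+\sum_{l} a^t_{il}b_l=c_i$, which mixes \emph{all} coordinates of $\bar b$ whenever $\alpha(w_t)$ has off-diagonal entries; a relation whose elements are spread over independently played games is not controlled by any of the individual strategies, no matter how long they are (take $n=2$ and a matrix row $(1,-1)$: the two copies may answer $b_1\mapsto b_1'$, $b_2\mapsto b_2'$ with $b_1'\neq b_2'$ even though $b_1=b_2$). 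One needs a single auxiliary game that sees all the coordinates together, which is what the paper does.

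Second, even in a single auxiliary game, the winning condition of $\mrm{EF}_K[\mathbb{Z},G]$ is preservation of unnested atomic formulas \emph{among the played elements}; an equation such as $b_{j1}+\sum_{l} a^t_{1l}b_{kl}=b_{r1}$ is not of that form, and merely taking $K$ large does not strengthen the condition at the end of the game, nor does ``invoking $\mathbb{Z}\equiv G$'', since the elements involved are chosen adaptively during play. You gesture at the right fix (``pre-loading the relevant scalar multiples''), and the paper makes exactly this precise: each played tuple $\bar b$ is replaced by an extended tuple $\bar b^+$ containing all intermediate partial sums $b_l+b_l$, $b_l+b_l+b_l$, \dots, up to the full combinations $\sum_{l} a^t_{il}b_l$ for every matrix $\alpha(w_t)$, played move by move in one auxiliary game of length $R\cdot m$; then the required equalities become unnested atomic relations among played elements and are preserved. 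Alternatively, your sketch could be completed by the standard fact that a position from which Player II can still win $r$ further rounds preserves unnested formulas of quantifier rank at most $r$, choosing $K-nk$ larger than the rank needed to express the relevant linear equations in unnested form --- but as written, this crucial step is missing, so the argument has a genuine gap.
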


\begin{proof} Since $G \equiv \mathbb{Z}$ we know that, for all $k < \omega$, Player II has a winning strategy in the game $\mrm{EF}_k[\mathbb{Z}, G]$. Using this we want to show that, for all $m < \omega$, Player II has a winning strategy in the game $\mrm{EF}_m[\mathbb{Z}^n \rtimes_{\alpha} W_0, G^n \rtimes_{\alpha} W_0]$. To this extent, we fix $m < \omega$ and show how to play the game $\mrm{EF}_m[\mathbb{Z}^n \rtimes_{\alpha} W_0, G^n \rtimes_{\alpha} W_0]$. 

\smallskip 
\noindent The general strategy is as follows: when playing a play of the game $\mrm{EF}_m[\mathbb{Z}^n \rtimes_{\alpha} W, G^n \rtimes_{\alpha} W_0]$, on the side we play an auxiliary play of the game $\mrm{EF}_{f(m)}[\mathbb{Z}, G]$ (for some $f(m) \geq m$) which will ensure that the play in $\mrm{EF}_m[\mathbb{Z}^n \rtimes_{\alpha} W_0, G^n \rtimes_{\alpha} W_0]$ is winning for Player~II. 
	
\smallskip
\noindent Let $W_0=\{w_1, ..., w_t\}$. Now, given $\bar{b}=(b_1, b_2, \ldots, b_n)$ of length $n$, which is either in $\mathbb{Z}^n$ or in $G^n$, we will extend $\bar{b}$ to a tuple $\bar{b}^+$ in some cartesian power of $\mathbb{Z}$ or $G$ respectively, as follows: we essentially concatenate all coordinates of images of $\bar b$ by the automorphisms that correspond to $\alpha(w_i)$, for each $i\leq t$. More technically, if $M_r=(a^r_{ij})$ is the matrix associated to $\alpha(w_r)$, for $r\leq t$, then $\bar{b}^+$ is the tuple:
\begin{enumerate}[(1)]
	\item $\bar{b}$; concatenated with  
	\item $b_1 + b_1$, $b_1 + b_1 + b_1$, \ldots, $a^1_{11}b_1$; concatenated with 
	\item $b_2 + b_2$, $b_2 + b_2 + b_2$, \ldots, $a^1_{12}b_2$; concatenated with \\
	$\vdots$
	\item $b_n + b_n$, $b_n + b_n + b_n$, \ldots, $a^1_{1n}b_n$; concatenated with 
	\item $a^1_{11}b_1 + a^1_{12}b_2$, \ldots, $\sum^n_{j = 1} a^1_{1j} b_j$; concatenated with 
	\item we do the same for each row of the matrix $M_1$; concatenated with 
    \item we do the same for each row of the matrix $M_2$; concatenated with \\ 
	$\vdots$
	\item We do the same for each row of the matrix $M_r$.
\end{enumerate}
We note in passing that one of the matrices is the identity matrix. 
We also observe that the length, say $R$, of the tuple $\bar b^+$ only depends on the integer values of the coefficients of the matrices $M_r$, for $r\leq t$. With this at hand, we finally explain the winning strategy for Player II in the game $\mrm{EF}_m[\mathbb{Z}^n \rtimes_{\alpha} W, G^n \rtimes_{\alpha} W_0]$.

\smallskip
\noindent Suppose Player I plays $(\bar{b}, w_r)$ in the game $\mrm{EF}_{m}[\mathbb{Z}^n \rtimes_{\alpha} W_0, G^n \rtimes_{\alpha} W_0]$. Player II will guide their choices by playing another game on the side. If $\bar b$ belongs to $\mathbb{Z}^n$, then Player II will play $\bar b^+$ (a tuple in $\mathbb{Z}^R$), element by element, in the game $\mrm{EF}_{f(m)}[\mathbb{Z}, G]$, where $f(m)=R\cdot m$ and will reply back in the original game the answers he gets, say $(b_1', \ldots, b_n')$, to the first $n$ elements he played in the auxiliary game, plus $w_r$. Likewise if Player I had chosen a tuple in $G^n$. 

\smallskip
\noindent We argue that this strategy is winning for Player II. To this extent, let $((\bar{b}_1, w_{i_1}), $ $ (\bar{b}_2, w_{i_2}), \ldots, (\bar{b}_m, w_{i_m}); (\bar{b}_1', w_{i_1}), (\bar{b}_2', w_{i_2}), \ldots, (\bar{b}'_m, w_{i_m}))$ be a play at the end of the game, where $\bar b_i$ belongs to $\mathbb Z^n$ and $\bar b_i'$ belongs to $G^n$, for all $i\leq m$. To avoid any confusion in the forementioned play we have collected the tuples of each structure in the same side of ; independent of which Player played them. In addtion, the auxiliary game will look like $(b_{11}, b_{12}, \ldots, b_{1n},\ldots, b_{1R}, b_{21}, b_{22}, \ldots, b_{2n},\ldots, b_{2R}, \ldots, b_{m1}, b_{m2},$ $ \ldots, b_{mn}, \ldots b_{mR}\ ; \ b_{11}', b_{12}', \ldots, b_{1n}',\ldots, b_{1R}', b_{21}', b_{22}', \ldots, b_{2n}',\ldots, b_{2R}', \ldots, b_{m1}', b_{m2}', \ldots,$ $ b_{mn}', \ldots, b_{mR}')$. Blocks of $R$-many elements starting from the beginning have been played by the same Player, but each side of ; might contain plays from a different player as in the main game. We need to show that all the unnested atomic formulas are preserved. Recall that in the language of groups the unnested atomic formulas are formulas of the form $x=y$, $1=y$, $x^{-1}=y$ and  $x_0\cdot x_1=y$. 
We check each of these cases. 
\begin{itemize}

\item[{\underline{Case 1}}:] $x=y$. We need to show $(\bar b_j, w_{i_j})=(\bar b_k, w_{i_k})$ if and only if $(\bar b_j', w_{i_j})=(\bar b_k', w_{i_k})$, for any $j,k\leq m$. We observe that $(\bar b_j, w_{i_j})=(\bar b_k, w_{i_k})$ if and only if $(b_{j1}, b_{j2}, \ldots, b_{jn})=(b_{k1}, b_{k2}, \ldots, b_{kn})$ and $w_{ij}=w_{ik}$ and since all the $b's$ in the above tuples are  part of the auxiliary game (in which unnested formulas are preserved) we have that $(b_{j1}, b_{j2}, \ldots, b_{jn})=(b_{k1}, b_{k2}, \ldots, b_{kn})$ if and only if $(b_{j1}', b_{j2}', \ldots, b_{jn}')=(b_{k1}', b_{k2}', \ldots,$ $ b_{kn}')$, and thus we can conclude.

\item[{\underline{Case 2}}:] $1=y$. Case 2 is similar to Case 1, by just observing that $(\bar b_j, w_{i_j})=1$ if and only if $b_{j1}=0$ and $b_{j2}=0$ and \ldots and $b_{jn}=0$ and $w_{i_j}=e$, where $e$ is the identity element of $W_0$. Hence $(\bar b_j, w_{ij})=1$ if and only if $(\bar b_j', w_{ij})=1$.
\\ \\
\item[{\underline{Case 3}}:] $x^{-1}=y$. In this case we need to show $(\bar b_j, w_{i_j})^{-1}=(\bar b_k, w_{i_k})$ if and only if $(\bar b_j', w_{i_j})^{-1}=(\bar b_k', w_{i_k})$, for any $j,k\leq m$. We note that $(\bar b,w)^{-1}=(\alpha(w^{-1})(-\bar b), w^{-1})$. Hence $(\bar b_j, w_{i_j})^{-1}=(\bar b_k, w_{i_k})$ if and only if $-\alpha(w_{i_j}^{-1})(\bar b_j)$ $=\bar b_k$ and $w_{i_j}^{-1}=w_{i_k}$. Now we observe that each coordinate of $\alpha(w_{i_j}^{-1})(\bar b_j)=(a^t_{11}b_{j1}+a^t_{12}b_{j2}+\ldots+ a^t_{1n}b_{jn}, \ldots, a^t_{n1}b_{j1}+a^t_{n2}b_{j2}+\ldots+ a^t_{nn}b_{jn}$ (we assume $w_{i_j}^{-1}=w_t$ for some $t\leq r$) is part of the auxiliary game and in particular it corresponds to the same coordinate of $\alpha(w_{i_j}^{-1})(\bar b_j')$. The latter holds because we have chosen to add to the auxiliary game all the sums of the form $b_{j1}$, $b_{j1}+b_{j1}, \ldots, a^t_{1n}b_{j1}$ etc. Therefore, since formulas of the form $-x=y$ (additive notation) are preserved in the auxiliary game we get that $a^t_{11}b_{j1}+a^t_{12}b_{j2}+\ldots+ a^t_{1n}b_{jn}=-b_{k1}$ if and only if $a^t_{11}b_{j1}'+a^t_{12}b_{j2}'+\ldots+ a^t_{1n}b_{jn}'=-b_{k1}'$ and $\ldots$ and  $a^t_{n1}b_{j1}+a^t_{n2}b_{j2}+\ldots+ a^t_{nn}b_{jn}=-b_{kn}$ if and only if $a^t_{n1}b_{j1}'+a^t_{n2}b_{j2}'+\ldots+ a^t_{nn}b_{jn}'=-b_{kn}'$. Since the equality $w_{i_j}^{-1}=w_{i_k}$ stays the same in both structures, we may conclude.

\item[{\underline{Case 4}}:] $x_0\cdot x_1=y$. This is the most interesting case, but not so much different from the previous one. We need to prove that $(\bar b_j, w_{i_j})\cdot (\bar b_k, w_{i_k})= (\bar b_r, w_{i_r})$ if and only if  
$(\bar b_j', w_{i_j})\cdot (\bar b_k', w_{i_k})= (\bar b_r', w_{i_r})$, for any $j,k,r\leq m$. We note that 
$(\bar b_1, w_1)\cdot (\bar b_2, w_2)=(\bar b_1 + \alpha(w_1)(\bar b_2), w_1w_2)$. Hence, 
$(\bar b_j, w_{i_j})\cdot (\bar b_k, w_{i_k})= (\bar b_r, w_{i_r})$ if and only if $(\bar b_j + \alpha(w_{i_j})(\bar b_k), w_{i_j}w_{i_k})=(\bar b_r, w_{i_r})$. Now as in the previous case, each coordinate of $\alpha(w_{i_j})(\bar b_k)$ is part of the auxiliary game and it corresponds to the same coordinate in $\alpha(w_{i_j})(\bar b_k')$. In particular, if $w_{i_j}=w_t$ for some $t\leq r$, we get $b_{j1}+a^t_{11}b_{k1}+a^t_{12}b_{k2}+\ldots+ a^t_{1n}b_{kn}=b_{r1}$ if and only if 
$b_{j1}'+a^t_{11}b_{k1}'+a^t_{12}b_{k2}'+\ldots+ a^t_{1n}b_{kn}'=b_{r1}'$ and \ldots and 

$b_{jn}+a^t_{n1}b_{k1}+a^t_{n2}b_{k2}+\ldots+ a^t_{nn}b_{kn}=b_{rn}$ if and only if 
$b_{jn}'+a^t_{n1}b_{k1}'+a^t_{n2}b_{k2}'+\ldots+ a^t_{nn}b_{kn}'=b_{rn}'$. Since the equality $w_{i_j}w_{i_k}=w_{i_r}$ stays the same in both structures, we may conclude.

\end{itemize}

\end{proof}

\begin{proposition}\label{the_prop_def_transla} Let $G = T \rtimes_\alpha W_0$, with $T$ torsion-free abelian,  $W_0$ finite and $\alpha:W_0\rightarrow Aut(T)$ injective. Then $T$ is $\emptyset$-definable in $G$. As a matter of fact, if $W_0$ has order $m$, then  $T$ is definable by the following first-order formula:
    $$ \phi(x):= \forall y ([x,y^m]=1)$$
\end{proposition}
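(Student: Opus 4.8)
The plan is to unwind $\phi$ using the semidirect product structure and the hypothesis $|W_0| = m$. Write a generic element of $G$ as $(t,w)$ with $t \in T$, $w \in W_0$, and recall the multiplication $(s,v)\cdot(s',v') = (s + \alpha(v)(s'),\, vv')$. First I would compute $y^m$ for an arbitrary $y = (s,v)$: an easy induction gives $(s,v)^k = \bigl(\sum_{i=0}^{k-1}\alpha(v)^i(s),\, v^k\bigr)$, and since $|W_0| = m$ Lagrange's theorem yields $v^m = e$, so $y^m = (u_y, e) \in T$ with $u_y := \sum_{i=0}^{m-1}\alpha(v)^i(s)$. Next, for $x = (t,w)$ and any $z = (u,e) \in T$ one has $xz = (t + \alpha(w)(u),\, w)$ and $zx = (u + t,\, w)$, so, $T$ being abelian, $[x,z] = 1$ iff $\alpha(w)(u) = u$. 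Combining, $x = (t,w)$ satisfies $\phi$ if and only if $\alpha(w)$ fixes $u_y$ for every $y \in G$.

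From this reformulation both inclusions are short. If $x \in T$, then $w = e$, so $\alpha(w) = \mathrm{id}_T$ fixes every $u_y$ and $x$ satisfies $\phi$; this gives $T \subseteq \phi(G)$. For the converse, suppose $x = (t,w)$ with $w \neq e$. Since $\alpha$ is injective, $\alpha(w) \neq \mathrm{id}_T$, so there is $s \in T$ with $\alpha(w)(s) \neq s$. Now test $\phi$ against the special element $y := (s, e)$: here $v = e$, so $u_y = \sum_{i=0}^{m-1} s = m s$ (the $m$-fold sum of $s$ with itself), and $\alpha(w)(u_y) = u_y$ would read $m\,\alpha(w)(s) = m\,s$, i.e. $m\bigl(\alpha(w)(s) - s\bigr) = 0$ in $T$; as $T$ is torsion-free this forces $\alpha(w)(s) = s$, contradicting the choice of $s$. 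Hence $x \not\models \phi$, so $\phi(G) \subseteq T$, and therefore $\phi(G) = T$.

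There is no serious obstacle: the computation of $y^m$ and the reduction of $[x,y^m]=1$ to ``$\alpha(w)$ fixes $u_y$'' are routine, and the only point worth isolating is that specializing $y$ to $(s,e)$ collapses the norm element $u_y$ to $ms$, after which torsion-freeness of $T$ lets us cancel the $m$ and invoke injectivity of $\alpha$. (One could instead keep a general $v$ and argue with the operator $\sum_{i}\alpha(v)^i$, but the choice $v = e$ makes this unnecessary.)
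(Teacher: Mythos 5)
Your proof is correct and follows essentially the same route as the paper: both reduce $[x,y^m]=1$ to the condition that $\alpha(w)$ fixes the relevant element of $T$, get $T\subseteq\phi(G)$ from the fact that all $m$-th powers land in the abelian subgroup $T$, and get the reverse inclusion by testing against $y=(s,e)\in T$, cancelling the factor $m$ by torsion-freeness and invoking injectivity of $\alpha$. The only cosmetic difference is that you compute $y^m=\bigl(\sum_{i=0}^{m-1}\alpha(v)^i(s),e\bigr)$ explicitly for arbitrary $y$, whereas the paper simply notes that $g^m\in T$ because $T$ has index $m$.
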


\begin{proof} 
We observe that since $T \ (=T\times \{e\})$ is a normal subgroup of $G$ of index $m$, we have that for every $g\in G$, $g^m$ belongs to $T$. In addition, $T$ is abelian, thus if $a\in T$, then $a$ commutes with every $m$-th power of $G$ and consequently $G\models \phi(a)$. Hence, $T\subseteq \phi(G)$.

\smallskip
\noindent Suppose now that there exists $g = (a, w) \in G$ with $w \neq e$ such that $G\models \phi(g)$. In particular, for every $(b,e) \in T$, we have that the following holds:
$$[(a, w), (b, e)^m] = [(a, w), (mb, e)] = (0, e).$$
But then, for every $(b,e) \in T$, we have that the following holds:
	$$(a, w)(mb, e) = (a + \alpha(w)(mb), w),$$
 and 
 $$(mb, e)(a, w)=(mb + a, w).$$

\smallskip
\noindent Hence, $(mb + a, w)=(a + \alpha(w)(mb), w)$. Thus, for every $(b,e) \in T$ we have that $\alpha(w)(mb) = mb$, but $\alpha(w) \in \mrm{Aut}(T)$ and so we have that $\alpha(w)(mb)=m\alpha(w)(b)$, and consequently $m\alpha(w)(b) = mb$. Now, since $T$ is torsion-free, it follows that $\alpha(w)(b) = b$ for every $b$ (such that $(b,e)\in T$). But this is a contradiction, as $\alpha$ was assumed to be injective and $w\neq e$.
\end{proof}

To prove the next result we will follow and adapt the main idea of \cite{BBGK73}. There it was shown that the first-order theory of the infinite cyclic group does not admit a prime model. We denote by $\mathbb{Z}_{(p)}$, where $p$ is a prime, the (additive) group of all rationals whose denominator (in lowest terms) is not divisible by $p$. We will use the fact, obtained by Szmeliew's characterization of complete theories of abelian groups, that $\bigoplus_{p \in \mathbb{P}} \mathbb{Z}_{(p)}$, where $\mathbb{P}$ is the set of primes, is elementarily equivalent to~$\mathbb{Z}$. 

\begin{theorem} \label{Nonprimality}
Let $W = \mathbb{Z}^n \rtimes_{\alpha} W_0$, where $W_0$ is finite and $\alpha:W_0\rightarrow GL_n(\mathbb{Z})$ injective. Then $\mrm{Th}(W)$ does not have a prime model.
\end{theorem}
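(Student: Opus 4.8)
The plan is to mimic the Baumslag--Blackburn--style argument from \cite{BBGK73} for $\mathbb{Z}$, lifted through the semidirect product decomposition and the definability of the translation subgroup. Suppose towards a contradiction that $\mathrm{Th}(W)$ has a prime model $P$. Since $W$ itself is a model, $P$ embeds elementarily into $W$; in particular $P$ is finitely generated (being an elementary substructure of a finitely generated abelian-by-finite group which is equationally Noetherian, or more simply: a prime model is countable and, as we will see, forced to be abelian-by-finite of the right shape, hence f.g.). The key structural input is Proposition~\ref{the_prop_def_transla}: the translation subgroup is uniformly $\emptyset$-definable by $\phi(x) := \forall y\,([x,y^m]=1)$ with $m = |W_0|$. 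Therefore in \emph{any} model $M \models \mathrm{Th}(W)$, the set $\phi(M)$ is a definable normal abelian subgroup of index $m$, and by Theorem~\ref{the_game_theorem} (together with Oger's work on abelian-by-finite groups and the fact that $W \equiv G^n \rtimes_\alpha W_0$ for $G \equiv \mathbb{Z}$) we know exactly which groups arise: $M \cong G^n \rtimes_\alpha W_0$ for some $G \equiv \mathbb{Z}$, with $\phi(M) = G^n$. So the problem reduces to the abelian group $G$: $P$ being prime for $\mathrm{Th}(W)$ forces $\phi(P) = G_0^n$ where $G_0$ is a prime model contribution for $\mathrm{Th}(\mathbb{Z})$ in a suitable sense.

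The main step is then to show no such $P$ can be prime, by producing, for every candidate $P \cong G_0^n \rtimes_\alpha W_0$ with $G_0$ finitely generated and $G_0 \equiv \mathbb{Z}$ (so in fact $G_0 \cong \mathbb{Z}$), an elementarily equivalent group into which $P$ does \emph{not} elementarily embed. Here I would use the classical fact recalled just before the statement: $\bigoplus_{p \in \mathbb{P}} \mathbb{Z}_{(p)} \equiv \mathbb{Z}$. Set $G^\ast := \bigoplus_{p} \mathbb{Z}_{(p)}$ and $W^\ast := (G^\ast)^n \rtimes_\alpha W_0$; by Theorem~\ref{the_game_theorem}, $W^\ast \equiv W$. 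Now I claim $\mathbb{Z}^n \rtimes_\alpha W_0$ (equivalently, any f.g.\ model) does not embed elementarily into $W^\ast$. The point, exactly as in \cite{BBGK73}, is a divisibility obstruction: pick a nonzero $a \in \mathbb{Z}^n \le W$; an elementary embedding $f$ must send $a$ to an element $f(a)$ of $\phi(W^\ast) = (G^\ast)^n$ realizing the same type, and in particular $f(a)$ must be divisible by infinitely many primes inside $(G^\ast)^n$ (this is the type of a nonzero element of $\mathbb{Z}$ in a model elementarily equivalent to $\mathbb{Z}$) — but being divisible by $p$ in $(G^\ast)^n$ for \emph{every} $p$ is not first-order expressible as a single formula, so one must be more careful: instead one shows $f$ restricted to $\phi(P) \to \phi(W^\ast)$ is an elementary embedding of the corresponding abelian groups, and then invokes the \cite{BBGK73} argument verbatim that $\mathbb{Z}$ (hence $\mathbb{Z}^n$) does not elementarily embed into $\bigoplus_p \mathbb{Z}_{(p)}$ (hence not into $(G^\ast)^n$), because any element of $\bigoplus_p \mathbb{Z}_{(p)}$ lies in a subgroup on which all but finitely many $\mathbb{Z}_{(p)}$-components vanish, forcing it to be divisible by all but finitely many primes, contradicting the $1$-type of a generator of $\mathbb{Z}$.

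To make the reduction from the semidirect product to the abelian part rigorous, I would note: (i) any elementary embedding $f: P \to M$ must carry $\phi(P)$ into $\phi(M)$ since $\phi$ is $\emptyset$-definable, and the restriction $f \restriction \phi(P) : \phi(P) \to \phi(M)$ is then an elementary embedding of abelian groups — here one must check that the \emph{induced} structure from the ambient group on the definable subgroup $\phi(P)$ refines the pure group structure enough that elementarity downstairs suffices, but since multiplication in $W$ restricted to $\phi(W) \times \phi(W)$ is just the abelian group operation, an elementary map of the ambient structures restricts to an elementary map of the abelian subgroups in the pure language of groups; (ii) $\phi(P) \cong G_0^n$ with $G_0$ f.g., so $G_0 \cong \mathbb{Z}$ and $\phi(P) \cong \mathbb{Z}^n$. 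Combining (i), (ii) with the previous paragraph: $P$ prime would give an elementary embedding $\mathbb{Z}^n \hookrightarrow (G^\ast)^n$ in the language of groups, contradicting \cite{BBGK73}. Hence $\mathrm{Th}(W)$ has no prime model.

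The hard part, and the place to be careful, is exactly point (i): justifying that elementarity of $f$ on the ambient groups $P$ and $M$ descends to elementarity of $f\restriction\phi(P)$ as a map of the abelian groups $\phi(P) \to \phi(M)$ \emph{in the language of abelian groups}. This needs that every first-order formula $\psi(\bar x)$ in the language of abelian groups can be relativized, uniformly, to a formula $\psi^\phi(\bar x)$ in the language of groups that, on tuples from $\phi(M)$, expresses the same thing — which is routine since $\phi$ is a $\emptyset$-definable subgroup closed under the operations and the restriction of the group multiplication to it is abelian, but it must be said. Everything else is either quoted (Theorem~\ref{the_game_theorem}, Proposition~\ref{the_prop_def_transla}, Szmielew/the $\bigoplus_p \mathbb{Z}_{(p)} \equiv \mathbb{Z}$ fact, Oger) or is the \cite{BBGK73} divisibility argument reused without change.
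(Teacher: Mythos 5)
Your overall route is the paper's: use Proposition~\ref{the_prop_def_transla} to make the translation subgroup $\emptyset$-definable, use Theorem~\ref{the_game_theorem} together with $\mathbb{Z}\equiv\bigoplus_{p}\mathbb{Z}_{(p)}$ to produce the model $W^{\ast}=(\bigoplus_{p}\mathbb{Z}_{(p)})^{n}\rtimes_{\alpha}W_{0}$, and conclude with the divisibility obstruction of \cite{BBGK73}; your point (i) about relativizing elementarity to the definable subgroup is exactly the (implicit) step the paper also relies on. The genuine gap is in how you establish $\phi(P)\cong\mathbb{Z}^{n}$. You assert that ``we know exactly which groups arise: $M\cong G^{n}\rtimes_{\alpha}W_{0}$ for some $G\equiv\mathbb{Z}$'' for \emph{every} model $M$ of $\mathrm{Th}(W)$, citing Theorem~\ref{the_game_theorem} and Oger. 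But Theorem~\ref{the_game_theorem} only gives a sufficient condition for elementary equivalence (one direction), Oger's theorem concerns finitely generated groups only, and the cited results do not yield such a classification: all one knows about an arbitrary model is that its $\phi$-part is a model of the relativized $\mathrm{Th}(\mathbb{Z}^{n})$, and models of $\mathrm{Th}(\mathbb{Z}^{n})$ need not be $n$-th powers of models of $\mathrm{Th}(\mathbb{Z})$ (for $n\geq 2$, $\mathbb{Z}^{n}\oplus\mathbb{Q}\equiv\mathbb{Z}^{n}$ is not of the form $G_{0}^{n}$, since its divisible part has rank one). Likewise your opening claim that the prime model $P$ is finitely generated is not justified as stated: an elementary substructure of a f.g. group need not be f.g. in general, and your fallback (``forced to be abelian-by-finite of the right shape'') leans on the same unproved classification. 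Since your step (ii) rests on these claims, the identification $\phi(P)\cong\mathbb{Z}^{n}$ does not go through as written.

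The repair is short and is precisely what the paper does. You already noted that $P$, being prime, embeds elementarily into $W$; apply your own point (i) to \emph{that} embedding to see that $\phi(P)$ embeds elementarily (as an abelian group) into $\phi(W)=\mathbb{Z}^{n}$. Then either invoke that $\mathbb{Z}^{n}$ has no proper elementary subgroup, so $\phi(P)\cong\mathbb{Z}^{n}$, or argue even more cheaply: pick a nonzero $a\in\phi(P)$ (such exists since $\exists x(\phi(x)\wedge x\neq 1)$ belongs to $\mathrm{Th}(W)$); its image in $\mathbb{Z}^{n}$ fails to be divisible by all but finitely many primes, each such non-divisibility (relativized to $\phi$) transfers back to $a$, and then the elementary embedding $P\to W^{\ast}$ sends $a$ to a nonzero element of $\phi(W^{\ast})=(\bigoplus_{p}\mathbb{Z}_{(p)})^{n}$ failing divisibility by infinitely many primes, contradicting the fact that every nonzero element there is divisible by all but finitely many primes. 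With that substitution your argument coincides with the paper's proof.
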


\begin{proof} By Proposition \ref{the_prop_def_transla} there exists a first-order formula $\varphi(x)$ such that $\varphi(W)$ defines a subgroup of $W$ isomorphic to $\mathbb{Z}^n$. Therefore, as a structure, $(\varphi(W),\cdot)$, it satisfies the axioms of $\mrm{Th}(\mathbb{Z}^n)$. Since $\varphi(x)$ is defined over the $\emptyset$, we get that for every model $G$ of $\mrm{Th}(W)$, $(\varphi(G),\cdot) \models \mrm{Th}(\mathbb{Z}^n)$. For the rest of the proof we will treat the solution set of $\phi(x)$ in any model of $\mrm{Th}(W)$ as a model of $\mrm{Th}(\mathbb{Z}^n)$.

\smallskip
\noindent We assume for a contradiction that $\mrm{Th}(W)$ admits a prime model, say $\mathcal{A}$. Then $\mathcal{A}$ elementarily embeds into $W$ and consequently $\varphi(\mathcal{A})$ elementarily embeds into $\mathbb{Z}^n$. The latter implies that $\varphi(\mathcal{A})$ is isomorphic to $\mathbb{Z}^n$, as $\mathbb{Z}^n$ is strictly minimal, i.e. it does not have a proper elementary subgroup. By Theorem \ref{the_game_theorem}, since $\mathbb{Z}\equiv \bigoplus_{p \in \mathbb{P}} \mathbb{Z}_{(p)}$, we get that $H:=(\bigoplus_{p \in \mathbb{P}} \mathbb{Z}_{(p)})^n \rtimes_\alpha W_0$ is a model of $\mrm{Th}(W)$. By Proposition \ref{the_prop_def_transla} $\varphi(H)$ is isomorphic to $(\bigoplus_{p \in \mathbb{P}} \mathbb{Z}_{(p)})^n$ and consequently $\varphi(\mathcal{A})$ elementarily embeds into $(\bigoplus_{p \in \mathbb{P}} \mathbb{Z}_{(p)})^n$.

\smallskip
\noindent Finally, the element $(1, ..., 1) \in \mathbb{Z}^n$ is not divisible by any prime, but each non-zero element of $(\bigoplus_{p \in \mathbb{P}} \mathbb{Z}_{(p)})^n$ is divisible by infinitely many primes. In particular, $\mathcal{A}$ does not embed in $(\bigoplus_{p \in \mathbb{P}} \mathbb{Z}_{(p)})^n \rtimes_\alpha W_0$ elementarily.  
\end{proof}

\subsection{Rigidity}

The main results of this section are that irreducible affine Coxeter groups are first-order and profinitely rigid. 

\begin{proposition}\label{rigidity_prop} Let $W = \mathbb{Z}^n \rtimes_{\alpha} W_0$, where $W_0$ is finite and $\alpha:W_0\rightarrow GL_n(\mathbb{Z})$ injective.
Let $H$ be a finitely generated group elementarily equivalent to $W$. Then $H \cong \mathbb{Z}^n \rtimes_{\beta} W_0$ where $\beta:W_0\rightarrow \mathbb{Z}^n$ is injective. 
%
\end{proposition}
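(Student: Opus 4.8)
The plan is to leverage the $\emptyset$-definability of the translation subgroup together with the fact that $H \equiv W$, so that every first-order property of the pair $(W,\phi(W))$ transfers verbatim to the pair $(H,\phi(H))$. Set $m = |W_0|$ and let $\phi(x) := \forall y\, ([x,y^m]=1)$, so that by Proposition~\ref{the_prop_def_transla} we have $\phi(W) = \mathbb{Z}^n$ (the translation subgroup). The first step is to record a short list of first-order sentences, each true in $W$ and hence in $H$, witnessing that $N := \phi(H)$ is: (i) a subgroup (closure under $\cdot$, ${}^{-1}$, and $1\in\phi$); (ii) normal ($\forall g\forall x(\phi(x)\to\phi(gxg^{-1}))$); (iii) abelian; (iv) torsion-free (one sentence ``$\phi(x)\wedge x^p=1\to x=1$'' for each prime $p$); (v) of index $\le m$ ($\exists g_1\cdots g_m\,\forall x\,\bigvee_i\phi(g_i^{-1}x)$); (vi) self-centralizing in $H$, i.e. $\forall g\,[(\forall x(\phi(x)\to gxg^{-1}=x))\to\phi(g)]$, which holds in $W$ since a direct computation gives $C_W(\mathbb{Z}^n)=\mathbb{Z}^n$ using injectivity of $\alpha$; and (vii) complemented by a copy of $W_0$. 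For (vii), since $W_0=\{w_1=e,\dots,w_m\}$ is a \emph{fixed} finite group with multiplication table $w_iw_j=w_{k(i,j)}$, the sentence
$$\exists h_1\cdots h_m\Bigl(h_1=1\ \wedge\ \bigwedge_{i,j} h_ih_j=h_{k(i,j)}\ \wedge\ \bigwedge_{i\ne j}\lnot\phi(h_i^{-1}h_j)\Bigr)$$
expresses exactly that there are elements forming a subgroup $K$ on which $w_i\mapsto h_i$ is a homomorphism $W_0\to K$, necessarily injective because the $h_i$ lie in distinct cosets of $N$; in $W$ it is witnessed by $\{(0,w_i):i\le m\}$.

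The second step is to assemble these facts. Since $H$ is finitely generated and $N$ has finite index by (v), $N$ is finitely generated; being abelian (iii) and torsion-free (iv) it is isomorphic to $\mathbb{Z}^k$ for some $k$. From (vii) we obtain $K\le H$ with $K\cong W_0$ and $K\cap N=1$; since $|K|=m$ and $K$ meets $m$ distinct cosets of $N$ while $[H:N]\le m$, the cosets of $K$ exhaust $H/N$, so $K$ is a transversal and $H=NK$. By (ii) and normality of $N$, this exhibits $H \cong \mathbb{Z}^k \rtimes_\beta W_0$, where $\beta\colon W_0 \to \mathrm{Aut}(\mathbb{Z}^k)=GL_k(\mathbb{Z})$ is the conjugation action of $K$ on $N$.

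The third step is injectivity of $\beta$ and the computation of $k$. The kernel of the conjugation action of $H$ on $N$ is $C_H(N)$, which equals $N$ by (vi) (the inclusion $C_H(N)\supseteq N$ is immediate from (iii)); hence $H/N$ acts faithfully on $N$, and restricting to $K$, which meets $N$ trivially, shows $\beta$ is injective. Finally, relativizing group sentences by $\phi$ and using $H\equiv W$ yields $\phi(H)\equiv\phi(W)$, i.e. $\mathbb{Z}^k\equiv\mathbb{Z}^n$; since the index $[\,\mathbb{Z}^r : p\mathbb{Z}^r\,]=p^r$ is first-order expressible (for any prime $p$), this forces $k=n$, completing the argument.

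There is no deep geometric or combinatorial input here: the whole proof is a matter of writing down the correct first-order sentences and transferring them across $H\equiv W$. Accordingly, the only real care is needed in (a) checking that ``the quotient is the specific finite group $W_0$'' and ``the extension splits'' are genuinely first-order — which works precisely because $W_0$ is finite and fixed, so its multiplication table can be written out as finitely many literal equalities — and (b) verifying the witnesses in $W$, which is immediate from the internal semidirect product description $W=\mathbb{Z}^n\rtimes W_0$. I would flag the splitting sentence (vii) as the one step deserving attention, since one must insist on the relations $h_ih_j=h_{k(i,j)}$ holding \emph{on the nose} in $H$ rather than merely modulo $N$.
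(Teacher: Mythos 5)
Your proof is correct, and it follows the same overall strategy as the paper: write down first-order conditions, verify them in $W$, and transfer them across $H\equiv W$ to reconstruct $H$ as a split extension of a finitely generated torsion-free abelian group of rank $n$ by a faithfully acting copy of $W_0$. The implementation differs in where the abelian normal subgroup comes from and how its properties are certified. The paper's proof does not invoke Proposition~\ref{the_prop_def_transla} at this stage: it existentially quantifies a basis $y_1,\dots,y_n$ together with the elements $z_1,\dots,z_k$ of $W_0$, works with the centralizer $Z(\bar y)$ (shown to equal $\mathbb{Z}^n\times\{e\}$ in $W$ via injectivity of $\alpha$), expresses torsion-freeness through the global torsion-bound scheme $\varphi_n$ combined with clause (4), gets the rank from $|Z/2Z|=2^n$, and gets faithfulness from clause (6) (each nontrivial $z_i$ moves some element of $Z$). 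You instead anchor everything on the $\emptyset$-definable translation subgroup $\phi(H)$, express torsion-freeness of $N=\phi(H)$ prime by prime, deduce faithfulness from the transferred self-centralization $C(N)=N$ (which in $W$ is again exactly the injectivity-of-$\alpha$ computation), and recover the rank by relativizing to $\phi$ and comparing indices of $p$-th powers --- the same counting idea as the paper's clause (5). Both routes are sound; yours trades the paper's existential witnesses for a basis against the extra splitting sentence (vii), and your closing caveat is exactly the right one: the relations $h_ih_j=h_{k(i,j)}$ must hold on the nose, since relations modulo $N$ would only give an extension with quotient $W_0$, not a complement, and the clauses $\lnot\phi(h_i^{-1}h_j)$ are what force the $h_i$ into distinct cosets so that, together with the index bound (v), they form a transversal.
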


\begin{proof} Let $W_0 = \{w_1, ..., w_k\}$ (with $w_1=e$) and, for every $1\leq i\leq k$, let $M_i = (a^i_{j \ell})$ be the matrix in $GL_n(\mathbb{Z})$ which is the image of $w_i$.
Notice that as $W_0$ is assumed to be finite and since $\mathbb{Z}^n$ is torsion-free  we have that there is a bound on the torsion elements of $W$. Indeed, if the order of $w\in W_0$ is $k$, and $(\bar b,w)^n=(0,e)$, then $k$ divides $n$, say $k\cdot\lambda=n$, and $(\bar b,w)^n=(\lambda(\bar b+\alpha(w)(\bar b)+\ldots +\alpha(w^{k-1})(\bar b)),w^n)$, therefore, since $\lambda(\bar b+\alpha(w)(\bar b)+\ldots+ \alpha(w^{k-1})(\bar b))=0$, we get, by torsion-freeness, $\bar b+\alpha(w)(\bar b)+\ldots+ \alpha(w^{k-1})(\bar b)=0$. Therefore, the order of $(\bar b, w)$ is $k$. Now, let $m < \omega$ be the exponent of $W_0$, i.e. the smallest positive integer such that $w^m=e$ for every  $w \in W_0$. Then, by the previous argument, this $m$ serves as a bound on the orders of torsion elements in $W$. Let $\psi_W(y_1, ..., y_n, z_1, ..., z_k)$ be the first-order formula (over the $\emptyset$) expressing the following:
\ \\
	\begin{enumerate}[(1)]
	\item $\{z_1, ..., z_k\}$ is a subgroup isomorphic to $W_0$ with $z_1=e$;
	\item the centralizer of $\{y_1, ..., y_n\}$, $Z(y_1, ..., y_n)$, is an abelian subgroup;
	\item the whole group is the semidirect product $Z(y_1, ..., y_n) \rtimes \{z_1, ..., z_k\}$, i.e. the group $Z(y_1, ..., y_n)$ is a normal subgroup and for every element $x$ there are unique $y\in Z(y_1, ..., y_n)$ and unique $z\in \{z_1, ..., z_k\}$ such that $x=yz$;
	\item for every $1\leq \ell \leq m$, if $x \in Z(y_1, ..., y_n)\setminus\{e\}$, then $x^\ell \neq e$;
	\item $|Z(y_1, ..., y_n)/2Z(y_1, ..., y_n)| = 2^n$;
	\item for every $2\leq i \leq k$ there is $h \in Z(y_1, ..., y_n)$ such that $z_ihz_i^{-1} \neq h$. 
	\end{enumerate}
\ \\ 
Recall that $m$ was fixed at the beginning of the proof. For $n < \omega$, let $\varphi_n$ be the first-order formula saying ``if $x^n = e$, then $x^\ell = e$ for some $\ell \leq m$". 

\smallskip
\noindent{\bf Claim:} If we let:
$$\chi = \exists y_1, ..., y_n \exists z_1, ..., z_k \bigl(\psi_W(y_1, ..., y_n, z_1, ..., z_k)\bigr),$$
then we have that:
$$W \models \chi \text{ and } W \models \varphi_n, \text{ for every $m < n < \omega$}.$$
\noindent{\bf Proof (of Claim):}
We only need to show that if we let $\{y_1, ..., y_n\}$ be a free basis of $\mathbb{Z}^n$, then the centralizer $Z(y_1, ..., y_2)$ in $W = \mathbb{Z}^n \rtimes_{\alpha} W_0$, is the subgroup $ \mathbb{Z}^n\times\{e\}$. Indeed, let $(a, w) \in \mathbb{Z}^n \rtimes_{\alpha} W_0$, $b \in \mathbb{Z}^n$ and suppose that:
$$(a, w)(b, e) = (a + \alpha(w)(b), w) = (b + a, w) = (b, e)(a, w),$$
Then $a + \alpha(w)(b) = b + a = a + b$ and so $\alpha(w)(b) = b$. Hence, if $(a, w) \in \mathbb{Z}^n \rtimes_{\alpha} W_0$ commutes with $(y_i, e)$, for every $1\leq i \leq n$,  then $\alpha(w)(y_i) = y_i$, for every $1\leq i \leq n$, but as $\{y_1, ..., y_n\}$ is a basis of $\mathbb{Z}^n$ necessarily $\alpha(w) = \mrm{id}_{\mathbb{Z}^n}$. Since $\alpha$ is injective we get that $w=e$, and we can conclude. \qed

\smallskip \noindent
Suppose now that $H$ is finitely generated and elementarily equivalent to $W$. Then:
$$H \models \chi \text{ and } H \models \varphi_n, \text{ for every $m < n < \omega$}.$$
Using this it is easy to see then that $\mathbb{Z}^n \rtimes_{\beta} W_0$ for some $\beta: W_0 \rightarrow \mrm{Aut}(\mathbb{Z}^n)$. Indeed, $H$, from property (3) is the semidirect product of an Abelian (normal) subgroup $A$ (property (2)) with a finite subgroup $F$ isomorphic to $W_0$ (property (1)). Since $H$ is finitely generated and $A$ is normal of finite index in $H$, we get that $A$ is finitely generated. Now from property (4) and $\varphi_n$ we get that $A$ is torsion-free and finally from property (5) we get that it is isomorphic to $\mathbb{Z}^n$. Hence $H$ is isomorphic to a semidirect product of the form $\mathbb{Z}^n \rtimes_{\beta} W_0$. Lastly, property (6) implies that  $\beta:W_0\rightarrow \mathbb{Z}^n$ is injective. 

\smallskip

\end{proof}


We will use the above proposition and known results in order to show that profinite rigidity is equivalent to first-order rigidity for any irreducible affine Coxeter group.


%

Oger in \cite{Oger88} characterized elementary equivalence of abelian-by-finite f.g. groups through their profinite completions. 

\begin{fact}[Oger]\label{Oger}
Let $G,H$ be f.g. Abelian-by-finite groups. Then $G\equiv H$ if and only if $\widehat{G}\cong \widehat{H}$.    
\end{fact}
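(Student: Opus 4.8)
The plan is to translate \emph{both} conditions, $G\equiv H$ and $\widehat{G}\cong\widehat{H}$, into the single assertion that two lattices lie in the same genus, in the sense of integral representation theory, and then observe that they meet. This is Oger's theorem, so I will only sketch the argument; for the groups of actual interest here, namely the split faithful case $\mathbb{Z}^n\rtimes_\alpha W_0$ with $\alpha$ injective (Fact~\ref{fact_affine}), everything below is considerably cleaner than in the stated generality. \emph{Step~1: reduction to the virtually free abelian case.} A f.g.\ abelian-by-finite group is polycyclic-by-finite, hence residually finite, and carries a uniform bound on the orders of its finite subgroups, exactly as in the proof of Proposition~\ref{rigidity_prop}. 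Killing the maximal finite normal subgroup and choosing a finite-index normal free abelian subgroup, and checking the routine fact that both ``$\equiv$'' and isomorphism of profinite completions are preserved and reflected under these operations, one reduces to $G\cong L\rtimes_\alpha Q$ with $L\cong\mathbb{Z}^n$ and $Q$ finite, and likewise $H\cong L'\rtimes_\beta Q'$ with $Q'$ finite. By Proposition~\ref{the_prop_def_transla} the subgroup $L$ is $\emptyset$-definable and $Q$ is interpretable, so $L$ --- regarded as a $\mathbb{Z}Q$-module via the conjugation action --- is $\emptyset$-interpretable in $G$; denote this module $L_\alpha$, and $L'_\beta$ the analogous module for $H$.

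\emph{Step~2: $G\equiv H$ is equivalent to $L_\alpha$ and $L'_\beta$ lying in the same genus.} One implication comes free from interpretability: $G\equiv H$ forces $Q\cong Q'$ (recovered up to isomorphism from the first-order data isolated in Proposition~\ref{rigidity_prop}) and forces the interpreted $\mathbb{Z}Q$-modules to be elementarily equivalent. For the converse one reconstructs an elementary equivalence of the groups from one of the modules together with the (finite, hence first-order) data of $Q$ and of the extension cocycle; this is where finite generation and the torsion bound are used, and it runs along the same Ehrenfeucht--Fra\"iss\'e lines as Theorem~\ref{the_game_theorem}. Finally, the Baur--Monk analysis of theories of modules (the module-theoretic extension of Szmielew's theorem on abelian groups) identifies the theory of a f.g.\ $\mathbb{Z}Q$-module with its family of numerical $\mrm{pp}$-invariants, and for a $\mathbb{Z}Q$-lattice these invariants are exactly the isomorphism types of the localisations $\mathbb{Z}_p\otimes L$ over the orders $\mathbb{Z}_pQ$ for all primes $p$ (using the Krull--Schmidt--Azumaya theorem, valid over these complete orders) --- that is, the genus of $L_\alpha$. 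Hence $G\equiv H$ if and only if $L_\alpha$ and $L'_\beta$ have the same genus.

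\emph{Step~3: $\widehat{G}\cong\widehat{H}$ is equivalent to the same genus condition.} Profinite completion commutes with the extension: since $L$ is finite index and $G$ residually finite, $\widehat{G}$ is an extension of $Q$ by $\overline{L}\cong\widehat{\mathbb{Z}}^n\cong\prod_p\mathbb{Z}_p\otimes L$ with the induced action (and it splits when $G$ does), and similarly for $H$. After reducing to the faithful case, where $L$ is the translation subgroup and hence characteristic, a continuous isomorphism $\widehat{G}\cong\widehat{H}$ restricts to an isomorphism $\prod_p\mathbb{Z}_p\otimes L\cong\prod_p\mathbb{Z}_p\otimes L'$ of $\widehat{\mathbb{Z}}Q$-modules respecting the $Q$-actions, which amounts prime by prime to $\mathbb{Z}_p\otimes L\cong\mathbb{Z}_p\otimes L'$ as $\mathbb{Z}_pQ$-modules; conversely such local isomorphisms assemble into a global one and, together with the matching extension classes, into the desired $\widehat{G}\cong\widehat{H}$. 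Thus $\widehat{G}\cong\widehat{H}$ if and only if $L_\alpha$ and $L'_\beta$ have the same genus, and combining with Step~2 gives the Fact.

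\emph{Main obstacle.} The heart of the matter is Step~2: making rigorous, uniformly across the non-split and non-faithful cases, that elementary equivalence of the \emph{groups} is equivalent to that of the interpreted \emph{modules} (the nontrivial direction reconstructing the group), and then invoking the precise form of the Baur--Monk invariants together with the representation-theoretic identification of the $\mrm{pp}$-invariants of a $\mathbb{Z}Q$-lattice with its genus. Steps~1 and~3 are comparatively soft bookkeeping about residual finiteness and about profinite completions of finite extensions.
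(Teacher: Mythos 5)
There is nothing in the paper to compare your argument against: Fact~\ref{Oger} is quoted as a black box from Oger \cite{Oger88} and is never proved here (the paper only \emph{uses} it, in Theorem~\ref{conclusion_equiv_semi}, in the special split faithful case). So what you have written is a sketch of Oger's theorem itself, and as such it has genuine gaps. The most concrete one is Step~1: a f.g.\ abelian-by-finite group does not reduce to a split extension $L\rtimes_\alpha Q$. Killing the maximal finite normal subgroup does not make the extension split --- the Klein bottle group is a torsion-free, fitting-free example which is a non-split extension of $\mathbb{Z}^2$ by $\mathbb{Z}/2$ --- and it is not ``routine'' that elementary equivalence and isomorphism of profinite completions are \emph{reflected} by passing to such quotients; matching the quotients on the two sides is part of what has to be proved. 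Likewise, Proposition~\ref{the_prop_def_transla} gives $\emptyset$-definability of the translation subgroup only when the kernel is torsion-free and the action faithful, so the interpretability of the $\mathbb{Z}Q$-module $L_\alpha$ that you rely on is not available in the generality in which you invoke it (for $G=\mathbb{Z}^n\times Q$ the finite-index free abelian subgroup is not even canonical).

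The second gap is the one you flag yourself: Step~2 is the entire content of the theorem, and your sketch asserts rather than proves both directions --- that elementary equivalence of the groups is equivalent to elementary equivalence of the interpreted modules together with the finite extension data (the reconstruction direction in the non-split, non-faithful cases is not an Ehrenfeucht--Fra\"iss\'e routine along the lines of Theorem~\ref{the_game_theorem}, which is written only for split actions by integer matrices), and that the Baur--Monk invariants of a f.g.\ $\mathbb{Z}Q$-lattice are equivalent to its genus. These identifications are essentially the substance of Oger's work (whose actual route passes through the intermediate condition $G\times\mathbb{Z}\cong H\times\mathbb{Z}$ and results on genera of lattices), not bookkeeping around it. Step~3 is fine in spirit and matches Fact~\ref{Grune} in the split case. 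So your proposal is a reasonable roadmap, but it is not a proof, and within this paper none is needed: the Fact is a citation.
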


We will also be needing the following fact from \cite{grune}.

\begin{fact}(\cite[Proposition 2.6]{grune})\label{Grune}
Let $N, H$ be finitely generated residually finite groups and $\alpha:H\rightarrow Aut(N)$ be a homomorphism. Then 
$$ \widehat{N\rtimes_{\alpha} H} \cong \widehat{N} \rtimes_{\hat{\alpha}} \widehat{H} $$

\end{fact}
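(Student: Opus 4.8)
The plan is to construct, via the universal property of profinite completion, a continuous homomorphism $\Phi\colon \widehat{N\rtimes_\alpha H}\to \widehat N\rtimes_{\hat\alpha}\widehat H$ together with a continuous homomorphism $\Psi$ in the opposite direction, and then to check on dense subsets that they are mutually inverse; since all the groups involved are compact Hausdorff, two mutually inverse continuous bijections automatically give a topological isomorphism. Throughout write $G=N\rtimes_\alpha H$, and let $\iota_N\colon N\to\widehat N$, $\iota_H\colon H\to\widehat H$, $\iota_G\colon G\to\widehat G$ be the canonical maps.

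Before any of this can be stated one must make sense of $\widehat N\rtimes_{\hat\alpha}\widehat H$ as a genuine profinite group, and this is where the hypotheses enter. Because $N$ is finitely generated, $\widehat N$ is topologically finitely generated, so its group of topological automorphisms carries a natural profinite topology and acts continuously on $\widehat N$ (the characteristic open subgroups of $\widehat N$ are cofinal among the open subgroups, being finite intersections of the finitely many open subgroups of a given index; alternatively one may invoke Nikolov--Segal). Functoriality of completion turns $\alpha\colon H\to\mathrm{Aut}(N)$ into a homomorphism $H\to\mathrm{Aut}(\widehat N)$, and the universal property of $\widehat H$ extends this to a continuous homomorphism $\hat\alpha\colon\widehat H\to\mathrm{Aut}(\widehat N)$. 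Hence $\widehat N\rtimes_{\hat\alpha}\widehat H$ is profinite, with underlying space $\widehat N\times\widehat H$, so that $\iota_N(N)\cdot\iota_H(H)$ is dense in it. (Equivalently one may describe $\widehat N\rtimes_{\hat\alpha}\widehat H$ as $\varprojlim (N/N_i)\rtimes (H/H_j)$ over a cofinal system of pairs with $N_i\trianglelefteq N$ characteristic of finite index and $H_j\trianglelefteq H$ of finite index acting on $N/N_i$; the existence of such a cofinal system is again exactly what finite generation of $N$ provides.)

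Now for the two maps. The assignment $(n,h)\mapsto(\iota_N(n),\iota_H(h))$ is a homomorphism $G\to\widehat N\rtimes_{\hat\alpha}\widehat H$ because $\hat\alpha$ extends $\alpha$, and since the target is profinite it factors through a continuous $\Phi\colon\widehat G\to\widehat N\rtimes_{\hat\alpha}\widehat H$. In the other direction, the restrictions of $\iota_G$ to the subgroups $N$ and $H$ of $G$ extend to continuous homomorphisms $j_N\colon\widehat N\to\widehat G$ and $j_H\colon\widehat H\to\widehat G$; the map $(x,y)\mapsto j_N(x)j_H(y)$ from $\widehat N\times\widehat H$ to $\widehat G$ is continuous, and it is a homomorphism out of $\widehat N\rtimes_{\hat\alpha}\widehat H$ because the identity $j_H(y)j_N(x)j_H(y)^{-1}=j_N(\hat\alpha(y)(x))$ holds for $x\in N$, $y\in H$ by the defining relations of $G$ and therefore for all $x\in\widehat N$, $y\in\widehat H$ by density and continuity; call it $\Psi$. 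Finally $\Psi\circ\Phi$ fixes the dense subgroup $\iota_G(G)$ of $\widehat G$, and $\Phi\circ\Psi$ fixes each element of $\iota_N(N)\cup\iota_H(H)$ and hence, being a continuous homomorphism, fixes the dense subgroup they generate; so both composites are the identity and $\Phi$ is the asserted isomorphism.

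The main obstacle is entirely concentrated in the second step: one needs to know that $\hat\alpha$ is \emph{continuous}, so that $\widehat N\rtimes_{\hat\alpha}\widehat H$ is profinite at all, and, dually, that $j_N\colon\widehat N\to\widehat G$ is injective, i.e.\ that the profinite topology of $G$ induces the full profinite topology of $N$. Both facts rely on $N$ and $H$ being finitely generated and residually finite, through the elementary principle that such a group has only finitely many subgroups of each finite index, so that every finite-index subgroup of $N$ contains an $\alpha(H)$-invariant one of finite index. Once this structural point is secured, everything else is routine density-and-continuity bookkeeping with universal properties.
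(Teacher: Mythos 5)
The paper does not prove this statement at all: it is quoted verbatim as Fact~\ref{Grune} from Grunewald--Zalesskii \cite[Proposition 2.6]{grune}, so there is no internal proof to compare against. Your argument is correct and is essentially the standard proof of that proposition: the only genuinely substantive point is the one you isolate, namely that finite generation of $N$ makes $\widehat N$ topologically finitely generated, so characteristic open subgroups are cofinal, $\mathrm{Aut}(\widehat N)$ is profinite, and $\hat\alpha$ exists as a continuous homomorphism making $\widehat N\rtimes_{\hat\alpha}\widehat H$ a profinite group; after that, the construction of $\Phi$ and $\Psi$ and the density-and-continuity check that they are mutually inverse is exactly the routine bookkeeping you describe. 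Two minor remarks: injectivity of $j_N$ (equivalently, that the profinite topology of $G$ induces the full profinite topology on $N$) is not needed as an input to your argument --- it falls out as a consequence of $\Phi$ and $\Psi$ being mutually inverse --- and residual finiteness of $N$ and $H$ is likewise not used anywhere in the completion argument itself (it matters in the paper only because the groups are to be compared inside their profinite completions), so your proof in fact establishes the statement under the weaker hypothesis that $N$ is finitely generated.
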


%

Using the above facts together with Proposition \ref{rigidity_prop} we get.

\begin{theorem}\label{conclusion_equiv_semi} Let $G \cong \mathbb{Z}^n \rtimes_\alpha W_0$, where  $\alpha:W_0\rightarrow GL_n(\mathbb{Z})$ injective. Let $H$ be a finitely generated residually finite group. TFAE:
	\begin{enumerate}[(1)]
	\item $H$ is elementarily equivalent to $G$;
	\item $H$ and $G$ have isomorphic profinite completions;
	\end{enumerate}
\end{theorem}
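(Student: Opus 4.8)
The plan is to funnel both implications through Oger's characterisation (Fact \ref{Oger}): for finitely generated abelian-by-finite groups, elementary equivalence is the same as having isomorphic profinite completions. Since $G \cong \mathbb{Z}^n \rtimes_\alpha W_0$ is visibly finitely generated and abelian-by-finite, the entire content of the theorem is to verify, under each of the two hypotheses separately, that $H$ too is (finitely generated) abelian-by-finite; once that is established, each direction is an immediate instance of Fact \ref{Oger} applied to the pair $G,H$.

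For $(1)\Rightarrow(2)$: assume $H\equiv G$ with $H$ finitely generated. Then Proposition \ref{rigidity_prop} applies verbatim (its hypotheses are exactly those on $G$ here), so $H\cong\mathbb{Z}^n\rtimes_\beta W_0$ for some injective $\beta$; in particular $H$ is finitely generated abelian-by-finite. Now Fact \ref{Oger}, applied to $G$ and $H$, converts $G\equiv H$ into $\widehat G\cong\widehat H$.

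For $(2)\Rightarrow(1)$: assume $\widehat H\cong\widehat G$. First compute the right-hand side with Fact \ref{Grune}, which applies because $\mathbb{Z}^n$ and the finite group $W_0$ are finitely generated and residually finite: $\widehat G\cong\widehat{\mathbb{Z}^n}\rtimes_{\hat\alpha}\widehat{W_0}\cong\hat{\mathbb{Z}}^n\rtimes_{\hat\alpha}W_0$, which contains the open normal abelian subgroup $\hat{\mathbb{Z}}^n$ (of index $|W_0|$). Transporting this subgroup along the isomorphism $\widehat H\cong\widehat G$ gives an open, hence finite-index, abelian subgroup $U\leq\widehat H$. Since $H$ is residually finite it embeds densely in $\widehat H$, and the coset map $H/(H\cap U)\hookrightarrow\widehat H/U$ shows $H\cap U$ has finite index in $H$; being a subgroup of the abelian $U$, it is abelian. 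Replacing it by its normal core in $H$ (a finite intersection of conjugates, each abelian) we conclude $H$ is finitely generated abelian-by-finite. Fact \ref{Oger} applied to $G,H$ then turns $\widehat G\cong\widehat H$ back into $G\equiv H$.

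The only step that is not bookkeeping is the one in $(2)\Rightarrow(1)$ that upgrades the purely profinite statement ``$\widehat H$ is virtually abelian'' to the abstract statement ``$H$ is abelian-by-finite''; I expect this to be the main (mild) obstacle, and the density-of-$H$-in-$\widehat H$ argument above is the cleanest route. One could alternatively observe that every finite quotient of $H$ is a finite quotient of $\widehat G$ and hence has an abelian subgroup of index dividing $|W_0|$, then invoke a uniform-bound argument for finitely generated groups; the embedding argument seems more economical and self-contained.
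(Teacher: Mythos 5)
Your proposal is correct and follows essentially the same route as the paper: Proposition \ref{rigidity_prop} plus Fact \ref{Oger} for $(1)\Rightarrow(2)$, and Fact \ref{Grune} together with the embedding of the residually finite $H$ into $\widehat{H}\cong\widehat{G}\cong\hat{\mathbb{Z}}^n\rtimes_{\hat\alpha}W_0$ to see that $H$ is abelian-by-finite, followed again by Fact \ref{Oger}, for $(2)\Rightarrow(1)$. Your explicit intersection-with-$U$ and normal-core step merely spells out the detail the paper leaves implicit in ``In particular, $H$ is Abelian-by-finite as well.''
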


\begin{proof}
$(1) \Rightarrow (2)$.  Since $H$ is elementarilly equivalent to $G$ we get, by Proposition \ref{rigidity_prop}, that $H$ is Abelian-by-finite as well. Thus, by Fact \ref{Oger}, $H$ and $G$ have isomorphic profinite completions. We note that in this direction we do not use the assumption that $H$ is residually finite.

\smallskip \noindent $(2) \Rightarrow (1)$. Since $H$ is residually finite it embeds in its profinite completion. But its profinite completion, by Fact \ref{Grune},  is isomorphic to $\widehat{\mathbb{Z}^n}\rtimes_{\hat{\alpha}}W_0$ (note that $\widehat{W_0}=W_0$, for $W_0$ is finite). In particular, $H$ is Abelian-by-finite as well. By Fact \ref{Oger} $H$ is elementarily equivalent to $G$.  
\end{proof}

It follows immediately that:

\begin{corollary}\label{RigProf}
Let $G$ be an irreducible affine Coxeter group. Then $G$ is first-order rigid if and only if it is profinitely rigid.
\end{corollary}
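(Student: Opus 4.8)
The plan is to read this off from Theorem~\ref{conclusion_equiv_semi} after putting $G$ into the right normal form. First I would invoke Fact~\ref{fact_affine}: since $G$ is an irreducible affine Coxeter group, $G \cong \mathbb{Z}^n \rtimes_\alpha W_0$ with $W_0$ finite and the natural action $\alpha\colon W_0 \to GL_n(\mathbb{Z})$ injective. This is precisely the standing hypothesis of Theorem~\ref{conclusion_equiv_semi} (and of Proposition~\ref{rigidity_prop}). I would also record at the outset that $G$ is residually finite --- e.g. because Coxeter groups are residually finite, or simply because $G$ is finitely generated abelian-by-finite --- so that $G$ itself lies in the class of groups to which the definition of profinite rigidity applies.

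Next I would prove the two implications separately, both of which are now short. For the direction ``profinitely rigid $\Rightarrow$ first-order rigid'': let $H$ be a finitely generated group with $H \equiv G$. By Proposition~\ref{rigidity_prop}, $H \cong \mathbb{Z}^n \rtimes_\beta W_0$ for some injective $\beta$; in particular $H$ is finitely generated abelian-by-finite, hence residually finite, so $H$ is admissible input for profinite rigidity. By the implication $(1)\Rightarrow(2)$ of Theorem~\ref{conclusion_equiv_semi}, $\widehat{H} \cong \widehat{G}$, and profinite rigidity of $G$ then gives $H \cong G$.

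For the converse ``first-order rigid $\Rightarrow$ profinitely rigid'': let $H$ be finitely generated and residually finite with $\widehat{H} \cong \widehat{G}$. By the implication $(2)\Rightarrow(1)$ of Theorem~\ref{conclusion_equiv_semi}, $H \equiv G$, and first-order rigidity of $G$ then gives $H \cong G$. This completes both directions.

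\textbf{Expected main obstacle.} There is essentially no substantial obstacle left: all the real content has been absorbed into Theorem~\ref{conclusion_equiv_semi} (which in turn rests on Oger's Fact~\ref{Oger}, Fact~\ref{Grune}, and Proposition~\ref{rigidity_prop}). The only points needing a line of care are bookkeeping ones: verifying that $G$ is residually finite so that profinite rigidity is even meaningful for it, and verifying in the first direction that an $H$ with $H\equiv G$ is automatically residually finite --- both of which are immediate from Fact~\ref{fact_affine} and Proposition~\ref{rigidity_prop}.
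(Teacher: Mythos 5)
Your proof is correct and is essentially the paper's argument: the corollary is stated there as an immediate consequence of Theorem~\ref{conclusion_equiv_semi} combined with the decomposition from Fact~\ref{fact_affine}, which is exactly what you do. Your added remarks on residual finiteness of $G$ and of any $H\equiv G$ are the right bookkeeping and match how the equivalence is meant to be applied.
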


\begin{definition} Given a finite group $W_0$ we denote by $\mathbb{Z}[W_0]$ the group ring of $W_0$ over the ring of integers. 
A $\mathbb{Z}[W_0]$-lattice is a $\mathbb{Z}[W_0]$-module which is, in addition, finitely generated and $\mathbb{Z}$-torsion free. 
\end{definition}

We observe that for a semidirect product $\mathbb{Z}^n\rtimes W_0$, with $W_0:=\{w_1,\ldots, w_m\}$ finite, the subgroup $\mathbb{Z}^n$ is a $\mathbb{Z}[W_0]$-module. Indeed, we may define the scalar multiplication $r.\bar{a}=(k_1w_1+\cdots +k_mw_m).\bar{a}=k_1\bar{a}^{w_1}+\cdots+k_m\bar{a}^{w_m}$, for any $r\in \mathbb{Z}[W_0]$ and any $\bar{a}\in\mathbb{Z}^n$. Moreover, since $\mathbb{Z}^n$ is finitely generated and $\mathbb{Z}$-torsion free, it is a $\mathbb{Z}[W_0]$-lattice. 
The advantage of working with $\mathbb{Z}[W_0]$-modules is that, in some sense, they encode the action of $W_0$ on $\mathbb{Z}^n$. The proof of the following lemma is left to the reader.

\begin{lemma}\label{MultipAction}
Let $G$ be isomorphic to the semidirect product $\mathbb{Z}^n\rtimes W_0$, with $W_0$ finite and $A$ the corresponding $\mathbb{Z}[W_0]$-lattice. Let $B$ be a $\mathbb{Z}[W_0]$-module isomorphic to $A$ (as $\mathbb{Z}[W_0]$-modules). Then $B\rtimes_\alpha W_0$, with $\alpha:W_0\rightarrow \mrm{Aut}(B)$ given by $w\mapsto (b\mapsto w.b)$ for any $b\in B$ is isomorphic to $G$. In particular, if $m$ is a positive integer and $m\mathbb{Z}^n:=\{ma \ | \ a\in \mathbb{Z}^n\}$, then  $\mathbb{Z}^n\rtimes W_0\cong m\mathbb{Z}^n\rtimes W_0$, for every $m<\omega$.  
\end{lemma}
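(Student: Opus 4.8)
The plan is to show that any $\mathbb{Z}[W_0]$-module isomorphism between the acting lattices lifts to a group isomorphism between the corresponding semidirect products, and then to exhibit such a module isomorphism in the special case $B = m\mathbb{Z}^n$.

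First I would fix a $\mathbb{Z}[W_0]$-module isomorphism $\varphi \colon A \to B$ and set $\Phi \colon A \rtimes W_0 \to B \rtimes_\alpha W_0$, $\Phi(a,w) = (\varphi(a), w)$. Since $\Phi$ is visibly a bijection (its inverse being $(b,w) \mapsto (\varphi^{-1}(b), w)$), it remains to check that it is a homomorphism. Writing the multiplication in $A \rtimes W_0$ as $(a_1,w_1)(a_2,w_2) = (a_1 + w_1.a_2,\, w_1w_2)$ and, using the definition of $\alpha$, the multiplication in $B \rtimes_\alpha W_0$ as $(b_1,w_1)(b_2,w_2) = (b_1 + w_1.b_2,\, w_1w_2)$, one computes $\Phi\big((a_1,w_1)(a_2,w_2)\big) = (\varphi(a_1) + w_1.\varphi(a_2),\, w_1w_2) = \Phi(a_1,w_1)\Phi(a_2,w_2)$, where the middle equality is exactly the additivity of $\varphi$ together with its $W_0$-equivariance. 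Hence $B \rtimes_\alpha W_0 \cong A \rtimes W_0 \cong G$.

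For the final assertion I would apply this with $A = \mathbb{Z}^n$ (carrying the $\mathbb{Z}[W_0]$-structure coming from the action of $W_0$) and $B = m\mathbb{Z}^n$. Because $W_0$ acts $\mathbb{Z}$-linearly, $m\mathbb{Z}^n$ is a $\mathbb{Z}[W_0]$-submodule of $\mathbb{Z}^n$ and the map $a \mapsto ma$ is a $\mathbb{Z}[W_0]$-homomorphism $\mathbb{Z}^n \to m\mathbb{Z}^n$; it is onto by the definition of $m\mathbb{Z}^n$ and injective since $\mathbb{Z}^n$ is torsion free, hence an isomorphism of $\mathbb{Z}[W_0]$-modules. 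The first part then yields $\mathbb{Z}^n \rtimes W_0 \cong m\mathbb{Z}^n \rtimes W_0$.

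The argument is essentially bookkeeping; the only point that requires care is matching conventions, i.e. making sure the action $\alpha$ on $B$ is precisely the one induced by the module structure (Convention~\ref{semi_convention}), so that the two multiplication formulas line up and the computation above closes. This is exactly why the lemma is phrased module-theoretically, and so there is no genuine obstacle.
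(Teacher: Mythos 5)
Your proof is correct and is precisely the standard argument the authors had in mind when they left the proof to the reader: a $\mathbb{Z}[W_0]$-module isomorphism $\varphi$ induces the group isomorphism $(a,w)\mapsto(\varphi(a),w)$ of semidirect products, and multiplication by $m$ (for $m\geq 1$) is such an isomorphism $\mathbb{Z}^n\to m\mathbb{Z}^n$ by $\mathbb{Z}$-linearity of the action and torsion-freeness. No gaps; the convention-matching point you flag at the end is exactly the one worth checking.
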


\medskip  We will work in the more general context of crystallographic space groups, which we now define. 

\begin{definition}(\cite[Definition 2.3.10]{plesken})
A space group $G$ is a group that satisfies the following conditions:
\begin{enumerate}[(1)]
\item $G$ admits a short exact sequence $A\rightarrow G\rightarrow W_0$ with $A$ Abelian and $W_0$ finite.
\item $W_0$ acts faithfully on $A$ as follows: for $w$ in $W_0$ we let some (any) pre-image of $w$ in $G$ act on (the image of) $A$ by conjugation.
\end{enumerate}
If, in addition, $A$ is free Abelian of finite rank, we call $G$ a crystallographic space group.
Moreover, the image of $A$ in $G$ is called the translation subgroup of $G$ and the image of $W_0$ in $Aut(A)$ is called the point group.    
\end{definition}

It is straightforward that irreducible affine Coxeter groups are crystallographic space groups. 
 	
\begin{definition}\label{same_genus} \
\begin{enumerate}[(i)]
    \item Let $S_1, S_2$ be crystallographic space groups and $T(S_1), T(S_2)$ their translation subgroups. Then $S_1, S_2$ belong to the same genus if $S_1/mT(S_1)\cong S_2/mT(S_2)$ for all $m<\omega$.
    \item Let $W_0$ be a finite group and $L_1, L_2$ be two $\mathbb{Z}[W_0]$-lattices. We say that $L_1$ and $L_2$ belong to the same genus (as $\mathbb{Z}[W_0]$-modules) if for every prime number $p$ and $k \in \mathbb{N}$ we have that $L_1/p^kL_1 \cong L_2/p^kL_2$ as $\mathbb{Z}[W_0]$-modules.
\end{enumerate}

\end{definition}
\begin{fact}(\cite[Exercise 1, p. 96]{plesken}\label{Exercise1}
Let $S_1, S_2$ be crystallographic space groups in the same genus. Let $T(S_1), T(S_2)$ be the corresponding  lattice groups and $W_1, W_2$ be the corresponding point groups. Then, after identifying their (isomorphic) point groups with $W$, $T(S_1)$ and $T(S_2)$ lie in the same genus as $\mathbb{Z}[W]$-lattices.     
\end{fact}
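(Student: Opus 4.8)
The plan is to unwind the two notions of ``same genus'' --- Definition~\ref{same_genus}(i) for the space groups and Definition~\ref{same_genus}(ii) for the translation lattices --- and to transport the isomorphism data from the finite quotients $S_i/mT(S_i)$ down to the finite quotients of $T(S_i)$. Since for $\mathbb{Z}[W]$-lattices the genus can be tested on prime-power moduli (by the Chinese Remainder Theorem), it suffices to fix a prime $p$ and an exponent $k \geq 1$ and to produce a $\mathbb{Z}[W]$-module isomorphism $T(S_1)/p^kT(S_1) \cong T(S_2)/p^kT(S_2)$. Write $G_i := S_i/p^kT(S_i)$ and $A_i := T(S_i)/p^kT(S_i)$, so that $A_i \trianglelefteq G_i$ is a normal abelian subgroup with $G_i/A_i \cong W_i$, and $A_i \cong (\mathbb{Z}/p^k)^{n}$ where $n = \mrm{rk}\,T(S_1) = \mrm{rk}\,T(S_2)$ --- the ranks agree since $|A_i| = |G_i|/|W_i|$ and already the instance $m = 1$ of the hypothesis gives $W_1 \cong S_1/T(S_1) \cong S_2/T(S_2) \cong W_2 =: W$.

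The heart of the argument is to show that an isomorphism $\phi : G_1 \xrightarrow{\ \sim\ } G_2$ --- which exists by hypothesis --- can be taken to carry $A_1$ onto $A_2$, or at least onto a subgroup of $G_2$ isomorphic to $A_2$ as a $\mathbb{Z}_p[W]$-module. The easy case is $O_p(W) = 1$ (in particular whenever $p \nmid |W|$): then $A_i$ is the largest normal $p$-subgroup $O_p(G_i)$ of $G_i$, since $A_i \leq O_p(G_i)$ as $A_i$ is a normal $p$-subgroup, while $O_p(G_i)/A_i$ is a normal $p$-subgroup of $G_i/A_i \cong W$, hence trivial; so $A_i$ is characteristic in $G_i$ and $\phi(A_1) = A_2$ automatically. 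When $p \mid |W|$ and $O_p(W) \neq 1$ the subgroup $A_i$ need not be characteristic --- already for $W = \mathbb{Z}/2\mathbb{Z}$ acting on $\mathbb{Z}^2$ by the coordinate swap one has $S/2T \cong D_8$ (the dihedral group of order $8$), in which the image of $T/2T$ is one of two $\mrm{Aut}$-conjugate Klein four-subgroups --- and here one must argue separately, using the description of $G_i$ as an extension of $W$ by $A_i$ together with the classification of such extensions (their second cohomology), that any two ``translation-type'' normal subgroups of $G_2$ (normal abelian, of order $p^{kn}$, with quotient $\cong W$) are isomorphic as $\mathbb{Z}_p[W]$-modules, so that $\phi(A_1) \cong A_2$ regardless. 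This is where I expect the genuine work to lie.

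Granting that, the rest is a diagram chase. If $\phi(A_1) = A_2$ --- after modifying $\phi$ by an automorphism of $G_2$, or after transporting along the module isomorphism $\phi(A_1) \cong A_2$ --- then $\phi$ descends to an isomorphism $\bar\phi : W \cong G_1/A_1 \to G_2/A_2 \cong W$, and for a lift $\tilde w \in G_1$ of $w \in W$ and $a \in A_1$ the identity $\phi(\tilde w\, a\, \tilde w^{-1}) = \phi(\tilde w)\,\phi(a)\,\phi(\tilde w)^{-1}$ shows that $\phi|_{A_1}$ is semilinear over $\bar\phi$; identifying the two copies of $W$ via $\bar\phi$ turns $\phi|_{A_1}$ into a genuine $\mathbb{Z}[W]$-module isomorphism $T(S_1)/p^kT(S_1) \cong T(S_2)/p^kT(S_2)$. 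Letting $p$ and $k$ range proves that $T(S_1)$ and $T(S_2)$ lie in the same genus as $\mathbb{Z}[W]$-lattices.

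One bookkeeping point remains: the identifications $\bar\phi$ produced for different pairs $(p,k)$ may differ by automorphisms of $W$, whereas the statement asks for a single identification. For the crystallographic groups of interest this is harmless, since $T(S_i)\otimes\mathbb{Q}$ is the geometric representation of $W$ and the genus of the lattices is stable under the relevant twists; in general one fixes one identification (say the one coming from $m = 1$) at the outset and checks that the steps above can be carried out compatibly with it. To summarize, the main obstacle is the non-canonicity of the translation subgroup inside the finite quotient $S/p^kT$ at primes dividing $|W|$; everything else is bookkeeping.
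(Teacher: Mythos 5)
The paper does not prove this statement at all: it is quoted verbatim as an exercise from Holt--Plesken (\cite[Exercise 1, p.~96]{plesken}), so there is no proof of record to compare with and your attempt has to stand on its own. Judged that way, it is incomplete precisely where you yourself locate the difficulty. Your reduction to prime powers, the identification of the point groups via the case $m=1$, and the easy case $O_p(W)=1$ (where $A_i=T(S_i)/p^kT(S_i)$ equals $O_p(G_i)$ and is therefore characteristic) are fine. But for $p\mid |W|$ with $O_p(W)\neq 1$ you only assert that ``one must argue separately, using \dots second cohomology, that any two translation-type normal subgroups of $G_2$ are isomorphic as $\mathbb{Z}_p[W]$-modules'' and add that this is where you expect the genuine work to lie. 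That intermediate claim is not proved, and it is not even well posed as stated: a normal abelian subgroup $B\trianglelefteq G_2$ of order $p^{kn}$ with $G_2/B\cong W$ carries no canonical $W$-module structure until one chooses an identification of $G_2/B$ with $W$, and different choices change the module by an automorphism twist. A useful reduction you could have made explicit is that it suffices to treat large exponents $N$ (an isomorphism $T_1/p^NT_1\cong T_2/p^NT_2$ reduces mod $p^k$ for every $k\leq N$), and for $N$ large the image $A_i$ is self-centralizing in $G_i$; but self-centralizing abelian normal subgroups still need not be characteristic -- your own $D_8$ example shows this -- so the central difficulty is genuinely untouched.

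There is a second gap which you dismiss as ``bookkeeping'' but which matters for the statement as formulated. Even when $\phi(A_1)=A_2$, the restriction $\phi|_{A_1}$ is only semilinear over an automorphism $\sigma_{p,k}$ of $W$ induced by $\phi$, so what you obtain is $T_1/p^kT_1\cong{}^{\sigma_{p,k}}\bigl(T_2/p^kT_2\bigr)$ with a twist that may vary with $(p,k)$. The conclusion of the Fact requires a single identification of the point groups with $W$, fixed once and for all, such that the reductions are $\mathbb{Z}[W]$-isomorphic at every prime power. Since $\mathrm{Aut}(W)$ is finite, a pigeonhole argument fixes one twist per prime (compatibly over all $k$, because module isomorphisms pass to reductions), but it does not produce one twist working for infinitely many primes simultaneously; and your parenthetical appeal to the geometric representation of $W$ being ``stable under the relevant twists'' is an unsubstantiated assertion, not an argument, and in any case would only address the Coxeter situation rather than general crystallographic space groups, for which the Fact is stated. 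So the proposal correctly frames the problem and settles the unramified case, but the two points it defers are exactly the content of the exercise.
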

	
\begin{fact}[{\cite[Theorem~4.1.8]{plesken}}]\label{plesken_fact} Let $W_0$ be a finite group and $L_1, L_2$ be two $\mathbb{Z}[W_0]$-lattices. Suppose that $L_1, L_2$ belong to the same genus. Then for every $m \in \mathbb{N}$ there is a $\mathbb{Z}[W_0]$-monomorphism $\phi: L_2 \rightarrow L_1$ such \mbox{that $[L_1 : \phi L_2]$ has index coprime to $m$.}
\end{fact}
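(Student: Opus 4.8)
This is a form of \emph{Roiter's replacement lemma} for lattices over the $\mathbb{Z}$-order $\Lambda := \mathbb{Z}[W_0]$ inside the semisimple $\mathbb{Q}$-algebra $\mathbb{Q}[W_0]$, and the plan is to reconstruct its proof by a local-to-global argument together with approximation inside a $\mathrm{Hom}$-lattice. First I would reformulate the genus hypothesis of Definition~\ref{same_genus}(ii): since at each level the set of isomorphisms $L_1/p^kL_1 \to L_2/p^kL_2$ is finite and non-empty, the inverse limit over $k$ is non-empty, so $L_1\otimes_{\mathbb{Z}}\mathbb{Z}_p \cong L_2\otimes_{\mathbb{Z}}\mathbb{Z}_p$ as $\mathbb{Z}_p[W_0]$-modules for every prime $p$. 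Tensoring with $\mathbb{Q}$ shows $\mathbb{Q}\otimes L_1$ and $\mathbb{Q}\otimes L_2$ are $\mathbb{Q}[W_0]$-isomorphic (isomorphism of $\mathbb{Q}[W_0]$-modules is detected by the local isomorphism classes, equivalently by characters). Fix such an isomorphism and identify both $L_i$ with full $\Lambda$-lattices in a common $\mathbb{Q}[W_0]$-module $V$; after scaling the identification of $L_2$ by a suitable positive integer we may assume $L_2\subseteq L_1$ with $L_1/L_2$ finite. This replaces $L_2$ by a $\Lambda$-isomorphic lattice, hence still one in the same genus as $L_1$.

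The heart of the argument is to clear a single prime $p\mid m$ from the index without spoiling the others. Set $\Gamma := \mathrm{Hom}_{\Lambda}(L_2,L_1)$, a finitely generated free abelian group with $\Gamma\otimes\mathbb{Z}_q = \mathrm{Hom}_{\Lambda\otimes\mathbb{Z}_q}(L_2\otimes\mathbb{Z}_q,\, L_1\otimes\mathbb{Z}_q)$ for all $q$. By the genus hypothesis there is an isomorphism $\alpha\in\Gamma\otimes\mathbb{Z}_p$. Using that $\Gamma$ is dense in $\Gamma\otimes\mathbb{Z}_p$ in the $p$-adic topology together with the Chinese Remainder Theorem inside the finitely generated group $\Gamma$, I would choose $\beta\in\Gamma$ with $\beta\equiv\alpha\pmod{p\,\Gamma}$ and with $\beta$ congruent to the inclusion $L_2\hookrightarrow L_1$ modulo $q^{N_q}\Gamma$ for each prime $q\neq p$ dividing $m\cdot[L_1:L_2]$, where $N_q$ exceeds the $q$-adic valuation of $[L_1:L_2]$. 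Then $\beta$ reduces mod $p$ to the isomorphism $\alpha$, so by Nakayama and equality of $\mathbb{Z}_p$-ranks $\beta\otimes\mathbb{Z}_p$ is an isomorphism; in particular $\beta\otimes\mathbb{Q}$ is an isomorphism, so $\beta$ is injective and $\beta(L_2)\cong L_2$. Comparing local indices, $\beta(L_2)\otimes\mathbb{Z}_p = L_1\otimes\mathbb{Z}_p$, so $p\nmid[L_1:\beta(L_2)]$, while for each other $q\mid m\cdot[L_1:L_2]$ the congruence forces $\beta(L_2)\otimes\mathbb{Z}_q = L_2\otimes\mathbb{Z}_q$, so the $q$-part of the index is unchanged.

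Finally I would iterate this step over the finitely many primes dividing $m$. At each stage the new lattice is $\Lambda$-isomorphic to $L_2$, hence in the same genus as $L_1$; the treated prime disappears from the index; and a prime treated earlier stays absent because at the next stage $\beta$ is again chosen congruent to the inclusion there. The resulting composite $\phi:L_2\hookrightarrow L_1$ then has $[L_1:\phi(L_2)]$ coprime to $m$. The main obstacle is exactly the middle step: one must produce a single $\beta$ that simultaneously approximates the abstract local isomorphism at $p$ (so that Nakayama upgrades ``isomorphism mod $p$'' to an honest local isomorphism and the $p$-part of the index vanishes) and approximates the already-good inclusion at the remaining relevant primes (so their part of the index is untouched), and then one must read off from the local indices that $\beta(L_2)$ sits inside $L_1$ with index of the prescribed shape. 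This is precisely Roiter's lemma; it relies on the local--global description of $\mathrm{Hom}$-lattices and on density plus the Chinese Remainder Theorem, whereas the genus reformulation in the first step and the induction in the last are routine.
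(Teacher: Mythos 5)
Your argument is correct, but note that the paper does not prove this statement at all: it is quoted as a Fact from Holt--Plesken \cite[Theorem~4.1.8]{plesken}, so there is no in-paper proof to compare against. What you have written is essentially the standard proof of Roiter's replacement lemma: the inverse-limit step correctly upgrades the paper's finite-level genus condition (Definition~\ref{same_genus}(ii)) to $\mathbb{Z}_p$-local isomorphisms, the identification of $\mathrm{Hom}_{\Lambda}(L_2,L_1)\otimes\mathbb{Z}_q$ with the local $\mathrm{Hom}$ is legitimate by flatness and finite presentation, and the Nakayama argument (surjective mod $p$, hence surjective over $\mathbb{Z}_p$, hence an isomorphism of free modules of equal rank) is the right mechanism for killing the $p$-part of the index. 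One small remark: the iteration over the primes dividing $m$, and the auxiliary congruences $\beta\equiv\iota \pmod{q^{N_q}\Gamma}$ at the primes dividing $[L_1:L_2]$, are not needed. Since the congruence conditions at distinct primes are independent, you can choose a single $\beta\in\Gamma$ with $\beta\equiv\alpha_p\pmod{p\,\Gamma}$ simultaneously for every prime $p$ dividing $m$, where $\alpha_p$ is a local isomorphism at $p$; then $\beta\otimes\mathbb{Z}_p$ is an isomorphism for each such $p$, so $\beta$ is injective and $[L_1:\beta(L_2)]$ is already coprime to $m$ in one step. Your multi-step version is correct (the verification that $\beta\equiv\iota\pmod{q^{N_q}}$ with $N_q>v_q([L_1:L_2])$ preserves the $q$-local image does go through, via $q^{N_q}M\subseteq qN$ and Nakayama), just longer than necessary.
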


\begin{theorem}\label{suff_cond_rigidity} Let $W = \mathbb{Z}^n \rtimes_\alpha W_0$ with $W_0$ finite and $\alpha:W_0\rightarrow \mathbb{Z}^n$ injective. Suppose that there are subgroups $N_1, ..., N_k$ of $T = \mathbb{Z}^n \times \{e\} \leq W$ which are normal in $W$ and of finite index, $[T:N_i]=n_i$, for $1\leq i\leq k$, in T, and such that any normal subgroup of $W$ contained in $T$ is a multiple, $mN_i$, of $N_i$ for some $1\leq i\leq k$, and $m<\omega$. 
Then $W$ is first-order rigid.
\end{theorem}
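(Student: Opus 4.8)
The plan is to take an arbitrary finitely generated $H$ with $H\equiv W$ and show $H\cong W$. By Proposition~\ref{rigidity_prop} we may write $H\cong\mathbb{Z}^n\rtimes_\beta W_0$ for some injective $\beta\colon W_0\to\mathrm{GL}_n(\mathbb{Z})$, so both $W$ and $H$ are crystallographic space groups whose point group is a faithful copy of $W_0$. Denote by $T_W=\mathbb{Z}^n\times\{e\}\leq W$ and $T_H=\mathbb{Z}^n\times\{e\}\leq H$ their translation subgroups, and let $L_W$ (resp.\ $L_H$) be $T_W$ (resp.\ $T_H$) viewed as a $\mathbb{Z}[W_0]$-lattice via $\alpha$ (resp.\ $\beta$).

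The first step is to show that $W$ and $H$ lie in the same genus as crystallographic space groups, i.e.\ $W/mT_W\cong H/mT_H$ for every $m<\omega$ (Definition~\ref{same_genus}(i)). By Proposition~\ref{the_prop_def_transla} the translation subgroup of any $\mathbb{Z}^n\rtimes_\gamma W_0$ is uniformly defined by $\phi(x)=\forall y\,([x,y^{|W_0|}]=1)$; hence $mT_W$ and $mT_H$ are defined by a common first-order formula, have finite index, and the isomorphism type of the finite quotient $W/mT_W$ is an elementary invariant, so it agrees with that of $H/mT_H$. (Alternatively, one argues through profinite completions: $H\equiv W$ gives $\widehat H\cong\widehat W$ by Fact~\ref{Oger}, Fact~\ref{Grune} identifies both with $\widehat{\mathbb{Z}^n}\rtimes W_0$, the subgroup $\widehat{\mathbb{Z}^n}$ is again defined by $\phi$ and hence characteristic, and quotienting by $m\widehat{\mathbb{Z}^n}$ recovers $W/mT_W$.) With $W$ and $H$ in the same genus, Fact~\ref{Exercise1} gives that $L_W$ and $L_H$ lie in the same genus as $\mathbb{Z}[W_0]$-lattices.

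Now comes the step exploiting the hypothesis on $N_1,\dots,N_k$. Put $m:=\mathrm{lcm}(n_1,\dots,n_k)$. By Fact~\ref{plesken_fact} there is a $\mathbb{Z}[W_0]$-monomorphism $\psi\colon L_H\to L_W$ with $[L_W:\psi L_H]$ coprime to $m$. The image $\psi L_H$ is a $\mathbb{Z}[W_0]$-submodule of $T_W$, hence a finite-index subgroup of $T$ normalised by $T$ (abelian) and by $W_0$ (module structure), so $\psi L_H\trianglelefteq W$ and $\psi L_H\subseteq T$. By hypothesis $\psi L_H=m'N_i$ for some $1\le i\le k$ and $m'<\omega$, so $[L_W:\psi L_H]=[T:N_i]\,[N_i:m'N_i]=n_i\,(m')^{n}$. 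Since this integer is coprime to $m$ whereas $n_i\mid m$, we must have $n_i=1$, i.e.\ $N_i=T$, and therefore $\psi L_H=m'T_W=m'L_W$. Consequently $L_H\cong\psi L_H=m'L_W\cong L_W$ as $\mathbb{Z}[W_0]$-modules (the first isomorphism since $\psi$ is injective, the last since multiplication by $m'$ is $\mathbb{Z}[W_0]$-linear and injective on the torsion-free module $L_W$). By Lemma~\ref{MultipAction} this gives $H=L_H\rtimes_\beta W_0\cong L_W\rtimes_\alpha W_0=W$, as desired.

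The main obstacle sits in two places. The first is the implication ``elementarily equivalent $\Rightarrow$ same genus'': one must make sure the translation subgroup is genuinely \emph{recognised} — uniformly definable, equivalently characteristic in the profinite completion — so that the finite quotients $W/mT_W$ really are elementary invariants, and Proposition~\ref{the_prop_def_transla} is precisely what secures this. The second, and conceptually the crux, is the choice $m=\mathrm{lcm}(n_i)$: membership in a common genus is by itself far too weak to force a lattice isomorphism (a genus typically contains several isomorphism classes, as the $\mathbb{Z}^{22}\rtimes\mathbb{Z}_{23}$ example shows); it is exactly the hypothesis that every $W$-normal subgroup of $T$ is some $m'N_i$, combined with Plesken's coprime-index refinement (Fact~\ref{plesken_fact}), that collapses the genus class of $L_W$ to a single isomorphism type.
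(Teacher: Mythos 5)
Your proposal is correct and follows essentially the same route as the paper's proof: Proposition~\ref{rigidity_prop} to put $H$ in the form $\mathbb{Z}^n\rtimes_\beta W_0$, definability of the translation subgroup (Proposition~\ref{the_prop_def_transla}) to get same genus, Fact~\ref{Exercise1} and Fact~\ref{plesken_fact} with $m$ a common multiple of the $n_i$ (the paper takes the product where you take the lcm, which is immaterial), and the normal-subgroup hypothesis plus Lemma~\ref{MultipAction} to force $\psi L_H=m'L_W$ and conclude $H\cong W$. The only cosmetic difference is that you fold the paper's two cases into a single coprimality argument and spell out a bit more why $\psi L_H$ is normal in $W$ and why the genus condition is first-order expressible.
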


\begin{proof} 
Suppose that $G$ is finitely generated and elementarily equivalent to $W$, then, by \ref{rigidity_prop}, $G \cong \mathbb{Z}^n \rtimes_\beta W_0$, with $\beta:W_0\rightarrow\mathbb{Z}^n$ injective. In particular $G$ is a crystallographic space group.  

\smallskip \noindent Let $L_1 = \mathbb{Z}^n \times \{e\} \leq W$ and $L_2 = \mathbb{Z}^n \times \{e\} \leq G$ be the corresponding translation lattices. Clearly, $L_1$ and $L_2$ are naturally $\mathbb{Z}[W_0]$-lattices. We will show that they belong to the same genus. Indeed, by Proposition \ref{the_prop_def_transla}, there exists a $\emptyset$-definable formula whose solution set in $W$ (correspondingly $G$) is the translation lattice $L_1$  (correspondingly $L_2$). Thus, the conditions $W/mL_1\cong G/mL_2$, for every $m<\omega$, are first-order expressible and since $G$ is elementarily equivalent to $W$, they belong to the same genus. Therefore, by Fact \ref{Exercise1}, $L_1, L_2$ belong to the same genus as $\mathbb{Z}[W_0]$-lattices.


\smallskip \noindent Recalling the hypotheses on the normal subgroups of $W$, let $m = \prod_{1\leq i \leq k} n_i$. Then, applying Fact~\ref{plesken_fact} for this $m$ we get a $\mathbb{Z}[W_0]$-monomorphism $\phi: L_2 \rightarrow L_1$ such that $[L_1 : \phi (L_2)]$ has index coprime to $m$. Since, $\phi$ is a $\mathbb{Z}[W_0]$ monomorphism, we get that $\phi(L_2)$ is a normal subgroup of $W$ and so, by the hypotheses, $\phi(L_2) = \ell N_j$, for some $1\leq j \leq k$ and $\ell < \omega$. We now take cases:  \\ 
\begin{itemize}
    \item[\underline{Case 1}:] If $N_j=L_1$, then $\phi(L_2)=\ell L_1$. We recall that, by Lemma \ref{MultipAction}, $W_0$ acts on $L_1$, via $\alpha$, as it acts on $\ell L_1$. In particular, $W_0$ acts on $L_2$, via $\beta$, as $W_0$ acts on $L_1$, via $\alpha$, and so $G$ is isomorphic to $W$.   
    \\
    \item[\underline{Case 2}:] If $N_j\neq L_1$, then $[L_1:N_j] = n_j>1$ and $[W:\ell N_j]=[W:N_j][N_j:\ell N_j]=n_j\ell^n$, a contradiction since it is not coprime to $m$.
\end{itemize}

\end{proof}

\begin{fact}[{\cite[Proposition~7.2]{maxwell_normal}}]\label{maxwell_normal_fact} Let $W$ be an irreducible affine Coxeter group and let $W = T \rtimes W_0$ be as in \ref{fact_affine}, then there are finitely many normal subgroups $N_1, ..., N_k$ of $W$ contained in $T$ of finite index in $T$ such that any normal subgroup of $W$ contained in $T$ is a multiple, $\ell N_i$, of $N_i$ for some $1\leq i\leq k$ and $\ell<\omega$.
\end{fact}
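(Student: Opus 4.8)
The plan is to reduce the statement to the integral representation theory of the point group $W_0$ and then apply the Jordan--Zassenhaus finiteness theorem. First I would note that, since $T$ is abelian, a subgroup $N\leq T$ is normal in $W=T\rtimes W_0$ if and only if it is stable under the conjugation action of $W_0$, i.e.\ if and only if $N$ is a $\mathbb{Z}[W_0]$-submodule of the $\mathbb{Z}[W_0]$-lattice $T$. So the task becomes: show that the nonzero $\mathbb{Z}[W_0]$-submodules of $T$ all have finite index and, modulo passing to integer multiples, form a finite set.

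The key structural input is that $V:=T\otimes_{\mathbb{Z}}\mathbb{Q}$ is the rational form of the geometric representation of the irreducible finite Coxeter group $W_0$ (cf.\ the proof of Fact~\ref{fact_affine}), hence is absolutely irreducible; in particular $\mrm{End}_{\mathbb{Q}[W_0]}(V)=\mathbb{Q}$ --- for instance because the $W_0$-invariant bilinear form on $V$ is unique up to a scalar (see \cite{humphreys}). From this I draw two consequences: (i) any nonzero $\mathbb{Z}[W_0]$-submodule $N\leq T$ satisfies $\mathbb{Q}N=V$, hence has finite index in $T$; and (ii) if $N,N'\leq T$ are isomorphic as $\mathbb{Z}[W_0]$-modules, then any such isomorphism extends, after tensoring with $\mathbb{Q}$, to an element of $\mrm{Aut}_{\mathbb{Q}[W_0]}(V)=\mathbb{Q}^{\times}$ acting as a scalar, so $N'=qN$ for some $q\in\mathbb{Q}_{>0}$.

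Next I would invoke the Jordan--Zassenhaus theorem for the order $\mathbb{Z}[W_0]$ in the semisimple algebra $\mathbb{Q}[W_0]$: there are only finitely many isomorphism classes of $\mathbb{Z}[W_0]$-lattices $L$ with $\mathbb{Q}\otimes_{\mathbb{Z}}L\cong V$ as $\mathbb{Q}[W_0]$-modules. Choose representatives $L_1,\dots,L_k$, which we may take to be submodules of $T$ (any such $L$ embeds in $V$, and then an integer rescaling lands inside $T$). Fix one $L_i\leq T$. By (ii), every $\mathbb{Z}[W_0]$-submodule of $T$ isomorphic to $L_i$ has the form $qL_i$ with $q\in\mathbb{Q}_{>0}$ and $qL_i\subseteq T$. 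Writing a $\mathbb{Z}$-basis of $L_i$ in terms of a $\mathbb{Z}$-basis of $T$, one sees that $J_i:=\{q\in\mathbb{Q}:qL_i\subseteq T\}$ is an intersection of finitely many fractional ideals of $\mathbb{Z}$, hence a nonzero finitely generated $\mathbb{Z}$-submodule of $\mathbb{Q}$, so $J_i=c_i\mathbb{Z}$ for a unique $c_i\in\mathbb{Q}_{>0}$. Setting $N_i:=c_iL_i\leq T$, one gets that $N_i$ is the largest submodule of $T$ in the class of $L_i$, and that every $\mathbb{Z}[W_0]$-submodule of $T$ isomorphic to $L_i$ equals $\ell N_i$ for some $\ell\in\mathbb{Z}_{>0}$. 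Together with (i) and the Jordan--Zassenhaus finiteness, the list $N_1,\dots,N_k$ is as required (here one reads the statement for nonzero $N$; the trivial subgroup is contained in every $N_i$ but is not a nonzero integer multiple of one).

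The only substantial external ingredients are the absolute irreducibility of the geometric representation of an irreducible finite Coxeter group and the Jordan--Zassenhaus theorem; the rest is bookkeeping. Accordingly, the step I expect to require the most care is the passage from ``finitely many isomorphism classes of lattices'' to ``finitely many submodules of $T$ up to integer multiples'': this is exactly where $\mrm{End}_{\mathbb{Q}[W_0]}(V)=\mathbb{Q}$ is used, to promote an abstract isomorphism of lattices to a scaling inside $V$, and where computing the fractional ideal $J_i$ pins down the canonical representative $N_i$.
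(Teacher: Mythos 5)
This statement is imported into the paper as a Fact, cited to Maxwell's \emph{The normal subgroups of finite and affine Coxeter groups} (Proposition~7.2); the paper itself gives no proof, so your argument can only be compared with Maxwell's. Your proof is correct, and it takes a genuinely different, more abstract route. The reduction of ``normal in $W$ and contained in $T$'' to ``$\mathbb{Z}[W_0]$-submodule of $T$'' is right (conjugation by $(t,w)$ sends $(n,e)$ to $(\alpha(w)n,e)$), the identification of $T\otimes\mathbb{Q}$ with the rational reflection representation is exactly what the paper's Fact~\ref{fact_affine} records, and the two consequences you extract from $\mrm{End}_{\mathbb{Q}[W_0]}(V)=\mathbb{Q}$ --- full rank (hence finite index) of any nonzero invariant sublattice, and ``isomorphic invariant sublattices differ by a positive rational scalar'' --- are the key points; together with Jordan--Zassenhaus and the computation that $J_i=\{q: qL_i\subseteq T\}$ is a cyclic $\mathbb{Z}$-submodule of $\mathbb{Q}$, they do yield maximal representatives $N_i=c_iL_i$ with every nonzero invariant sublattice equal to some $\ell N_i$. (Your parenthetical caveats --- the trivial subgroup, the sign of the scalar, and justifying $\mrm{End}_{\mathbb{Q}[W_0]}(V)=\mathbb{Q}$ by base change to $\mathbb{R}$ plus the uniqueness of the invariant form / Humphreys' ``only scalars commute with the geometric representation'' --- are all handled correctly.) The difference from the cited source is that Maxwell obtains the result by explicitly determining the $W_0$-invariant sublattices of the translation lattice through root-system combinatorics, case by case, which produces the actual list $N_1,\dots,N_k$ and their indices; your Schur-plus-Jordan--Zassenhaus argument is shorter and type-free but purely an existence/finiteness statement. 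For the way the Fact is used in Theorem~\ref{suff_cond_rigidity} (only the existence of the finite list and of the indices $n_i$ matters), your version suffices, so as a self-contained replacement for the citation it is adequate, at the cost of invoking the heavier general machinery that the explicit classification avoids.
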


\begin{theorem}\label{fo_rigidity} 
Any irreducible affine Coxeter group is first-order rigid.
\end{theorem}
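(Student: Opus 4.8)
The plan is simply to assemble the machinery already developed in this section. First I would apply Fact~\ref{fact_affine} to an irreducible affine Coxeter group $W$: it gives a decomposition $W = T \rtimes W_0$ with $T \cong \mathbb{Z}^d$ for some $1 \leq d < \omega$, with $W_0$ a finite (Weyl, hence Coxeter) group, and — crucially — with the conjugation action of $W_0$ on $T$ faithful. Identifying $T$ with $\mathbb{Z}^d \times \{e\}$ and letting $\alpha : W_0 \to GL_d(\mathbb{Z})$ record this action, faithfulness means precisely that $\alpha$ is injective. So $W \cong \mathbb{Z}^d \rtimes_\alpha W_0$ with $\alpha$ injective, which places $W$ squarely in the hypothesis of Theorem~\ref{suff_cond_rigidity}.

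Next I would verify the remaining hypothesis of Theorem~\ref{suff_cond_rigidity}, namely the statement about normal subgroups contained in $T$. This is supplied verbatim by Fact~\ref{maxwell_normal_fact} (Maxwell): there are finitely many normal subgroups $N_1, \ldots, N_k$ of $W$, each contained in $T$ and of finite index in $T$, such that every normal subgroup of $W$ contained in $T$ is a multiple $\ell N_i$ of some $N_i$ (with $i \leq k$ and $\ell < \omega$). With both hypotheses of Theorem~\ref{suff_cond_rigidity} in hand, that theorem immediately yields that $W$ is first-order rigid, and we are done.

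I do not expect a genuine obstacle here, since all the substance is upstream: Proposition~\ref{rigidity_prop} constrains any finitely generated $H \equiv W$ to be of the form $\mathbb{Z}^n \rtimes_\beta W_0$ with $\beta$ injective; Proposition~\ref{the_prop_def_transla} makes the translation lattice $\emptyset$-definable, so that ``belonging to the same genus'' is a first-order consequence of $H \equiv W$ (via Facts~\ref{Exercise1} and \ref{plesken_fact}); Lemma~\ref{MultipAction} trivializes the dependence of the isomorphism type on rescaling the lattice; and the index computation in the case analysis of Theorem~\ref{suff_cond_rigidity} forces $\phi(L_2)$ to be a multiple of $L_1$, hence $G \cong W$. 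The only points requiring any care are purely bookkeeping — checking that the data from Fact~\ref{fact_affine} and Fact~\ref{maxwell_normal_fact} match the notation of Theorem~\ref{suff_cond_rigidity} exactly (in particular that the $N_i$ are taken inside $T = \mathbb{Z}^n \times \{e\}$ and of finite index there) — and these are routine.
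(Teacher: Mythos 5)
Your proposal is correct and matches the paper's proof exactly: the paper also deduces the theorem by combining Fact~\ref{fact_affine} (which gives $W \cong \mathbb{Z}^d \rtimes_\alpha W_0$ with $\alpha$ injective), Fact~\ref{maxwell_normal_fact} (which supplies the normal-subgroup hypothesis), and Theorem~\ref{suff_cond_rigidity}. Your extra remarks on matching notation are just the routine bookkeeping the paper leaves implicit.
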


\begin{proof}
This follows from \ref{fact_affine}, \ref{suff_cond_rigidity} and \ref{maxwell_normal_fact}.
\end{proof}

Finally Corollary \ref{RigProf} implies that:

\begin{theorem}\label{pro_rigidity} 
Any irreducible affine Coxeter group is profinitely rigid.
\end{theorem}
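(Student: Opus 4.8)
The plan is to deduce Theorem~\ref{pro_rigidity} immediately from the machinery already assembled, so the ``proof'' is really a bookkeeping step: by Theorem~\ref{fo_rigidity} every irreducible affine Coxeter group $W$ is first-order rigid, and by Corollary~\ref{RigProf} first-order rigidity and profinite rigidity are equivalent for irreducible affine Coxeter groups, hence $W$ is profinitely rigid. Thus, at the level of a one-line proof, there is nothing left to do. Still, it is worth spelling out \emph{why} Corollary~\ref{RigProf} applies, i.e.\ checking that an irreducible affine Coxeter group really is of the form $\mathbb{Z}^n \rtimes_\alpha W_0$ with $\alpha$ injective and $W_0$ finite: this is exactly the content of Fact~\ref{fact_affine}, which gives $W = N \rtimes W_0$ with $N \cong \mathbb{Z}^d$, $W_0$ a finite Coxeter group, and the action faithful. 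So the hypotheses of Theorem~\ref{conclusion_equiv_semi} (and hence of Corollary~\ref{RigProf}) are met.

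The one genuinely substantive point hidden inside this deduction — and the step I would flag as the real obstacle, were it not already handled earlier — is the residual finiteness bookkeeping in Corollary~\ref{RigProf} / Theorem~\ref{conclusion_equiv_semi}. Profinite rigidity is a statement about finitely generated \emph{residually finite} groups $H$ with $\widehat{H} \cong \widehat{W}$, whereas first-order rigidity quantifies over finitely generated $H$ with $H \equiv W$. The bridge is Oger's theorem (Fact~\ref{Oger}): for finitely generated abelian-by-finite groups, $H \equiv W \iff \widehat{H} \cong \widehat{W}$. To invoke it one must first know $H$ is abelian-by-finite; in the profinite direction this comes from Fact~\ref{Grune}, which identifies $\widehat{W} \cong \widehat{\mathbb{Z}^n} \rtimes_{\hat\alpha} W_0$, so any residually finite $H$ with that profinite completion embeds in an abelian-by-finite profinite group and is itself abelian-by-finite. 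Coxeter groups are residually finite (indeed linear), so $W$ itself is eligible. Putting these together: if $H$ is finitely generated, residually finite and $\widehat{H} \cong \widehat{W}$, then by Theorem~\ref{conclusion_equiv_semi} $H \equiv W$, whence by Theorem~\ref{fo_rigidity} $H \cong W$.

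Concretely I would write:

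\begin{proof}
By Fact~\ref{fact_affine}, an irreducible affine Coxeter group $W$ is of the form $\mathbb{Z}^n \rtimes_\alpha W_0$ with $W_0$ a finite Coxeter group and $\alpha \colon W_0 \to \mrm{Aut}(\mathbb{Z}^n) \cong GL_n(\mathbb{Z})$ injective, so Corollary~\ref{RigProf} applies to $W$. By Theorem~\ref{fo_rigidity}, $W$ is first-order rigid, and hence by Corollary~\ref{RigProf} it is profinitely rigid.
\end{proof}

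That is the whole argument; the mathematical weight sits in Theorem~\ref{fo_rigidity} (via Theorem~\ref{suff_cond_rigidity} and Fact~\ref{maxwell_normal_fact}) and in the equivalence packaged as Corollary~\ref{RigProf} (via Oger's Fact~\ref{Oger}, Grunewald's Fact~\ref{Grune}, and Proposition~\ref{rigidity_prop}), so nothing new needs to be proved here.
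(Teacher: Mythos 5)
Your proposal is correct and is essentially identical to the paper's argument: the paper derives Theorem~\ref{pro_rigidity} directly from Theorem~\ref{fo_rigidity} together with Corollary~\ref{RigProf}, exactly as you do. Your extra remarks on why Corollary~\ref{RigProf} applies (Fact~\ref{fact_affine}, Oger's Fact~\ref{Oger}, Fact~\ref{Grune}) merely unpack steps the paper has already established, so nothing is missing or different in substance.
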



\section{Hyperbolic right-angled Coxeter groups}\label{HRACG}
In this section we deal with a class of Coxeter groups almost orthogonal to affine Coxeter groups, the class of hyperbolic right-angled Coxeter groups. Particular examples in this class are f.g. universal Coxeter groups. These are groups isomorphic to finite free products of $\mathbb{Z}_2$'s. The universal Coxeter group of rank $n$, denoted by $W_n$, is the free product of $n$ copies of $\mathbb{Z}_2$. It admits the following presentation $\langle e_1, \ldots, e_n\ | \ e_1^2, \ldots, e_n^2\rangle$ and $(W_n, \{e_1, \ldots, e_n\})$ is obviously a Coxeter system.
As we already observed f.g. universal Coxeter groups are hyperbolic but even more is true, f.g. universal Coxeter groups are virtually free. One can witness this by the obvious epimorphism $\pi: W_n\rightarrow \mathbb{Z}_2$ sending each factor isomorphically to $\mathbb{Z}_2$. In particular, the subgroup of even length (reduced) words is a free subgroup of index $2$. More generally, it is known that any group that splits as a graph of groups with finite vertex groups, is virtually free, and actually this condition characterizes virtually free groups. 
Finally, the following well-known fact follows easily by the residual finiteness of $W_n$. 

\begin{fact} 
Let $n<\omega$. Then $W_n$ is Hopfian.
\end{fact}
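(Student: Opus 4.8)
The plan is to deduce Hopficity from residual finiteness plus finite generation, using the classical theorem of Malcev: every finitely generated residually finite group is Hopfian. Since $W_n$ is finitely generated (it is generated by $e_1,\dots,e_n$) and residually finite (Coxeter groups are residually finite, being linear over $\mathbb{Z}$ by the Tits representation, or more elementarily because finite free products of finite groups are residually finite), the statement follows immediately. So the real content of a self-contained proof is to recall why a finitely generated residually finite group is Hopfian.

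Here is how I would carry out that argument. Let $\varphi\colon W_n \to W_n$ be a surjective endomorphism, and suppose toward a contradiction that $1\neq g\in \ker\varphi$. By residual finiteness, pick a finite-index normal subgroup $M\trianglelefteq W_n$ with $g\notin M$. Now count: since $W_n$ is finitely generated, for each fixed index $[W_n:M]=d$ there are only finitely many subgroups of $W_n$ of index $d$ (a subgroup of index $d$ is determined by the transitive action on $d$ cosets, hence by a homomorphism $W_n \to S_d$ together with a choice of point stabilizer, and there are only finitely many homomorphisms from a finitely generated group to the finite group $S_d$). Pulling back along the surjection $\varphi$, the assignment $K \mapsto \varphi^{-1}(K)$ is an \emph{injective} map from the set of index-$d$ subgroups to itself: injectivity holds because $\varphi$ is surjective, so $\varphi(\varphi^{-1}(K)) = K$. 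An injective self-map of a finite set is a bijection, so there is an index-$d$ subgroup $K$ with $\varphi^{-1}(K) = M$. But $\ker\varphi \subseteq \varphi^{-1}(K) = M$, contradicting $g\in\ker\varphi \setminus M$. Hence $\ker\varphi$ is trivial and $\varphi$ is an automorphism.

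I do not anticipate a genuine obstacle here; the only point requiring minor care is the finiteness of the set of index-$d$ subgroups of a finitely generated group, which is where finite generation is actually used (a residually finite group that is not finitely generated need not be Hopfian). Alternatively, one could bypass Malcev entirely and invoke the already-cited fact that $W_n$ is virtually free together with the known result that finitely generated virtually free groups are Hopfian — but since the excerpt explicitly flags that this "follows easily by the residual finiteness of $W_n$," the Malcev route is the intended one and is the cleanest. A one-line proof in the paper would simply read: "$W_n$ is finitely generated and residually finite, hence Hopfian by Malcev's theorem."
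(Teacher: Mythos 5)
Your proposal is correct and follows exactly the route the paper intends: the Fact is stated with the remark that it ``follows easily by the residual finiteness of $W_n$,'' i.e., finite generation plus residual finiteness gives Hopficity via Malcev's theorem, which is precisely what you prove (with the standard counting-of-finite-index-subgroups argument spelled out). No gaps.
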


As a matter of fact all hyperbolic groups are Hopfian. 
The following lemma follows easily if one observes that any order two element of $W_n$ can be conjugated to $e_i$ for some $1\leq i \leq n$. 

\begin{lemma}\label{Orbits} 
The orbit $Aut(W_n).e_1$ is $\emptyset$-definable by a quantifier-free formula. 
\end{lemma}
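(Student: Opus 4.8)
The plan is to prove that the orbit $\mathrm{Aut}(W_n).e_1$ coincides with the set of all involutions (elements of order exactly two) in $W_n$, which is manifestly quantifier-free definable by the formula
\[
\psi(x) := (x \neq 1) \wedge (x^2 = 1).
\]
One inclusion is immediate: every $e_i$ is an involution, and automorphisms preserve order, so $\mathrm{Aut}(W_n).e_1$ consists of involutions. For the reverse inclusion I need two ingredients. First, a normal-form/structural fact about $W_n = \mathbb{Z}_2 \ast \cdots \ast \mathbb{Z}_2$: by the standard theory of torsion in free products (Kurosh, or the action on the Bass–Serre tree), every element of finite order in $W_n$ is conjugate to an element of one of the free factors $\langle e_i \rangle$; since each factor has a unique nontrivial element $e_i$, every involution $g \in W_n$ is of the form $g = w e_i w^{-1}$ for some $w \in W_n$ and some $1 \le i \le n$. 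Second, a transitivity fact about $\mathrm{Aut}(W_n)$ on the standard generators: the permutation of the $e_i$'s induced by any permutation $\sigma \in S_n$ extends to an automorphism of $W_n$ (this is clear from the presentation $\langle e_1,\dots,e_n \mid e_i^2\rangle$, which is symmetric in the generators), and inner automorphisms realize all conjugates; composing, for any $w$ and any $i$ there is an automorphism sending $e_1 \mapsto w e_i w^{-1}$. Hence every involution lies in $\mathrm{Aut}(W_n).e_1$, giving equality of the two sets and proving the lemma.

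Concretely, I would first record the claim: for $g \in W_n$, we have $g \in \mathrm{Aut}(W_n).e_1$ if and only if $g \neq 1$ and $g^2 = 1$. Then I would prove the nontrivial direction by fixing an involution $g$, invoking the free-product torsion theorem to write $g = w e_i w^{-1}$, defining $\theta_\sigma \in \mathrm{Aut}(W_n)$ for the transposition $\sigma = (1\ i)$ via $e_j \mapsto e_{\sigma(j)}$ (well-defined since it respects all relations), and defining $\iota_w \in \mathrm{Inn}(W_n)$ by conjugation by $w$; then $\iota_w \circ \theta_\sigma$ sends $e_1$ to $w e_i w^{-1} = g$. Conversely any $\phi \in \mathrm{Aut}(W_n)$ sends the order-two element $e_1$ to an order-two element, so the orbit is contained in $\psi(W_n)$.

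The main (and only real) obstacle is justifying that every torsion element of $W_n$ is conjugate into a factor $\langle e_i\rangle$ — but this is the classical statement that in a free product $A_1 \ast \cdots \ast A_n$ every element of finite order is conjugate to an element of some $A_j$, which follows from the action on the Bass–Serre tree (a finite-order element fixes a vertex) or from Kurosh's subgroup theorem applied to the cyclic subgroup it generates. Everything else is routine: that generator permutations extend to automorphisms is a one-line check against the presentation, and quantifier-freeness of $\psi$ is by inspection. This is exactly the observation flagged in the sentence preceding the lemma statement, so I would keep the proof to a short paragraph citing the free-product torsion fact and spelling out the two inclusions.
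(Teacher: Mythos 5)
Your proposal is correct and follows essentially the same route as the paper: both identify the orbit with the set of involutions defined by $x\neq 1 \wedge x^2=1$, use the action on the Bass--Serre tree (or the classical free-product torsion fact) to conjugate any involution into a factor $\langle e_i\rangle$, and then use an automorphism (your explicit permutation composed with an inner automorphism) to move it to $e_1$. The only difference is that you spell out the final automorphism explicitly, which the paper leaves implicit.
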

\begin{proof}
We will show that the quantifier-free formula $\phi(x):= (x^2=1 \land x\neq 1)$, defines $Aut(W_n).e_1$. Indeed, the formula $\phi(x)$ defines all order two elements in $W_n$, and if $b\in Aut(W_n).e_1$, then $b$ has order $2$. We, next, observe, by looking at the standard simplicial tree $\langle e_1\rangle*\ldots *\langle e_n\rangle$ acts on, that every element of order two can be conjugated to $e_i$, for some $i\leq n$. Hence, there is an automorphism taking this element to $e_1$ and therefore it belong to $Aut(W_n).e_1$.   
\end{proof}

Any finitely generated group admits a {\em Grushko decomposition} $A_1*A_2*\ldots *A_n*\mathbb{F}_m$, where the $A_i$'s are freely indecomposable and not infinite cyclic and $\mathbb{F}_m$ is a free group of rank $m$. Moreover, up to conjugation and permutation of the factor groups, $m$, $n$ as well as the $A_i$'s are unique.

\begin{fact}[Kurosh Subgroup Theorem]
Let $G=G_1*\ldots * G_n$ be a free product and $H$ a subgroup of $G$. Then $H$ is a free product of the form $H_1*\ldots*H_m*\mathbb{F}$, where each $H_i$ is of the form $H\cap G_{j_i}^{g_{j_i}}$, for $G_{j_i}$ some free factor of $G$ among the $G_i$, $i\leq n$  and $\mathbb{F}$ a free group.
\end{fact}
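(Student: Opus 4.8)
The plan is to deduce the statement from Bass--Serre theory. First I would realise $G = G_1 \ast \cdots \ast G_n$ as the fundamental group of a graph of groups whose underlying graph $Y_0$ is a star: one central vertex carrying the trivial group, $n$ peripheral vertices carrying $G_1, \dots, G_n$, and all edge groups trivial. Let $T$ be the associated Bass--Serre tree. Then $G$ acts on $T$ without inversions, every edge stabiliser is trivial, and the vertex stabilisers are precisely the conjugates $gG_ig^{-1}$ (for the peripheral vertices) together with conjugates of the trivial group. Concretely, the peripheral vertices of type $i$ are in $G$-equivariant bijection with the cosets $gG_i$, and $\mrm{Stab}_G(gG_i) = gG_ig^{-1}$.

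Next I would restrict the $G$-action to the subgroup $H \leq G$. Since $H$ still acts on the tree $T$ without inversions, the fundamental theorem of Bass--Serre theory identifies $H$ with the fundamental group of the quotient graph of groups $\mathcal{H}$ over $Y := H\backslash T$. Its edge groups are stabilisers in $H$ of edges of $T$, hence subgroups of the (trivial) edge stabilisers in $T$, hence trivial. Its vertex groups are the stabilisers in $H$ of a chosen set of $H$-orbit representatives of vertices of $T$: for a peripheral vertex $gG_i$ this is $H \cap gG_ig^{-1}$, and letting $g$ range over a transversal for the double cosets $H\backslash G / G_i$ produces exactly subgroups of the form $H \cap G_{j_i}^{g_{j_i}}$; for a central vertex it is a conjugate of the trivial group, i.e.\ trivial.

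Finally, I would invoke the structure of the fundamental group of a graph of groups with all edge groups trivial over a connected graph $Y$: choosing a spanning tree $Y' \subseteq Y$, contraction of $Y'$ collapses no vertex group (its edges carry trivial groups) and each edge of $Y \setminus Y'$ contributes one free generator, so
\[
\pi_1(\mathcal{H}) \;\cong\; \Bigl(\ast_{w \in V(Y)} \mathcal{H}_w\Bigr) \ast \mathbb{F},
\]
where $\mathbb{F}$ is free of rank $\mathrm{rk}\,H_1(Y;\mathbb{Z})$. Discarding the trivial factors leaves $H \cong H_1 \ast \cdots \ast H_m \ast \mathbb{F}$ with each $H_i$ of the required form $H \cap G_{j_i}^{g_{j_i}}$, which is the assertion.

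I expect the only genuinely delicate point to be the bookkeeping in the second step: verifying that the vertex groups of the quotient graph of groups are precisely the intersections of $H$ with $G$-conjugates of the free factors, and not some coarser list — this amounts to computing the $H$-stabilisers of vertices of $T$ from the coset description of $V(T)$ and selecting orbit representatives via double-coset transversals. Everything else is an application of standard structure theorems, so there is no real obstacle beyond assembling them carefully. An alternative and entirely equivalent route would be the classical combinatorial argument with Kurosh systems of coset representatives, but the tree-theoretic proof is cleaner to organise.
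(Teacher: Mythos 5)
Your argument is correct, and it is the standard modern proof of the Kurosh Subgroup Theorem. The paper itself offers no proof at all: the statement is recorded as a classical \emph{Fact} and used as a black box (its role in the paper is only to control subgroups of the universal Coxeter groups $W_n$ and of hyperbolic right-angled Coxeter groups), so there is nothing to compare step by step. Your Bass--Serre route --- realize $G_1\ast\cdots\ast G_n$ as a graph of groups on a star with trivial central vertex and trivial edge groups, restrict the action on the Bass--Serre tree to $H$, observe that edge stabilizers remain trivial and that $H$-stabilizers of peripheral vertices are exactly the intersections $H\cap gG_ig^{-1}$ indexed by double cosets $H\backslash G/G_i$, and then apply the structure theorem for graphs of groups with trivial edge groups --- is exactly how one proves this nowadays (it is the argument in Serre's \emph{Trees}), and the double-coset bookkeeping you flag as the delicate point is handled by the standard identification of vertex groups of the quotient graph of groups with stabilizers of orbit representatives. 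One small remark: as stated in the paper the decomposition has finitely many factors $H_1\ast\cdots\ast H_m\ast\mathbb{F}$, whereas your argument (correctly) produces a free product indexed by the possibly infinite vertex set of $H\backslash T$ and a free group of possibly infinite rank; the finite form is what survives in the paper's applications, where the ambient groups are finitely generated and one passes to a Grushko-type decomposition, so this is a matter of phrasing rather than a gap.
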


Recall that a finitely generated group is one-ended if it does not split as an HNN-extension or a non-trivial amalgamated free product with finite edge group (see \cite{Sta71}). One can relativize the notion of 
one-endeness by defining a (f.g.) group $G$ to be one-ended relative to a subgroup $H$ if $G$ does not split non-trivially over a finite subgroup where $H$ is conjugate in a vertex group of the splitting. 
Moioli (Andr\'e in the relative case) proved that one-ended (relatively one-ended) hyperbolic groups are co-Hopfian. This is the content of the next fact.

\begin{fact}[Andr\'e \cite{And22}, Moioli \cite{Moi13}]
Let $G$ be a hyperbolic group and $H$ a (maybe trivial) finitely generated subgroup of $G$. Suppose $G$ is one-ended relative to $H$. Then every monomorphism of $G$ that fixes $H$ pointwise is an automorphism.
\end{fact}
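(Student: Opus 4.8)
The statement is precisely the (relative) \emph{co-Hopf property} for relatively one-ended hyperbolic groups, and I would prove it by the Bestvina--Paulin limiting construction combined with the Rips machine and Sela's shortening argument, everything carried out relative to $H$ (when $H=1$ this is just ``one-ended hyperbolic $\Rightarrow$ co-Hopfian''). First I would suppose, for a contradiction, that $\phi\colon G\to G$ is a monomorphism fixing $H$ pointwise which is \emph{not} surjective, and reduce to the case where the iterates $\phi^{n}$ are pairwise non-conjugate and have unbounded translation length on a fixed Cayley graph of $G$: in the opposite case only finitely many $\phi^{n}$ occur up to conjugacy, so the $\phi^{n}$ accumulate onto a single homomorphism, and since $G$ is Hopfian (recorded above) this would force $\phi$ to be an automorphism, contrary to assumption.

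Next I would form the Bestvina--Paulin limit. Fixing a finite generating set $S$ and letting $G$ act on its $\delta$-hyperbolic Cayley graph $X$ via $g\cdot x=\phi^{n}(g)x$, I would rescale the metric by $1/\lambda_{n}$, where $\lambda_{n}=\inf_{x}\max_{s\in S}d(x,\phi^{n}(s)x)\to\infty$, and pass to an ultralimit; this produces a nontrivial minimal isometric action of $G$ on an $\mathbb R$-tree $T$. Because each $\phi^{n}$ fixes $H$, the basepoints can be chosen so that $H$ displaces them a uniformly bounded amount, whence $H$ fixes a point of $T$. The action is stable and its arc stabilizers are small, in fact virtually cyclic since $G$ is hyperbolic, so Guirardel's form of the Rips machine applies and decomposes $T$ into surface (quadratically hanging), axial/linear, and Levitt/exotic components.

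This structure then translates into a nontrivial graph-of-groups decomposition of $G$ over small subgroups in which $H$ is elliptic. Since $G$ is one-ended \emph{relative to} $H$, no edge group of such a splitting can be finite, so all edge groups are two-ended and we obtain a splitting of $G$ over virtually cyclic subgroups with $H$ elliptic, carrying the usual relative JSJ picture: quadratically hanging vertices supporting the surface components, virtually-cyclic-by-finite vertices supporting the linear components, and rigid vertices whose relative outer automorphism groups are finite (Paulin, Bowditch). Finally I would run Sela's shortening argument relative to $H$: using the decomposition of $T$, for all large $n$ one precomposes $\phi^{n}$ with a suitable element $\tau$ of the relative modular group $\mathrm{Mod}_{H}(G)$ --- generated by Dehn twists along the virtually cyclic splittings and by natural automorphisms of the surface pieces, all fixing $H$ --- and then conjugates, so that the displacement $\inf_{x}\max_{s\in S}d(x,\psi(s)x)$ of the new homomorphism $\psi$ is strictly smaller. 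This contradicts having chosen each $\phi^{n}$ shortest within its $\bigl(\mathrm{Mod}_{H}(G)\rtimes\mathrm{Inn}(G)\bigr)$-orbit from the outset, so no non-surjective $H$-fixing monomorphism of $G$ can exist.

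The hard part is the shortening argument in the presence of torsion: it requires Guirardel's extension of the Rips machine to stable actions on $\mathbb R$-trees with virtually cyclic arc stabilizers, the finiteness of the \emph{relative} outer automorphism groups of the rigid vertex groups of a hyperbolic group, and a careful check that every automorphism used in the shortening step genuinely fixes $H$ --- it is precisely this relativization, keeping $H$ elliptic in the limit tree, in the Rips decomposition, and in the modular group, that upgrades Moioli's absolute theorem to André's relative statement.
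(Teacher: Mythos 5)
This statement is not proved in the paper at all: it is imported as a Fact with citations to Andr\'e \cite{And22} and Moioli \cite{Moi13}, and your sketch is precisely the strategy of those references (Bestvina--Paulin limit, Rips machine with torsion, shortening argument relative to $H$). So the only question is whether your outline is sound, and as written it has one genuinely flawed step: the reduction at the start. If the rescaling factors of the iterates $\phi^n$ stay bounded, you say that finitely many $\phi^n$ occur up to conjugacy, the $\phi^n$ ``accumulate onto a single homomorphism'', and Hopfianity forces $\phi$ to be an automorphism. Hopfianity concerns \emph{surjective} endomorphisms, and no surjection has been produced; what boundedness actually gives is a relation $\phi^m=\mathrm{ad}(h)\circ\phi^n$ with $m>n$, hence $h\,\phi^n(G)\,h^{-1}=\phi^m(G)\subsetneq\phi^n(G)$ if $\phi$ is not onto. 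A finitely generated subgroup of a hyperbolic group \emph{can} be conjugate to a proper subgroup of itself, and one cannot cancel $\phi^n$ to make $\phi^{m-n}$ inner unless $h\in\phi^n(G)$, so this is not yet a contradiction; dealing with this case is part of the real content of Moioli's theorem. (Ironically, the relative case with $H$ infinite is the easy one here: since $\phi^n$ fixes $H$ pointwise, the normalizing conjugators can be taken in $H\leq\phi^n(G)$, and then injectivity of $\phi^n$ does give $\phi^{m-n}=\mathrm{ad}(c)$ with $c\in H$, so $\phi$ is onto; while for $H$ finite or trivial the statement reduces to the absolute, hardest, case.)

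A related relativization point is glossed over in the shortening step: you minimize over the orbit under $\mathrm{Mod}_H(G)\rtimes\mathrm{Inn}(G)$, but postcomposition by an arbitrary inner automorphism destroys the property ``fixes $H$ pointwise'', so in the relative setting one must either restrict the conjugations or work with basepoints controlled by $H$ (this is how Andr\'e sets it up); also note that the $\phi^n$ themselves are given, so the correct formulation is to replace each $\phi^n$ by a shortest representative of its orbit and derive the contradiction for those. The rest of your outline (ellipticity of $H$ in the limit tree via vanishing translation lengths, elementary arc stabilizers, the Rips decomposition, exclusion of finite edge groups by relative one-endedness) is the right road map, but it defers exactly the hard ingredients --- the Rips machine and shortening in the presence of torsion and the finiteness statements for relative modular groups --- to the literature being cited, so it should be regarded as a faithful outline of the cited proofs rather than an independent argument, and the bounded-displacement step needs to be repaired as above.
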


We recall that a set of subgroups $\{A_1, \ldots, A_n\}$ of a group $G$ is called {\em malnormal}, if whenever $A_i\cap A_j^g\neq \{1\}$, then $i=j$ and $g\in A_i$. Straightforwardly, the set of free factors of a free product is a malnormal set of subgroups.

\begin{theorem}\label{CoxHom2}
The universal Coxeter group of rank $2$ is $\exists$-homogeneous.
\end{theorem}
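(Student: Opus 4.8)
Recall that $W_2 = \langle e_1 \rangle * \langle e_2 \rangle \cong \mathbb{Z}_2 * \mathbb{Z}_2 \cong D_\infty$, the infinite dihedral group. Write $t = e_1 e_2$, so that $\langle t \rangle$ is an infinite cyclic subgroup of index $2$, normal in $W_2$, and every element of $W_2$ is either of the form $t^k$ or of the form $t^k e_1$ (the latter being precisely the involutions). The plan is to show that whenever two tuples $\bar a, \bar b$ from $W_2$ have the same existential (equivalently, by symmetry of the argument, the same $\exists$-) type, there is an automorphism of $W_2$ sending $\bar a$ to $\bar b$. Since $W_2$ is abelian-by-finite and its automorphism group is completely explicit, the strategy is to reduce "same existential type" to a finite, checkable list of invariants and then exhibit the automorphism directly.

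First I would record the structure of $\mrm{Aut}(W_2)$. Every automorphism preserves the set of involutions (Lemma \ref{Orbits}) and hence preserves $\langle t \rangle$ (the unique index-$2$ subgroup with no involutions other than $1$); on $\langle t \rangle$ it acts as $t \mapsto t^{\pm 1}$, and it can send $e_1$ to any involution $t^k e_1$. Concretely, $\mrm{Aut}(W_2) \cong D_\infty$ again, generated by the inner automorphisms together with the "inversion" $\iota \colon t \mapsto t^{-1}$, $e_1 \mapsto e_1$. Thus the $\mrm{Aut}(W_2)$-orbit of a tuple $\bar a = (a_1, \dots, a_r)$ is determined by: (i) which coordinates are powers of $t$ versus involutions; (ii) writing $a_i = t^{k_i}$ or $a_i = t^{k_i} e_1$, the tuple of exponents $(k_i)$ up to the simultaneous transformations $k_i \mapsto k_i + c$ (translation, from conjugation by $t^{c}$... more precisely conjugation by $e_1$ sends $t^k \mapsto t^{-k}$ and $t^k e_1 \mapsto t^{-k}e_1$, and conjugation by $t^c$ shifts the involution-exponents by $2c$ while fixing the $t$-powers) and the inversion $k_i \mapsto -k_i$. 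So the orbit invariant is a finite combinatorial datum attached to $\bar a$.

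The crux is then: \emph{if $\bar a$ and $\bar b$ realize the same existential type, they have the same orbit invariant}. For this I would exhibit, for each piece of the invariant, an existential formula (with parameters from the tuple) that detects it. Being an involution is $\exists$-free ($x^2 = 1 \wedge x \neq 1$, as in Lemma \ref{Orbits}). Being a power of $t$, i.e. lying in the canonical index-$2$ subgroup, is captured by $x$ being a product of two involutions in a controlled way, or more simply by "$x$ commutes with some fixed involution-free witness" — but the cleanest route is: $a$ and $a'$ lie in the same coset of $\langle t \rangle$ and the relation "$a a'^{-1} = t^{\ell}$ for a specified $\ell$" is expressible by an existential formula asserting the existence of a chain of involutions realizing $t^\ell$ as a word of length $2|\ell|$ (one uses that $t^\ell = (e_1 e_2)^\ell$ and every involution is a conjugate of $e_1$; the key point is that in $D_\infty$, whether a given element is a specified power of $t$ is a positive existential condition because the equation $y_1 y_2 = x$ with $y_i$ involutions has solutions, and iterating pins down the exponent up to sign). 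One then checks that the finitely many numerical relations among the $k_i$ (equalities $k_i = k_j$, $k_i = -k_j$, specified differences $k_i - k_j = \ell$) that constitute the orbit invariant are each equivalent to the solvability of an explicit existential sentence over $\bar a$. Hence equal existential types force equal orbit invariants, and then the automorphism transporting $\bar a$ to $\bar b$ is produced directly from the description of $\mrm{Aut}(W_2)$ above. \textbf{The main obstacle} I anticipate is the bookkeeping in step three: verifying that the sign ambiguity ($k_i \mapsto -k_i$) and the translation ambiguity are \emph{exactly} what the existential type fails to see — i.e. that no existential formula distinguishes $t$ from $t^{-1}$ relative to $e_1$ — which amounts to observing that $\iota$ is an automorphism (so it trivially preserves all types) and that, conversely, the listed relations exhaust the first-order-detectable structure of a tuple in $D_\infty$. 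Everything else is routine dihedral-group arithmetic.
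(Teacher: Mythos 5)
Your route is entirely different from the paper's (which extracts an endomorphism from an existential formula, analyses its image via the Kurosh subgroup theorem and the action on the Bass--Serre line, and then invokes Hopficity and relative co-Hopficity), and working directly with $D_\infty$ and its explicit automorphism group is a legitimate alternative in rank $2$; but as written the decisive step has a genuine gap. The crux of your argument is the claim that the orbit invariant --- the exponents $k_i$, up to a global sign and a common translation on the involution coordinates --- is detected by the existential type, and the mechanism you offer for this does not work. The relation ``$a a'^{-1}=t^{\ell}$ for a specified $\ell$'' is not $\emptyset$-definable at all (the automorphism $t\mapsto t^{-1}$, $e_1\mapsto e_1$ sends $t^{\ell}$ to $t^{-\ell}$), and the existential formula you propose --- the existence of a chain of involutions of length $2|\ell|$ representing the element --- pins down nothing: already a product of two involutions $t^{a}e_1\cdot t^{b}e_1=t^{a-b}$ realizes \emph{every} power of $t$, so the length of such a chain carries no information about the exponent, and ``adjacency'' of reflections (which is what would single out $t^{\pm1}$) is not an existential condition. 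Consequently the assertion that ``the listed relations exhaust the first-order-detectable structure'' is precisely the point at issue, and it is never proved.

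The gap is repairable, but with different formulas. For the nonzero elements of $\langle t\rangle$ arising from the tuple (the non-involution coordinates $a_p$ and the products $a_ia_j$ of involution coordinates), the magnitude of the exponent is determined by the divisibility conditions $\exists y\,(y^{n}=a_p)$, resp.\ $\exists y\,(y^{n}=a_ia_j)$, for all $n\geq 2$ (any such $y$ must lie in $\langle t\rangle$ when the right-hand side is a nontrivial power of $t$), and these are existential; the relative signs within the family $\{k_p\}\cup\{k_i-k_j\}$ are determined by the quantifier-free equations $a_p^{m}=a_q^{n}$, $(a_ia_j)^{m}=(a_ka_l)^{n}$, $(a_ia_j)^{m}=a_p^{n}$. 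Equality of existential types transfers all of these in both directions, and magnitudes together with relative signs determine the tuple up to an automorphism $t\mapsto t^{\pm1}$, $e_1\mapsto t^{c}e_1$, which is what you need. If you carry this out you obtain a correct, purely dihedral proof of Theorem~\ref{CoxHom2}; note, however, that unlike the paper's argument it is special to rank $2$ and does not extend to $W_n$ for $n\geq 3$ or to the hyperbolic right-angled case, which is exactly what the Kurosh/Hopf/co-Hopf machinery of the paper is designed to handle.
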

\begin{proof}
Let $\bar{a},\bar{b}$ be two tuples in $W_2$ with $tp_{\exists}(\bar{a})=tp_{\exists}(\bar{b})$. Consider $\bar{a}(x_1, x_2)$ a tuple of words in the variables $x_1, x_2$ such that $\bar{a}(e_1, e_2)=\bar{a}$ and likewise for $\bar{b}(\bar{x})$. The following existential formula will ensure that an injective endomorphism of $W_2$, taking $\bar{a}$ to $\bar{b}$, exists. 

$$\phi(\bar{u}):=\exists x_1, x_2 (\bigwedge_{i\leq 2} x_i^2=1 \land x_1\neq x_2 \bigwedge_{i\leq 2} x_i\neq 1 \land \bar{u}=\bar{a}(\bar{x})) $$ 
We obviously have that $W_2\models \phi(\bar{a})$, since we can plug in the standard generators in the existentially quantified variables and get $\bar{a}$. By the equality of $\exists$-types we have that $W_2\models \phi(\bar{b})$. Hence there exists an endomorphism $h:W_2\rightarrow W_2$  that sends $\bar{a}$ to $\bar{b}$. Moreover, since no $e_i$ is killed this endomorphism is injective on the factors of $W_2$. Hence, by the Kurosh subgroup theorem, each factor is sent to a conjugate of some factor, but, because $x_1\neq x_2$ in the above formula, no two factors are sent to the same conjugate (it could be though that the two distinct factors are sent to distinct conjugates of the same factor). Therefore $h(W_2)=\langle a_1^{g_1}, a_2^{g_2} \rangle$, where, $a_i$, for $i\leq 2$, are standard generators (not necessarily distinct). Now, the tree which $W_2$ acts on, corresponding to the natural free splitting $\mathbb{Z}_2*\mathbb{Z}_2$, is a line. The minimal subtree which $h(W_2)$ acts on is again the whole line, which is actually the axis of the hyperbolic element $a_1^{g_1}\cdot a_2^{g_2}$. Note that $a_1^{g_1}\cdot a_2^{g_2}$ is hyperbolic because by the malnormality of the free factors we have $\langle a_1^{g_1}\rangle\cap \langle a_2^{g_2}\rangle=\{1\}$. Now, if $a_1$, $a_2$ are distinct generators, then the ``fundamental domain" of the action is the (closed) segment connecting the fixed points of $a_1^{g_1}$, $a_2^{g_2}$ (including these vertices). Since all edges have trivial stabilizers we get that $h(W_2)$ is the free product $A_1^{g_1}*A_2^{g_2}$. 
 \begin{figure}[ht!]
\centering
\includegraphics[width=0.8\textwidth]{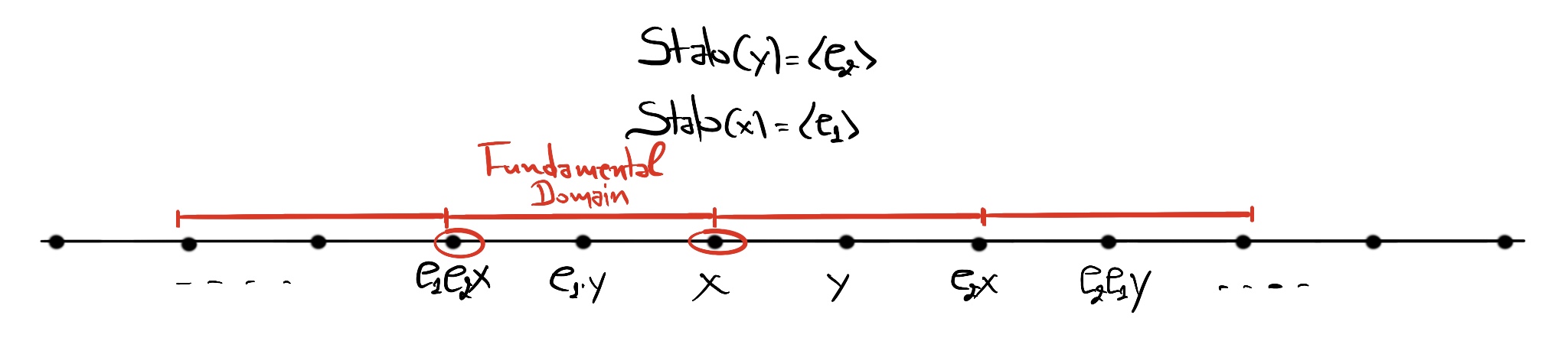}
\caption{The ``fundamental domain" of $\langle e_1, e_1^{e_1e_2}\rangle$.}
\end{figure}
We now assume that $a_1=a_2=e_1$ and without loss of generality $g_1=1$. In this case, we might have that the fundamental domain of the action includes just one of the fixed vertices and not the whole (closed) segment connecting the vertex fixed by $e_1$ and the vertex fixed by $e_1^{g_2}$. In particular, this implies that $g_2$ belongs to $\langle e_1, e_1^{g_2}\rangle$. But this is not possible since $g_2$ can be assumed to start with $e_2$ (if not $\langle e_1, e_1^{e_1g_2}\rangle$ generates the same group and $e_1g_2$ starts with $e_2$) and every reduced element of $\langle e_1, e_1^{g_2}\rangle$, involving the generator $e_1^{g_2}$, has length strictly larger than $g_2$. Hence, again $\langle e_1, e_1^{g_2}\rangle$ is the free product $\langle e_1\rangle * \langle e_1^{g_2}\rangle$.

\smallskip \noindent Consequently, in all cases, $h(W_2)\cong W_2$ and by Hopfianity of $W_2$, we get that $h$ is injective. The analogous argument shows that we have an injective endomorphism $f$ of $W_2$ that sends $\bar{b}$ to $\bar{a}$. We can now take $f\circ h$ that fixes $\bar{a}$ and is injective. We skip the proof in cases.
\\
\begin{itemize}

\item[\underline{Case 1}:] $W_2$ is one-ended with respect to $\bar{a}$ (or rather the subgroup generated by~$\bar{a}$).
\newline By (relative) co-Hopfianity $f\circ h$ is an automorphism. Hence $h$ is an automorphism and we are done. 
\\ 
\item[\underline{Case 2}:] $W_2$ is not one-ended with respect to $\bar{a}$.
\newline Suppose that $W_2$ is an amalgamated free product $A*_C B$ or an HNN-extension $A*_C$ with a finite edge group and  $\bar{a}$ belongs to a vertex group. It is fairly easy to see that if a group splits as an HNN-extension, then it admits an epimorphism onto $\mathbb{Z}$, but this is impossible for $\mathbb{Z}_2*\mathbb{Z}_2$ (and similarly for any other free product of finite groups). Hence, we can restrict our attention to amalgamated free products. Since any finite non-trivial subgroup in $W_2$ is isomorphic to $\mathbb{Z}_2$ the edge group of the splitting must be either trivial or isomorphic to $\mathbb{Z}_2$. We assume, for a contradiction, that the edge group is nontrivial. 
Kurosh Subgroup Theorem implies that each of the factors $A$ and $B$ is a free product of conjugates of free factors $\mathbb{Z}_2$ of $W_2$ together with a free group. The edge group, being finite non-trivial, must embed in both sides into a non-free factor, hence $A$ and $B$ both do not have a free group in their decomposition. Otherwise $A*_CB=(\mathbb{F}*A_1*A_2*\ldots *A_n)*_C(\mathbb{F}'*B_1*B_2*\ldots *B_m)$ admits an epimorphism to $\mathbb{F}*\mathbb{F}'$, which is impossible for a free product of finite groups. 
We next, without loss of generality, assume that $\bar{a}$ belongs to $A$ and (up to applying an inner automorphism on $B$) the edge group $C$ embeds onto some $B_i$, for $i\leq m$. In particular, the amalgamated free product may be viewed as a free product $A*B_1*B_2*\ldots B_{i-1}*B_{i+1}*B_m$. The latter is $W_2$, hence we either have $n=$1 and $m=2$ or $n=2$ and $m=1$. But in either case $\bar{a}$ belongs to a proper free factor of $W_2$. 
Hence, if $W_2$ is not one-ended with respect to $\bar{a}$, then $\bar{a}$ belongs to a proper free factor of $W_2$ and the result follows from  Lemma \ref{Orbits}.
\end{itemize}

\end{proof}

Generalizing the last part of the proof to any $W_n$ we get the following:

\begin{lemma}\label{Splittings}
Let $W_n$ be a universal Coxeter group of rank $n$. Suppose $W_n$ splits as a nontrivial amalgamated free product $A*_CB$ over a finite group $C$. 
Then  $W_n$ splits as a free product $A*B'$ where $B'$ is a free factor of a conjugate of $B$.
\end{lemma}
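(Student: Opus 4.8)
Let $W_n$ be a universal Coxeter group of rank $n$. Suppose $W_n$ splits as a nontrivial amalgamated free product $A *_C B$ over a finite group $C$. Then $W_n$ splits as a free product $A * B'$ where $B'$ is a free factor of a conjugate of $B$.

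**Plan of proof.** The idea is to repeat the combinatorial analysis carried out in Case 2 of the proof of Theorem~\ref{CoxHom2}, now in the generality of $W_n$. First I would dispose of the trivial edge group case: if $C = \{1\}$ then $A *_C B = A * B$ and there is nothing to prove, so assume $C$ is nontrivial. Since every finite subgroup of $W_n$ is conjugate into some $\langle e_i\rangle$ (this follows from the action of $W_n$ on its Bass--Serre tree for the splitting $\mathbb{Z}_2 * \cdots * \mathbb{Z}_2$, as used in Lemma~\ref{Orbits}), the edge group $C$ is isomorphic to $\mathbb{Z}_2$.

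Next I would apply the Kurosh Subgroup Theorem to each of $A$ and $B$ as subgroups of $W_n = \langle e_1\rangle * \cdots * \langle e_n\rangle$: each of $A$, $B$ decomposes as a free product of conjugates of some of the $\langle e_i\rangle$ together with a free group. The key observation is that the finite nontrivial group $C \cong \mathbb{Z}_2$ must embed into each of $A$ and $B$ in such a way that its image lies inside one of the $\mathbb{Z}_2$-free-factors (a nontrivial finite subgroup of a free product lands, up to conjugacy, in a conjugate of a factor group — here the only finite factors are the $\langle e_i\rangle$). Consequently neither $A$ nor $B$ can have a nontrivial free group in its Kurosh decomposition: if it did, say $A *_C B = (\mathbb{F} * A_1 * \cdots * A_p) *_C (\mathbb{F}' * B_1 * \cdots * B_q)$ with $\mathbb{F}$ or $\mathbb{F}'$ nontrivial, then collapsing along $C$ would exhibit an epimorphism onto $\mathbb{F} * \mathbb{F}'$, which is a nontrivial free group, and hence an epimorphism onto $\mathbb{Z}$; but a free product of finite groups has finite abelianization and admits no epimorphism onto $\mathbb{Z}$, a contradiction.

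So $A$ and $B$ are each free products of conjugates of the $\langle e_i\rangle$. Identifying $C$ with one of the free factors of $B$, say (after conjugating $B$) $C = B_i$ for some $i$, the amalgamated product $A *_C B$ is then isomorphic to the free product $A * B_1 * \cdots * B_{i-1} * B_{i+1} * \cdots * B_q$, i.e. to $A * B'$ where $B' := B_1 * \cdots * B_{i-1} * B_{i+1} * \cdots * B_q$ is a free factor of $B$ (and hence of a conjugate of $B$ in the original $B$). This is the desired free splitting $W_n = A * B'$. The main obstacle, and the step requiring care, is the Kurosh bookkeeping together with the claim that $C$ must land in a non-free factor of both sides — this is what forces the free groups $\mathbb{F}, \mathbb{F}'$ to be trivial and makes the collapse along $C$ actually produce a free product rather than something more complicated. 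One should also double check that identifying $C$ with a free factor of $B$ (rather than merely a conjugate of one) is legitimate after replacing $B$ by a conjugate, which is harmless since the statement only asks that $B'$ be a free factor of \emph{a conjugate} of $B$.
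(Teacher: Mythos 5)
Your argument is correct and follows essentially the same route as the paper's proof: apply the Kurosh Subgroup Theorem to $A$ and $B$, note that a nontrivial $C$ is $\mathbb{Z}_2$ and lands in a conjugate of a $\mathbb{Z}_2$-factor of $B$ (hence is itself a free factor of $B$), and collapse $A*_C B$ to $A*B'$. Your additional step eliminating the free parts $\mathbb{F},\mathbb{F}'$ via the surjection onto $\mathbb{F}*\mathbb{F}'$ is exactly the argument from Case~2 of Theorem~\ref{CoxHom2} and, while not strictly needed for the collapse, is sound.
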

\begin{proof}
By Kurosh subgroup theorem $A=A_1*\ldots*A_n*\mathbb{F}$ and $B=B_1*\ldots B_m*\mathbb{F}'$, where each $A_i$ and $B_j$ are conjugates of Grushko factors of $W_n$. 
If $C$ is trivial, then we are done. If $C$ is nontrivial, then it is isomorphic to $\mathbb{Z}_2$ and must embed in both sides onto a conjugate of a non-free factor, say $gB_ig^{-1}$, for some $i\leq m$ and $g\in B$. 
%
Finally, $A*_CB=A*_C( B_1*\ldots*B_m*\mathbb{F}')$ and up to an inner automorphism of $B$ (equivalently change of the fundamental domain of the corresponding action of $A*_CB$ on the standard simplicial tree by fixing $B$ and taking $A$ to $gAg^{-1}$) we may assume that $C$ embeds onto a free factor $B_i$. In particular,  $A*_CB=(A_1*\ldots*A_n)*g^{-1}(B_1*B_2*\ldots *B_{i-1}*B_{i+1}*\ldots B_m)g$ as needed. 
\end{proof}

The following lemma is an immediate corollary of Proposition 3.23 in \cite{MPS22}.

\begin{lemma}\label{PingPong}
Let $W_n:=\langle e_1 \ | \ e_1^2\rangle*\ldots *\langle e_n \ | \ e_n^2\rangle$ be the universal Coxeter group of rank $n$. Let $A_1, A_2,\ldots, A_m$, for $m\leq n$, be conjugates of distinct factors of $W_n$. Then $\langle A_1, \ldots, A_m\rangle=A_1*A_2*\ldots *A_m$. 
\end{lemma}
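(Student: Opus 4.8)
The plan is to prove Lemma~\ref{PingPong} by invoking the action of $W_n$ on its standard Bass--Serre tree $T$ coming from the free product decomposition $\langle e_1\rangle * \cdots * \langle e_n\rangle$, and running a ping-pong / minimal-subtree argument. First I would fix conjugates $A_i = g_i \langle e_{k_i}\rangle g_i^{-1}$ for pairwise distinct indices $k_1,\dots,k_m$, so each $A_i$ is a group of order two fixing a single vertex $v_i = g_i \cdot v_{k_i}$ of $T$ (here $v_j$ denotes the vertex stabilized by the standard factor $\langle e_j\rangle$). Since the $k_i$ are distinct, the vertices $v_1,\dots,v_m$ are pairwise distinct — indeed if $v_i = v_j$ then $g_i^{-1}g_j$ would conjugate $\langle e_{k_j}\rangle$ into $\langle e_{k_i}\rangle$, which by malnormality of the free factors (and distinctness of the generators $e_j$ up to conjugacy) forces $k_i = k_j$. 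So we are in the classical setting of finitely many finite subgroups of a group acting on a tree, each fixing a distinct vertex.

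The key step is then the standard criterion: a family of subgroups $A_1,\dots,A_m$ of a group $G$ acting on a tree $T$, such that each $A_i$ fixes a vertex $v_i$ with the $v_i$ pairwise distinct and each $A_i$ acting on $T$ with no common fixed point with any $A_j$, generates their free product provided one can separate their fixed vertices. Concretely I would pass to the convex hull (minimal subtree) $T_0$ of $\{v_1,\dots,v_m\}$, which is a finite subtree, and observe that no nontrivial element of $A_i$ fixes any edge of $T$ (edge stabilizers are trivial in a free product), so each $A_i$ acts on $T$ with the single fixed point $v_i$. A word $w = a_{i_1} a_{i_2}\cdots a_{i_\ell}$ with each $a_{i_t}\in A_{i_t}\setminus\{1\}$ and consecutive indices distinct can be shown to move some vertex a positive distance — one tracks the image of $v_{i_\ell}$ (or a vertex just beyond it away from $v_{i_{\ell-1}}$) and checks that each successive application of $a_{i_t}$ strictly increases the distance to $v_{i_1}$, using that $a_{i_t}$ is the nontrivial involution fixing $v_{i_t}$ and hence flips the two half-trees at $v_{i_t}$. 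This is precisely the shape of the ping-pong argument in Proposition~3.23 of \cite{MPS22}, so I would cite that proposition directly for the combinatorial core rather than re-derive it, which is exactly what the lemma statement advertises (``immediate corollary of Proposition 3.23 in \cite{MPS22}'').

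Concretely the proof reduces to: (i) translate the hypothesis ``$A_i$ are conjugates of distinct factors'' into ``$A_i$ are order-two subgroups fixing pairwise distinct vertices of $T$ with trivial edge stabilizers'' — a couple of lines using malnormality of free factors; (ii) quote Proposition~3.23 of \cite{MPS22}, whose hypotheses are now visibly satisfied, to conclude $\langle A_1,\dots,A_m\rangle = A_1 * \cdots * A_m$. The only thing to be a little careful about is the degenerate interpretation when $m = 1$ (trivially true) and making sure ``distinct factors'' is used to get distinct fixed vertices rather than merely distinct subgroups; as noted, distinctness of the vertices is what the ping-pong needs. I expect no genuine obstacle here: the main content is already isolated in the cited proposition, and the present lemma is the specialization to conjugates of the $\mathbb{Z}_2$-factors of $W_n$. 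The one place where care is needed — and the reason the ``distinct factors'' hypothesis cannot be dropped — is step (i): two conjugates of the \emph{same} factor can fail to generate a free product (e.g.\ $\langle e_1, e_1^{e_2 e_1}\rangle$ considerations in the proof of Theorem~\ref{CoxHom2}), so the argument genuinely uses that the $k_i$ are pairwise distinct to guarantee the $v_i$ are pairwise distinct and the convex hull is a nondegenerate tree on which ping-pong applies.
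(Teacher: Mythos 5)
Your proposal is correct and follows essentially the same route as the paper, which gives no standalone argument but simply derives the lemma as an immediate consequence of Proposition~3.23 of \cite{MPS22}, exactly as your step (ii) does. Your preliminary translation (conjugates of distinct factors fix vertices in distinct $W_n$-orbits of the Bass--Serre tree, edge stabilizers are trivial) and the ping-pong sketch are just an expansion of the combinatorial core that the paper delegates to that citation.
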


\begin{theorem}
Let $W_n:=\langle e_1 \ | \ e_1^2\rangle*\ldots *\langle e_n \ | \ e_n^2\rangle$ be the universal Coxeter group of rank $n$. Then $W_n$ is homogeneous.
\end{theorem}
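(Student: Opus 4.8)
The plan is to follow the template established by the rank-$2$ case in Theorem~\ref{CoxHom2}, but organized so that the combinatorial subtleties of conjugates of free factors are handled uniformly via Lemma~\ref{Splittings} and Lemma~\ref{PingPong}. First I would take two tuples $\bar a, \bar b$ in $W_n$ with $\mathrm{tp}(\bar a) = \mathrm{tp}(\bar b)$ (in fact it will be enough to work with $\exists$-types to produce the endomorphisms, and then use the full type only at the very end). Writing $\bar a = \bar a(e_1, \dots, e_n)$ as words in the standard generators, and similarly $\bar b$, I would consider the existential formula
$$\phi(\bar u) := \exists x_1, \dots, x_n \Bigl( \bigwedge_{i \leq n} x_i^2 = 1 \wedge \bigwedge_{i \leq n} x_i \neq 1 \wedge \bigwedge_{i < j \leq n} x_i \neq x_j \wedge \bar u = \bar a(\bar x) \Bigr).$$
Since plugging in the standard generators witnesses $W_n \models \phi(\bar a)$, equality of $\exists$-types gives $W_n \models \phi(\bar b)$, hence there is an endomorphism $h : W_n \to W_n$ with $h(\bar a) = \bar b$ sending each $\langle e_i \rangle$ injectively, with distinct factors going to distinct conjugates of (possibly equal) factors. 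By the Kurosh subgroup theorem $h(W_n)$ is generated by $n$ conjugates $A_1^{g_1}, \dots, A_n^{g_n}$ of factors, these conjugates being of \emph{distinct} factors as subgroups only in the sense that no two coincide; but then Lemma~\ref{PingPong} applies provided these are conjugates of distinct factors — here one must argue, exactly as in the $n = 2$ case (the $a_1 = a_2$, $g_1 = 1$ sub-case), that one can always arrange representatives that are conjugates of pairwise distinct factors, so that Lemma~\ref{PingPong} yields $h(W_n) = A_1^{g_1} * \dots * A_n^{g_n} \cong W_n$. Hopfianity of $W_n$ then forces $h$ to be injective. The same construction, with $\bar a$ and $\bar b$ swapped, gives an injective endomorphism $f$ with $f(\bar b) = \bar a$, so $f \circ h$ is an injective endomorphism fixing $\bar a$.

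The second half splits into the same two cases as before. If $W_n$ is one-ended relative to the subgroup generated by $\bar a$, then by the Moioli--Andr\'e fact $f \circ h$ is an automorphism, hence so is $h$, and we are done. If $W_n$ is \emph{not} one-ended relative to $\bar a$, then $W_n$ splits nontrivially over a finite subgroup with $\bar a$ in a vertex group. An HNN-splitting is impossible since it would give an epimorphism onto $\mathbb{Z}$, which a free product of finite groups does not admit; so $W_n = A *_C B$ with $C$ finite and $\bar a \in A$ (up to conjugacy). Now Lemma~\ref{Splittings} rewrites this as a free product $W_n = A * B'$ with $\bar a \in A$; since the Grushko decomposition of $W_n$ is $\mathbb{Z}_2 * \dots * \mathbb{Z}_2$ ($n$ factors) and is unique up to conjugacy and permutation, $A$ must itself be a free product of fewer than $n$ conjugates of factors, i.e.\ $\bar a$ lies in a proper free factor of $W_n$ isomorphic to some $W_k$ with $k < n$. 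Iterating (or invoking uniqueness of Grushko decompositions directly), $\bar a$ lies in a free factor $V$ of $W_n$ which is of minimal rank with this property, and $W_n$ is one-ended relative to $\bar a$ inside $V$. Here I would want an induction on $n$: by the inductive hypothesis $V \cong W_k$ is homogeneous, and one needs to promote a type-equality inside $W_n$ to a type-equality inside the free factor, then extend an automorphism of $V$ to one of $W_n$ (which is immediate, acting as the identity on the complementary free factors) — so the reduction does close.

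The main obstacle I expect is precisely the bookkeeping in the first paragraph: controlling the images $A_i^{g_i}$ when several of the $e_i$ map into conjugates of the \emph{same} factor. In the rank-$2$ proof this was the delicate sub-case where $a_1 = a_2 = e_1$ and one had to show $g_2 \notin \langle e_1, e_1^{g_2}\rangle$ by a reduced-word-length argument and conclude the subgroup is still a free product. For general $n$ one needs the analogous statement: any collection of $n$ conjugates of factors of $W_n$ that pairwise generate (via the ping-pong lemma, Lemma~\ref{PingPong}, after replacing representatives by conjugates) a subgroup isomorphic to $W_n$ must in fact be $W_n$ itself up to the choice of representatives — equivalently, that one can always normalize the $g_i$ so that the $A_i^{g_i}$ are conjugates of \emph{distinct} factors. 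This is where the hypothesis $x_i \neq x_j$ in $\phi$ is used, together with the structure of the Bass--Serre tree (a general tree now, not a line), and it is the step that requires the most care; once it is in place, Lemma~\ref{PingPong} and Hopfianity finish the injectivity argument mechanically, and the rest is the co-Hopfian dichotomy exactly as in Theorem~\ref{CoxHom2}. The other, more routine, point to be careful about is the reduction to $\exists$-types versus full types: producing $h$ and $f$ needs only $\exists$-type equality, but applying the inductive hypothesis in Case~2 and concluding overall homogeneity needs the full type, so the theorem should be stated and proved for full types with the existential formulas used merely as the mechanism for building the endomorphisms.
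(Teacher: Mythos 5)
Your overall architecture (build injective endomorphisms $h$, $f$ from type equality via Kurosh and Hopfianity, then run the relative one-ended/co-Hopfian dichotomy) matches the paper, but at the crucial first step you diverge from it, and your version has a genuine gap. You use a purely existential formula with only the clauses $x_i\neq x_j$, and you then claim that when several generators land in conjugates of the \emph{same} factor ``one can always normalize the $g_i$ so that the $A_i^{g_i}$ are conjugates of pairwise distinct factors,'' so that Lemma~\ref{PingPong} applies. This normalization is not available: conjugates of the same factor remain conjugates of the same factor under any change of representatives, and Lemma~\ref{PingPong} genuinely requires conjugates of \emph{distinct} factors. Worse, the statement you would actually need --- that $n$ pairwise distinct conjugates of factors always generate their rank-$n$ free product --- is false: in $W_3$ the involutions $e_1$, $e_1^{e_2}=e_2e_1e_2$ and $e_1^{e_2e_1}=e_1e_2e_1e_2e_1$ are pairwise distinct conjugates of $\langle e_1\rangle$, yet the third equals $e_1\,(e_1^{e_2})\,e_1$, so they generate a group isomorphic to $\mathbb{Z}_2*\mathbb{Z}_2$, not $W_3$; an endomorphism with such an image defeats the Hopfian trick. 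The ad hoc length argument in the rank-$2$ case of Theorem~\ref{CoxHom2} does not extend mechanically, and your sketch does not supply a replacement. The paper avoids the problem entirely by working with $\exists\forall$-types (which is harmless, since full-type equality is assumed) and inserting the clause $\forall z\,\bigwedge_{i\neq j} x_i^z\neq x_j$ into the defining formula: the witnesses for $\bar b$ are then pairwise non-conjugate involutions, so by Kurosh they lie in conjugates of pairwise distinct factors and Lemma~\ref{PingPong} applies directly.

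Your Case 2 also departs from the paper in a way that creates a second, unresolved issue. You propose an induction on $n$, and you yourself note that this requires promoting type equality in $W_n$ to type equality inside the minimal free factor containing the tuple; you do not explain how to do this, and it is not automatic. The paper needs no induction and no such transfer: it takes the minimal free factors $W\ni\bar a$ and $W'\ni\bar b$ (one-ended relative to the tuples by Lemma~\ref{Splittings}), restricts the already-constructed injective maps $h$ and $f$ to them, observes that relative freely-indecomposable images must land in $W'$ and $W$ respectively, applies relative co-Hopfianity to $f\circ h$ on $W$ to get that $h\colon W\to W'$ is an isomorphism, and extends it to an automorphism of $W_n$. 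So both of the points you flag as ``the steps requiring the most care'' are exactly where your route breaks or is incomplete, and both are resolved in the paper by different means (a stronger formula/type fragment, and a restriction argument instead of induction).
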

\begin{proof}
Let $\bar{a},\bar{b}$ be two tuples in $W_n$ with $tp_{\exists\forall}(\bar{a})=tp_{\exists\forall}(\bar{b})$. Consider $\bar{a}(x_1, x_2, \ldots, x_n)$ a tuple of words in the variables $x_1, x_2, \ldots, x_n$ such that $\bar{a}(e_1, e_2, \ldots, e_n)=\bar{a}$ and likewise $\bar{b}(\bar{x})$. The following $\exists\forall$-formula will ensure that an injective endomorphism of $W_n$ exists:
$$\phi(\bar{u}):=\exists x_1, x_2, \ldots x_n \bigl(\bigwedge_{i\leq n} x_i^2=1 \land \forall z\bigwedge_{i\neq j \atop{ i,j\leq n}} x_i^z\neq{x_j}\bigwedge_{i\leq n} x_i\neq 1 \land \bar{u}=\bar{a}(\bar{x})\bigr) $$ 

\smallskip \noindent We obviously have that $W_n\models \phi(\bar{a})$, since we can plug in the standard generators in the existentially quantified variables and get $\bar{a}$. By the equality of $\exists\forall$-types we have that $W_n\models \phi(\bar{b})$. Hence there exists an endomorphism $h$  that sends $\bar{a}$ to $\bar{b}$. Moreover, since no $e_i$ is killed this endomorphism is injective on the factors of $W_n$. Hence, by the Kurosh subgroup theorem, and the $\forall$-clause in the above formula, each factor is sent to a conjugate of a distinct factor. Therefore, by Lemma \ref{PingPong}, $h(W_n)=\langle A_1, A_2, \ldots, A_n\rangle=A_1*A_2* \ldots * A_n$. Consequently, $h(W_n)\cong W_n$ and by Hopfianity of $W_n$, we get that $h$ is injective. The analogous argument shows that we have an injective endomorphism $f$ of $W_n$ that sends $\bar{b}$ to $\bar{a}$. We can now take $f\circ h$ that fixes $\bar{a}$ and is injective. We split the proof in cases. 
\\
\begin{itemize}

\item[\underline{Case 1}:] $W_n$ is one-ended with respect to $\bar{a}$.
\newline By (relative) co-Hopfianity $f\circ h$ is an automorphism. Hence $h$ is an automorphism and we are done. 
\\
\item[\underline{Case 2}:] $W_n$ is not one-ended with respect to $\bar{a}$.
\newline
Let $W$ be the smallest free factor of $W_n$ that contains $\bar{a}$. By Lemma \ref{Splittings} $W$ must be one-ended with respect to $\bar a$. Applying the symmetric argument to $\bar{b}$, we get a splitting of $W_n$ as $W'*W'_1$ such that $W'$ is one-ended with respect to $\bar{b}$. 
We consider the restrictions $h\upharpoonright_W$ and $f\upharpoonright_{W'}$, that we still call $h$ and $f$. Since $W'$ is freely indecomposable with respect to $\bar{b}$ the image $h(W)$ is contained in $W'$ and likewise the image of $f(W')$ is contained in $W$. Hence, we get that $f\circ h$ is an injective endomorphism of $W$, therefore, by relative co-Hopfianity, an automorphism. In particular $h:W\rightarrow W'$ is an isomorphism. This isomorphism extends naturally to $W_1$ in order to obtain an automorphism of $W_n$.
\end{itemize}
\end{proof}

The same ideas work locally when the Coxeter group is hyperbolic right-angled or even globally if in addition it is one-ended. The only modification is the change of the first-order formula ensuring that the morphism sending $\bar a$ to $\bar b$ in the above proof is injective. Such formula is provided by Lemma 6.3 in \cite{MPS22}.  

\begin{proposition}
Let $G$ be a hyperbolic right-angled Coxeter group and $\bar b$ a tuple from $G$. Suppose $G$ is one-ended relative to the subgroup generated by $\bar b$. Then $\bar b$ is type-determined, i.e., whenever $tp^G(\bar b)=tp^G(\bar c)$, then there is an automorphism of $G$ mapping $\bar b$ to $\bar c$.  In particular, if $G$ is one-ended, then it is homogeneous.
\end{proposition}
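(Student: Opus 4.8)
The strategy mirrors the proof of the universal Coxeter group case (the last displayed theorem), replacing the formula that forces injectivity of the candidate endomorphism by the appropriate first-order formula from \cite[Lemma~6.3]{MPS22}. Write $\bar b$ as a tuple of words $\bar b(\bar x)$ in the standard Coxeter generators $s_1,\dots,s_n$ of $G$, and similarly fix $\bar c(\bar x)$ with $\bar c(s_1,\dots,s_n)=\bar c$. Since $\mathrm{tp}^G(\bar b)=\mathrm{tp}^G(\bar c)$ (the full types agree), any first-order formula $\phi(\bar u)$ true of $\bar b$ is true of $\bar c$. The key is to produce, via \cite[Lemma~6.3]{MPS22}, a first-order formula $\mathrm{Inj}(\bar x)$ in the variables $x_1,\dots,x_n$ that holds of $(s_1,\dots,s_n)$ and which, whenever it holds of some tuple $(t_1,\dots,t_n)$ in $G$, guarantees that the assignment $s_i\mapsto t_i$ extends to a well-defined \emph{injective} endomorphism of $G$ (this uses the right-angled structure: the $t_i$ must be involutions and satisfy the commutation relations dictated by the Coxeter graph, plus a ping-pong/one-endedness condition ensuring no collapse). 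Then set
$$
\phi(\bar u) \;:=\; \exists x_1\cdots\exists x_n\Bigl(\mathrm{Inj}(\bar x)\;\wedge\;\bar u=\bar b(\bar x)\Bigr).
$$
Clearly $G\models\phi(\bar b)$, hence $G\models\phi(\bar c)$, so there is an injective endomorphism $h$ of $G$ with $h(\bar b)=\bar c$; symmetrically there is an injective endomorphism $f$ with $f(\bar c)=\bar b$.

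**Assembling the automorphism.** Now $f\circ h$ is an injective endomorphism of $G$ fixing the tuple $\bar b$ pointwise. By hypothesis $G$ is one-ended relative to $\langle\bar b\rangle$, and $G$ is hyperbolic (Moussong's criterion places us among the hyperbolic right-angled Coxeter groups), so by the Andr\'e--Moioli fact \cite{And22, Moi13} every monomorphism of $G$ fixing $\bar b$ pointwise is an automorphism; thus $f\circ h\in\mathrm{Aut}(G)$. Since $f\circ h$ is bijective and $h$ is injective, $h$ is surjective, hence $h$ is the desired automorphism carrying $\bar b$ to $\bar c$. This shows $\bar b$ is type-determined. For the ``in particular'' clause: if $G$ is one-ended (absolutely), then it is one-ended relative to every finitely generated subgroup, in particular relative to $\langle\bar b\rangle$ for every tuple $\bar b$; applying the first part to all tuples gives that any two tuples with the same type are in the same $\mathrm{Aut}(G)$-orbit, i.e.\ $G$ is homogeneous.

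**Main obstacle.** The delicate point is the construction and correctness of the formula $\mathrm{Inj}(\bar x)$: one must check that \cite[Lemma~6.3]{MPS22} indeed yields a single first-order formula (not merely a type) asserting that the generator tuple spans a copy of $G$ via a non-collapsing map, and that it is satisfied by the standard generators. Unlike the universal Coxeter case, where the Kurosh subgroup theorem plus a ping-pong lemma (Lemma~\ref{PingPong}) handled injectivity by hand, here one genuinely leans on the right-angled machinery of \cite{MPS22}; once that formula is in place, the rest of the argument is the same formal two-endomorphism composition together with relative co-Hopficity. A minor secondary point is to confirm that the relevant relative one-endedness hypothesis is exactly what \cite{And22} requires, so that the composite $f\circ h$, which only fixes $\bar b$ and not all of $G$, is forced to be surjective.
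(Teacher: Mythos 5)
Your proposal follows essentially the same route the paper intends: the paper proves this proposition by the same two-endomorphism argument as in the universal Coxeter case, with the injectivity of the endomorphism sending $\bar b$ to $\bar c$ guaranteed by the formula of \cite[Lemma~6.3]{MPS22} and the composite handled by the Andr\'e--Moioli relative co-Hopficity fact, exactly as you do. The only cosmetic remark is that ``$f\circ h$ bijective and $h$ injective'' gives $h$ surjective via the injectivity of $f$ (first $f$ is onto, hence an automorphism, hence $h=f^{-1}\circ(f\circ h)$ is one too), a step you have all the hypotheses for.
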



\begin{thebibliography}{30}

\bibitem{Andr22}
S. Andr\'{e}.
\newblock {\em Co-{H}opfian virtually free groups and elementary equivalence}.
\newblock Bull. Lond. Math. Soc., {\bf 54} (2022), no. 6, 2202--2218. 


\bibitem{And22}
S. Andr\'e. 
\newblock {\em Homogeneity in virtually free groups}. 
\newblock Israel J. Math. {\bf 249} (2022), no. 01, 167–225. 

\bibitem{paper_with_simon}
S. Andr\'{e} and Gianluca Paolini.
\newblock {\em  Around first-order rigidity of Coxeter groups}.
\newblock Preprint, available on ArXiv at \url{https://arxiv.org/abs/2407.01164}. 

\bibitem{ALM19}
N. Avni, A. Lubotzky, and C. Meiri.
\newblock {\em First order rigidity of non-uniform higher rank arithmetic groups}.
\newblock Invent. Math. {\bf 217} (2019), no. 01, 219–240. 


\bibitem{BBGK73}
J.T. Baldwin, A. R. Blass, A. M. Glass, and D.W. Kueker.
\newblock {\em A ''natural'' theory without a prime model}.
\newblock Algebra Universalis {\bf 03} (1973), 152-155.

\bibitem{Baumslag74}
G. Baumslag. 
\newblock {\em Residually finite groups with the same finite images}.
\newblock Compositio Math. {\bf 29} (1974), 249–252. 

\bibitem{BMRS20}
M. R. Bridson, D. B. McReynolds, A. W. Reid, and R. Spitler.
\newblock {\em Absolute profinite rigidity and hyperbolic geometry}.
\newblock Ann. of Math. (2) {\bf 192} (2020), no. 03, 679–719. 

\bibitem{brown}
Kenneth S. Brown.
\newblock {\em Buildings}.
\newblock Springer Monogr. Math., Springer-Verlag, New York 1998.


\bibitem{other_paper}
S. M. Corson, S. Hughes, P. M\"oller and Olga Varghese.
\newblock {\em Profinite rigidity of affine Coxeter groups}.
\newblock Preprint, available on ArXiv at \url{https://arxiv.org/abs/2404.14255}.

\newblock {\em Buildings}.
\newblock Springer Monogr. Math., Springer-Verlag, New York 1998.

\bibitem{representation_theory_book}
C. W. Curtis and I. Reiner 
\newblock {\em Representation theory of finite groups and associative algebras}.
\newblock AMS Chelsea Publishing, 1962. 



\bibitem{grune}
F. Grunewald and P. Zalesskii.
\newblock {\em Genus for groups}.
\newblock J. Algebra {\bf 326} (2011), 130-168.





\bibitem{Hirshon77}
R. Hirshon. 
\newblock {\em Some cancellation theorems with applications to nilpotent groups}.
\newblock J. Austral. Math. Soc. Ser. A {\bf 23} (1977), no. 02, 147–165. 

\bibitem{hodges}
W. Hodges.
\newblock {\em Model theory}.
\newblock Encyclopedia of Mathematics and its Applications, vol. 42, Cambridge University Press, Cambridge, 1993. 




\bibitem{humphreys}
J.E. Humphreys.
\newblock {\em Reflection groups and Coxeter groups}.
\newblock Cambridge Studies in Advanced Mathematics, 29. Cambridge University Press, Cambridge, 1990.


\bibitem{KM06}
O. Kharlampovich and A. Myasnikov. 
\newblock {\em Elementary theory of free non-abelian groups}.
\newblock J. Algebra {\bf 302} (2006), no. 2, 451–552. 


\bibitem{MK10}
V. D. Mazurov, E. I. Khukhro (editors).
\newblock{\em Unsolved problems in group theory}.
\newblock The Kourovka Notebook 17, Novosibirsk (2010).

\bibitem{Moi13}
Christophe Moioli. 
\newblock {\em Graphes de groupes et groupes co-hopfiens}. 
\newblock PhD thesis, 2013.

\bibitem{moussong}
G. Moussong.
\newblock {\em Hyperbolic Coxeter groups}.
\newblock PhD thesis – The Ohio State University, 1988.



\bibitem{maxwell_normal}
G. Maxwell.
\newblock {\em The normal subgroups of finite and affine Coxeter groups}. 
\newblock Proc. Lond. Math. Soc. (3) {\bf 76} (1998), no. 02, 359-382.

\bibitem{MPS22}
B. M\"uhlherr, G. Paolini, and S. Shelah. 
\newblock {\em First-order aspects of Coxeter groups}. 
\newblock J. Algebra {\bf 595} (2022), 297–346.



\bibitem{Nies07}
Andr\'e Nies. 
\newblock {\em Describing groups}.
\newblock Bull. Symbolic Logic {\bf 13} (2007), no. 03, 305–339. 

\bibitem{NS09}
A. Nies and P. Semukhin.
\newblock {\em Finite automata presentable abelian groups}. 
Ann. Pure Appl. Logic {\bf 161} (2009), no. 03, 458–467. 


\bibitem{Oger88}
Francis Oger.
\newblock {\em Elementary equivalence and profinite completions: a characterization of finitely generated abelian-by-finite groups}.
\newblock Proc. Amer. Math. Soc. {\bf 103} (1988), no. 04, 1041–1048. 

\bibitem{Oger91}
Francis Oger.
\newblock {\em Cancellation and elementary equivalence of finitely generated finite-by-nilpotent groups}.
\newblock J. London Math. Soc. (2) {\bf 44} (1991), no. 01, 173–183. 

\bibitem{Oger06}
Francis Oger. 
\newblock {\em Quasi-finitely axiomatizable groups and groups which are prime models}. 
\newblock J. Group Theory {\bf 09} (2006), no. 01, 107–116. 

\bibitem{OS06}
F. Oger and G. Sabbagh. 
\newblock {\em Quasi-finitely axiomatizable nilpotent groups}. 
\newblock J. Group Theory {\bf 09} (2006), no. 01, 95–106. 

\bibitem{OH11}
A. Ould Houcine.
\newblock {\em Homogeneity and prime models in torsion-free hyperbolic groups}.
\newblock Confluentes Math. {\bf 03} (2011), no. 01, 121–155. 

\bibitem{plesken}
D. F. Holt and W. Plesken.
\newblock {\em Perfect Groups}. 
\newblock  Oxford University Press, 1989.


\bibitem{varghese}
P. Möller and O. Varghese.
\newblock {\em On quotients of Coxeter groups}.
\newblock Preprint, available on the ArXiv.

\bibitem{PS12}
C. Perin and R. Sklinos.
\newblock {\em Homogeneity in the free group}.
\newblock Duke Math. J. {\bf 161} (2012), no. 13, 2635–2668.

\bibitem{Pickel74}
P. F. Pickel. 
\newblock{\em Metabelian groups with the same finite quotients}.
\newblock Bull. Austral. Math. Soc. {\bf 11} (1974), 115–120.

\bibitem{crystallo_profinite}
P. Piwek, D. Popovi{\`c} and G. Wilkes.
\newblock{\em Distinguishing crystallographic groups by their finite quotients}.
\newblock J. Algebra {\bf 565} (2021), 548-563.



\bibitem{Sela06}
Z. Sela. 
\newblock {\em Diophantine geometry over groups. VI. The elementary theory of a free group}. 
\newblock Geom. Funct. Anal. {\bf 16} (2006), no. 03, 707–730. 

\bibitem{Sela09}
Z. Sela. 
\newblock {\em Diophantine geometry over groups. VII. The elementary theory of a hyperbolic group}.
\newblock Proc. Lond. Math. Soc. (3) {\bf 99} (2009), no. 01, 217–273. 


\bibitem{Sta71}
J. Stallings, 
\newblock {\em Group theory and three-dimensional manifolds}. Yale Mathematical Monographs, vol. 4, Yale University Press, New Haven, Conn.-London, 1971, A James K. Whittemore Lecture in Mathematics given at Yale University, 1969. 


\end{thebibliography}
\end{document}